\documentclass[a4paper, 10pt, twoside, notitlepage]{amsart}

\usepackage{amsmath,amscd}
\usepackage{amssymb}
\usepackage{amsthm}
\usepackage{comment}
\usepackage{graphicx, xcolor}

\usepackage{mathrsfs}
\usepackage[ocgcolorlinks, linkcolor=blue]{hyperref}

\usepackage{bm}
\usepackage{bbm}
\usepackage{url}

\usepackage[utf8]{inputenc}
\usepackage{mathtools,amssymb}
\usepackage{esint}
\usepackage{tikz}
\usepackage{dsfont}
\usepackage{relsize}
\usepackage{url}
\usepackage{xcolor}
\usepackage{graphicx}
\usepackage{mathrsfs}
\usepackage[shortlabels]{enumitem}
\usepackage{lineno}
\usepackage{amsmath}
\usepackage{enumitem}
\usepackage{amsthm} 
\usepackage{verbatim}
\usepackage{dsfont}
\numberwithin{equation}{section}

\allowdisplaybreaks

\mathtoolsset{showonlyrefs}

\graphicspath{{images/}}

\newtheorem{theorem}{Theorem}[section]
\newtheorem{lemma}[theorem]{Lemma}
\newtheorem{proposition}[theorem]{Proposition}
\newtheorem{corollary}[theorem]{Corollary}

\newtheorem{remark}[theorem]{Remark}

\newcommand{\R}{\mathbb R}
\newcommand{\N}{\mathbb N}

\newcommand{\Hs}{H^s(\mathbb R^n)}
\newcommand{\Htil}{\widetilde H^s}
\newcommand{\Bq}{\mathcal B_q}
\newcommand{\K}{\mathcal K}
\newcommand{\supp}{\operatorname{supp}}
\newcommand{\qe}{\text{q.e.}}
\newcommand{\Capa}{\operatorname{Cap}}
\newcommand{\eps}{\varepsilon}
\newcommand{\Lip}{\operatorname{Lip}}

\title[Inverse fractional Obstacle Problem]{An Inverse Obstacle Problem for the Fractional Schrödinger Equation}

\author[G. Uhlmann]{Gunther Uhlmann}
\address{Department of Mathematics, University of Washington, Seattle, WA 98195-4350, USA}
\email{gunther@math.washington.edu}

\author[P. Zimmermann]{Philipp Zimmermann}
\address{Departament de Matem\`atiques i Inform\`atica, Universitat de Barcelona, Barcelona, Spain and Institute of Mathematics, École Polytechnique Fédérale de Lausanne (EPFL), Lausanne, Switzerland}
\email{philipp.zimmermann@epfl.ch}

\begin{document}

		\begin{abstract}
We study an inverse obstacle problem for the fractional Schrödinger operator
$(-\Delta)^s+q$, $0<s<1$.  For each exterior datum, the state is constrained
by a prescribed obstacle in a bounded domain and satisfies the fractional
Schrödinger equation only in the associated noncontact set.  This set is
unknown and depends on the coefficient, so the exterior
Dirichlet-to-Neumann map is nonlinear.  We show that the nonlocal character
of the equation gives a direct way around this moving-free-boundary
difficulty.  Equality of one obstacle measurement on an exterior open set
forces equality of the two corresponding obstacle states in the whole space.
In their common noncontact set one then obtains
$(q_1-q_2)u=0$.  This identity yields recovery of the potential in the exposed
region by measurable unique continuation when $s\in[1/4,1)$, and by the usual
unique continuation principle together with continuity when the potentials
are continuous.  We also prove a geometric coverage theorem for nonnegative
potentials: rational positive scalings of one nontrivial nonnegative exterior
datum expose the whole domain up to a null set.  Consequently, under the
corresponding assumptions, a countable family of nonlinear exterior obstacle
measurements determines the potential globally.
		\medskip
		
		\noindent{\bf Keywords.} Fractional Schrödinger equation, obstacle problem, inverse problem,
unique continuation principle.
		
		\noindent{\bf Mathematics Subject Classification (2020)}: Primary 35R30; secondary 35J62, 35J70 

		\end{abstract}
		
		\maketitle

	\tableofcontents

\section{Introduction}

Inverse problems for nonlocal equations exhibit a phenomenon which has no
direct local analogue: information measured on an exterior open set may
determine the solution throughout the domain.  For the fractional Schrödinger
operator
\begin{equation}
\label{eq: fractional Schrödinger operator}
  L_q \vcentcolon =(-\Delta)^s+q,
  \qquad 0<s<1,
\end{equation}
this exterior unique continuation mechanism is the basis of the fractional
Calderón problem \cite{GSU-fractional}.  Here $(-\Delta)^s$ denotes the
fractional Laplacian, which may be written as the singular integral
\begin{equation}
\label{eq: singular integral frac Lap}
    (-\Delta)^s u(x)
     \vcentcolon =
    c_{n,s}\operatorname{p.v.}
    \int_{\mathbb R^n}
    \frac{u(x)-u(y)}{|x-y|^{n+2s}}\,dy,
\end{equation}
where $c_{n,s}>0$ is chosen so that the Fourier multiplier is
$|\xi|^{2s}$.  The strong unique continuation and Runge approximation
properties in this setting lead to inverse results which are, in several
respects, more flexible than their local counterparts
\cite{calderon2006inverse}. The interested reader may consult the following articles \cite{GSU20,GU2021calder,RZ-unbounded,ghosh2021non,feizmohammadi2021fractional_closed,feizmohammadi2024calder,FKU24,RZ2022LowReg,zimmermann2023inverse,KLW2022,zimmermann2024optimalrungeapproximationdamped,cekic2020calderon,fu2026calderon,GRSU18,RS17,ruland2018exponential,ruland2021single,zimmermann2024-viscous-wave} and the references therein. The purpose of this paper is to show that the
same nonlocal mechanism is also effective for inverse obstacle problems.

Let $\Omega\subset\R^n$ be bounded and open, and let
$\Omega_e=\R^n\setminus\overline\Omega$.  Given an exterior datum
$f\in H^s(\R^n)$ and an obstacle $\psi$, the fractional obstacle state $u$ is
characterized by
\[
  u-f\in \widetilde H^s(\Omega),\qquad
  u\ge \psi \quad\text{in }\Omega
\]
in the quasieverywhere sense, together with the variational inequality
\[
  \mathcal B_q(u,v-u)\ge0
  \qquad\text{for all admissible }v,
\]
where $\Bq$ is the natural bilinear form associated with the fractional Schrödinger operator. Here quasieverywhere means outside a set of $H^s$-capacity zero; the precise
capacitary convention is recalled in Section \ref{sec: obstacle problem}.  In the noncontact set
$D=\{u>\psi\}$ one has $L_qu=0$ in the weak sense, while on
$\Omega\setminus D$ the solution is constrained by the obstacle.  The
exterior measurement considered here is the nonlinear partial
Dirichlet-to-Neumann map
\[
  \Lambda_{q,\psi}(f)|_W \vcentcolon = (-\Delta)^s u|_W,
\]
where $W\subset\Omega_e$ is a nonempty open set.  The inverse problem is to
determine the potential $q$ from such measurements.

This problem is not merely a formal variant of the fractional Calderón
problem.  The obstacle constraint introduces an unknown active set, and hence
the equation is not imposed on a fixed region.  Even for a known obstacle, the
noncontact set can change with the exterior datum and with the potential
itself.  Thus one is trying to recover a coefficient in a region whose
location is also part of the unknown response of the system.

There are several reasons why this inverse problem is natural.  First,
fractional obstacle problems are genuine free-boundary models.  The classical
case $s=1$ describes a solution which is harmonic away from the contact set,
with a free boundary separating the coincidence and noncoincidence regions.
The fractional problem retains this complementarity structure, but the
operator interacts with the whole space.  As emphasized in Silvestre's thesis
and its published version \cite{Silvestre-thesis,Silvestre-obstacle}, the
expected optimal regularity changes from $C^{1,1}$ in the local problem to
$C^{1,s}$ for the fractional Laplacian.  The order of the operator is
therefore visible in the regularity theory at the free boundary.

A second motivation comes from the thin obstacle, or Signorini, problem.  If a
harmonic function in a half-space is prescribed by its trace on the boundary,
then the map from the trace to the normal derivative is $(-\Delta)^{1/2}$.
Consequently a Signorini constraint on the boundary can be reformulated as an
obstacle problem for the half-Laplacian on the boundary itself
\cite[Section 1.2]{Silvestre-thesis}.  Fractional obstacle problems therefore
encode certain local free-boundary problems in one higher dimension, but in a
compressed nonlocal form.  From the inverse point of view, exterior data for
the fractional equation play the role of accessible nonlocal boundary
information, while the free boundary remains hidden inside the domain.

A third motivation is probabilistic.  Up to the usual sign convention,
$(-\Delta)^s$ is the generator of a symmetric stable jump process, and
obstacle problems for $(-\Delta)^s$ arise from optimal stopping.  In
Silvestre's formulation, the value function
\[
  u(x)=\sup_\tau \mathbb E_x[\psi(X_\tau)]
\]
solves a fractional obstacle problem.  Adding a discount factor leads to an
equation of the form $\lambda u+(-\Delta)^s u=0$ away from the stopping
region \cite[Section 1.4]{Silvestre-thesis}.  In financial language, this is
the free-boundary problem behind perpetual American options driven by jump
processes.  The operator $(-\Delta)^s+q$ is a spatially inhomogeneous version
of the same structure: the potential $q$ can be interpreted as a killing or
discounting rate, or more generally as an unknown medium parameter.  The
inverse question asks whether this hidden parameter can be recovered from
exterior observations even though the stopping region is not known.

The local analogue suggests a geometric linearized strategy.  One works near
background solutions with regular free boundary.  Boundary perturbations then
solve a linear Dirichlet problem in the noncontact set with zero data on the
free boundary, while obstacle perturbations prescribe data on the free
boundary or contact set.  A Green identity can identify interior boundary
information, and a Calderón or Runge argument can then recover the coefficient
in the region exposed by the background solution.  This route is natural, but
it requires substantial additional input: free-boundary regularity,
differentiability of the variational inequality, and a nondegeneracy
condition strong enough to turn the critical cone into an effective
zero-trace space on the contact set.

The fractional Schrödinger equation allows a different first step.  Suppose
that two potentials $q_1,q_2$ give the same obstacle measurement for the same
exterior datum $f$ on an exterior open set $W$.  If $u_1,u_2$ are the
corresponding obstacle states, then $u_1-u_2=0$ in $\Omega_e$, and equality of
the measured data gives
\[
  (-\Delta)^s(u_1-u_2)=0\quad\text{in }W.
\]
The exterior antilocality theorem of Ghosh, Salo, and Uhlmann
\cite{GSU-fractional} therefore implies $u_1=u_2$ in all of
$\mathbb R^n$.  Thus one nonlinear measurement determines the whole obstacle
state.  In the common noncontact set $D=\{u_1>\psi\}=\{u_2>\psi\}$ the two
equations may be subtracted, and one obtains
\[
  (q_1-q_2)u=0.
\]
This identity is the basic recovery mechanism of the paper.

To convert this identity into recovery of the coefficient, one must know that
the common state does not vanish too much in the exposed region.  We use two
unique continuation inputs.  For rough $L^\infty$ potentials, the
measurable unique continuation theorem appearing in the single
measurement fractional Calderón reconstruction of Ghosh, Rüland, Salo, and
Uhlmann \cite{GRSU-single} applies in the range $s\in[1/4,1)$.  If such a
measurable UCP theorem is available for another potential class, the same
argument applies.  For continuous potentials, one can instead use open-set
unique continuation together with continuity: if two continuous potentials
differ at a point of the open noncontact set, then they differ on a smaller
open set, so the identity above forces the state to vanish there, and
exterior unique continuation gives a contradiction unless the state is
trivial.  This proves the single-state recovery result, Theorem
\ref{thm:one-state}: one obstacle measurement recovers the potential in the
region exposed by the measured state.

It remains to expose enough of the domain.  We prove that, for nonnegative
potentials, a countable scaled family of exterior data is enough.  Let
$0\le f_0\in C_c^\infty(W_1)$ be nontrivial, and let $h$ be the unconstrained
solution with exterior value $f_0$.  The weak maximum principle and the
strong maximum principle for nonlocal operators \cite{Jarohs-Weth-SMP} imply
that $h>0$ a.e. in $\Omega$.  If $u_t$ denotes the obstacle state with
exterior datum $t f_0$, comparison gives
\[
  u_t\ge t h \quad\text{in }\Omega.
\]
Now suppose that a set of positive measure were never exposed by the states
$u_t$, $t\in\mathbb Q_+$.  On that set one would have $u_t\le\psi$ for every
rational $t>0$, while the comparison inequality gives $t h\le\psi$.  Since
$\psi$ is finite a.e., letting $t\to\infty$ through rational values would
force $h=0$ on a positive-measure set, contradicting strong positivity.  This
is Proposition \ref{prop:coverage}.

Combining the one-state recovery theorem with this coverage result gives the
main uniqueness theorem, Theorem \ref{thm:global}.  In a representative form,
let $W_1,W_2\subset\Omega_e$ be nonempty open sets, let
$0\le f_0\in C_c^\infty(W_1)$ be nontrivial, and let
$\psi\in C_c^\infty(\Omega)$.  If $q_1,q_2\in L^\infty(\Omega)$ are
nonnegative and either $s\in[1/4,1)$ or both potentials are continuous, then
\[
  \Lambda_{q_1,\psi}(t f_0)|_{W_2}
  =
  \Lambda_{q_2,\psi}(t f_0)|_{W_2}
  \qquad \forall t\in\mathbb Q_+
\]
implies $q_1=q_2$ a.e. in $\Omega$.  Thus the nonlinear obstacle
Dirichlet-to-Neumann map is not needed on an open set of boundary amplitudes;
a countable one-parameter family suffices.  The countability is useful
because the coverage argument only discards null sets and then intersects
over the measured family.

Some care is required in formulating the obstacle problem at this level of
generality.  The obstacle constraint is imposed quasieverywhere with respect
to the $H^s$ capacity, the natural noncontact set is quasiopen before
additional regularity is imposed, and the equation in the noncontact set must
be justified by approximation from capacitary sublevel sets.  Section \ref{sec: obstacle problem}
therefore records the analytic foundations explicitly: closedness of the
admissible set, well-posedness, complementarity, localization of the equation
to the noncontact set, the nonlinear exterior DN map, and a
Lewy--Stampacchia-based route to continuity and openness of $\{u>\psi\}$ under
smooth obstacle and exterior-data hypotheses.  This is the point at which the
smoothness assumptions in the main theorem enter: they ensure that the
noncontact set is genuinely open, so that the recovery identity can be read
distributionally on open subsets.

Although the main proof does not require linearizing the obstacle map, we end
with a short remark explaining how the classical free-boundary strategy
changes in the nonlocal setting.  Under additional Mignot-type
differentiability and strict-complementarity hypotheses, exterior
perturbations solve the fractional Schrödinger equation in the exposed
quasiopen set with homogeneous capacitary data on the contact set, while
obstacle perturbations prescribe data on the entire contact set, not merely
on the geometric free boundary.  This conditional linearized picture is
conceptually useful, but it is not used in the proof of the main uniqueness
theorem.

The paper is organized as follows.  Section \ref{sec: obstacle problem} develops the direct obstacle
problem and the nonlinear DN map.  Section~\ref{sec: UCP} states the unique continuation
theorems used later.  Section~\ref{sec: Single state} proves recovery from one obstacle state.
Section~\ref{sec: geometric coverage} proves the comparison, strong-positivity, coverage, and global
uniqueness results.  We conclude with a brief remark on the conditional
linearized analogue of the local inverse obstacle strategy and on how
nonlocality changes the role of obstacle perturbations.

\section{The obstacle problem}
\label{sec: obstacle problem}

In the present section we discuss the well-posedness of the obstacle problem for the fractional Schrödinger operator $L_q=(-\Delta)^s+q$.

\subsection{Functional-analytic setup}
\label{subsec: functional-analytic setup}

Let $\Omega\subset\R^n$, $n\ge 2$, be bounded and open, and set
$\Omega_e=\R^n\setminus\overline\Omega$.  We use the standard \emph{fractional Sobolev space} $H^s(\mathbb R^n)$, $0<s<1$, with the Gagliardo-- Slobodeckij norm
\[
  \|u\|_{H^s(\R^n)}^2
  \vcentcolon =
  \|u\|_{L^2(\R^n)}^2
  +
  [u]^2_{H^s(\R^n)},
\]
where the seminorm $[\,\cdot\,]_{H^s(\R^n)}$ is defined by
\[
    [u]^2_{H^s(\R^n)}\vcentcolon =\iint_{\R^n\times\R^n}
  \frac{|u(x)-u(y)|^2}{|x-y|^{n+2s}}\,dx\,dy.
\]
Moreover, we let $\dot{H}^s(\R^n)$ stand for the \emph{homogeneous fractional Sobolev space}, which is the completion of $C_c^{\infty}(\R^n)$ with respect to $[\cdot]_{H^s(\R^n)}$. Note that Sobolev's embedding yields $\dot{H}^s(\R^n)\hookrightarrow L^{2^*_s}(\R^n)$, where $2^*_s=2n/(n-2s)$; see, for example, \cite{DNPV-Hitchhiker}.

Throughout the paper, unless explicitly stated otherwise, potentials satisfy
\[
  q\in L^\infty(\Omega),\qquad q\ge0\quad\text{a.e. in }\Omega.
\]
For such $q$ we set
\[
  \Bq(u,v) \vcentcolon =\mathcal E_s(u,v)+\int_\Omega q uv\,dx .
\]
Here, $\mathcal E_s$ is the natural energy related to the fractional Laplacian $(-\Delta)^s$, namely
\[
  \mathcal E_s(u,v)
 \vcentcolon =
  \frac{c_{n,s}}2
  \iint_{\R^n\times\R^n}
  \frac{(u(x)-u(y))(v(x)-v(y))}{|x-y|^{n+2s}}\,dx\,dy,
\]
where $c_{n,s}>0$ is the constant from \eqref{eq: singular integral frac Lap}. Indeed, for all $u,v\in C_c^{\infty}(\Omega)$, one has
\[
  \mathcal E_s(u,v)
  =
  c_{n,s}\int_{\Omega}v(x)\,
  \operatorname{p.v.}\!\int_{\R^n}
  \frac{u(x)-u(y)}{|x-y|^{n+2s}}\,dy\,dx
  =
  \int_{\Omega}v(-\Delta)^s u\,dx.
\]
By the fractional Poincar\'e inequality on open bounded sets, one easily sees that $\mathcal E_s(u,u)$ induces an equivalent norm on
\begin{equation}
\label{eq: tilde space 1}
  \Htil(\Omega)\vcentcolon =\overline{C_c^\infty(\Omega)}^{H^s(\R^n)};
\end{equation}
see \cite{fernandez2024integro} or \cite{RZ-unbounded}. We recall that this inequality asserts the existence of a constant $C=C(n,s,\Omega)>0$ such that
\begin{equation}
\label{eq: fractional Poincaré}
    \|u\|_{L^2(\Omega)}\leq C[u]_{H^s(\R^n)},\quad u\in C_c^{\infty}(\Omega).
\end{equation}
In particular, $\Bq$ is continuous on $H^s(\mathbb R^n)\times H^s(\mathbb R^n)$,
and the positivity of $q$, together with \eqref{eq: fractional Poincaré}, gives coercivity on $\Htil(\Omega)$:
\begin{equation}
\label{eq: coercivity estimate}
  \Bq(\varphi,\varphi)\ge c\|\varphi\|_{H^s(\R^n)}^2
  \qquad \varphi\in\Htil(\Omega)
\end{equation}
for some $c>0$.

\subsection{Capacity and quasicontinuous representatives}

We recall the capacitary convention used in the obstacle constraint.  We use
the \emph{Choquet capacity} associated with the Sobolev space $H^s(\mathbb R^n)$.
First, for a compact set $K\subset\mathbb R^n$, set
\[
  \Capa_s(K)
  \vcentcolon =
  \inf\bigl\{\|v\|_{H^s(\mathbb R^n)}^2:\ v\in\mathcal S(\mathbb R^n),\
  v\ge1\ \text{on }K\bigr\},
\]
where $\mathcal S(\R^n)$ denotes the space of Schwartz functions. 

For an open set $G\subset\mathbb R^n$, set
\[
  \Capa_s(G)
  \vcentcolon =
  \sup\{\Capa_s(K):\ K\subset G,\ K\text{ compact}\},
\]
and for an arbitrary set $E\subset\mathbb R^n$, set
\[
  \Capa_s(E)
  \vcentcolon =
  \inf\{\Capa_s(G):\ E\subset G,\ G\text{ open}\}.
\]
The infimum is allowed to be $+\infty$. These definitions are completely analogous to $(s,p)$ Bessel capacities $C_{s,p}$ in
\cite[Definition 2.2.6]{Adams-Hedberg}. Indeed, our space $H^s(\R^n)$, equipped with the Gagliardo--Slobodeckij norm $\|\cdot\|_{H^s(\R^n)}$, is isomorphic to the \emph{Bessel potential space}
\begin{equation}
\label{eq: Bessel potential space}
    H^{s,2}(\R^n) \vcentcolon =\{f\in \mathcal S'(\R^n): f=G_s\ast g\text{ with } g\in L^2(\R^n)\},
\end{equation}
which carries the norm 
\begin{equation}
\label{eq: Bessel norm}
    \|f\|_{H^{s,2}(\R^n)}\vcentcolon = \|g\|_{L^2(\R^n)}\quad\text{for }f=G_s\ast g.
\end{equation}
Above, the symbol $G_s$ stands for the Bessel kernel, which is given by
\[
    G_s\vcentcolon =\mathcal{F}^{-1}((1+|\xi|^2)^{-s/2})\in L^1(\R^n).
\]
More explicitly, it may be computed as
\[
    G_s(x)=\gamma_{n,s}|x|^{(n-s)/2}K_{(n-s)/2}(|x|),
\]
where $1/\gamma_{n,s}\vcentcolon = 2^{s/2-1}(2\pi)^{n/2}\Gamma(s/2)$ and $K_\nu$ is called modified Bessel function of the third kind. Indeed, this representation motivated N.~Aronszajn and K.~T.~Smith to call $H^{s,2}$ Bessel potential space. Clearly, the Bessel potential space $H^{s,2}(\R^n)$ extends to any $s\in\R$ and also other integrability exponents $1\leq p<\infty$. 

Therefore the capacity $\Capa_s$ is equivalent to the $(s,2)$ Bessel capacity $C_{s,2}$. More concretely, there exist $\alpha_1,\alpha_2 >0$ such that
\begin{equation}
\label{eq: equivalence to bessel cap}
    \alpha_1\, C_{s,2}(E)\leq \Capa_s(E)\leq \alpha_2 \,C_{s,2}(E)
\end{equation}
for all $E\subset \R^n$. In particular, $\Capa_s$ and $C_{s,2}$ have the same capacity-zero sets. 

$H^s$ capacities, or more generally Bessel capacities $C_{s,p}$, are outer measures \cite[Sections~1.1 \& 4.7]{evans-mt}, but they are not finitely additive on arbitrary disjoint sets. Indeed they have the following properties.
\begin{theorem}[{\cite[Proposition~2.3.6, Theorem~2.3.11, Proposition~2.3.13]{Adams-Hedberg}}]
\label{thm: properties of cap}
Let $0<s<1$ and $n\geq 1$. Then:
    \begin{enumerate}[(i)]
    \item\label{item: empty set} $\Capa_s(\emptyset)=0$;
    \item\label{item: monotony} $E_1\subset E_2$ $\Rightarrow$ $\Capa_s(E_1)\leq \Capa_s(E_2)$;
    \item\label{item: subadditivity} $\Capa_s$ is $\sigma$-subadditive, that is, for any sequence $(E_k)_{k\in\N}$, one has
    \[
        \Capa_s\left(\bigcup_{k\in\N}E_k\right)\leq \sum_{k\in\N}\Capa_s(E_k);
    \]
    \item\label{item: decreasing seq} If $(K_\ell)_{\ell\in\N}$ is a decreasing sequence of compact sets, then 
    \[
        \Capa_s\left(\bigcap_{\ell\in\N}K_\ell \right)=\lim_{\ell\to \infty}\Capa_s(K_\ell);
    \]
    \item\label{item: increasing seq} If $(E_k)_{k\in\N}$ is an increasing sequence of arbitrary sets, then 
    \[
        \Capa_s\left(\bigcup_{k\in\N}E_k\right)=\lim_{k\to\infty}\Capa_s(E_k);
    \]
    \item\label{item: Borel sets} For any Borel set $B$,
    \[
        \Capa_s(B)=\sup_{K\text{ compact}:K\subset B}\Capa_s(K)=\inf_{G\text{ open}:B\subset G}\Capa_s(G);
    \]
\end{enumerate}
\end{theorem}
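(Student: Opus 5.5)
The statement collects standard properties of the capacity $\Capa_s$, and the paper attributes them to \cite{Adams-Hedberg}. My plan is therefore to transfer the corresponding facts for the Bessel capacity $C_{s,2}$. Where equivalence \eqref{eq: equivalence to bessel cap} is not enough, I will redo the arguments directly for $\Capa_s$, since the definition has exactly the Adams--Hedberg form with the Hilbert norm $\|\cdot\|_{H^s(\R^n)}$. Equivalence alone handles only statements that are invariant under multiplicative constants, such as zero sets. Exact identities like \ref{item: decreasing seq} and \ref{item: increasing seq} instead need the structural arguments.

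\textbf{Routine properties.} Properties \ref{item: empty set} and \ref{item: monotony} are immediate from the definitions. For the empty set, the test function $v=0$ is admissible. Monotonicity holds first for compacts and open sets, and then passes to arbitrary sets through the outer infimum.

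\textbf{Subadditivity.} For \ref{item: subadditivity} I will first prove finite subadditivity on compacts. Given admissible $v_1,v_2$ for $K_1,K_2$, I take $\max(v_1,v_2)$, or more precisely a mollified version, since the maximum is not Schwartz. This step requires enlarging the class of test functions to $v\in H^s(\R^n)$ with $v\ge1$ on a neighbourhood of $K$, and proving that this gives the same capacity. I will use the Markov property $[\max(v_1,v_2)]^2\le[v_1]^2+[v_2]^2$, which holds pointwise for the Gagliardo integrand. Countable subadditivity then follows in the usual way. First I pass to open sets, using compact exhaustion together with the fact that a compact subset of $\bigcup G_k$ lies in finitely many $G_k$. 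Then I pass to arbitrary sets, choosing open $G_k\supset E_k$ with $\Capa_s(G_k)\le\Capa_s(E_k)+\eps 2^{-k}$.

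\textbf{Limits along monotone sequences.} For \ref{item: decreasing seq}, given any open $G\supset\bigcap K_\ell$, compactness gives $K_\ell\subset G$ for large $\ell$. I also need outer regularity on compacts, namely $\Capa_s(K)=\inf_{G\supset K}\Capa_s(G)$. To get it, take a test function $v\ge1$ on $K$, consider $(1+\eps)v$, which is $\ge1$ on an open neighbourhood of $K$, and let $\eps\to0$.

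The main obstacle is \ref{item: increasing seq}, which is the genuinely Hilbertian step. For an increasing sequence $E_k$, I choose open sets $G_k\supset E_k$ with near-optimal capacity and capacitary potentials $u_k$, meaning the minimizers of $\|u\|_{H^s}^2$ subject to $u\ge1$ a.e.\ on $G_k$. I then make the sets nested by replacing them with $\bigcup_{j\le k}G_j$, controlling the error through the parallelogram/lattice inequality $\Capa(A\cup B)+\Capa(A\cap B)\le\Capa(A)+\Capa(B)$. This strong subadditivity comes from the minimizers and the Markov property applied to max and min.

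With nested sets, the potentials form a norm-increasing sequence with bounded norms. By uniform convexity of $H^s$ they are Cauchy, and the limit is admissible for the union. This yields $\Capa_s(\bigcup E_k)\le\lim\Capa_s(E_k)$; the reverse inequality is monotonicity.

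\textbf{Borel sets.} Finally, \ref{item: Borel sets} is Choquet's capacitability theorem. Properties \ref{item: monotony}, \ref{item: decreasing seq} and \ref{item: increasing seq} show that $\Capa_s$ is a Choquet capacity, so every analytic set, and in particular every Borel set, is capacitable. The outer equality holds by definition.
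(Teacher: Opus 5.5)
Your proposal is correct, but it takes a genuinely different route from the paper. The paper offers no argument of its own: it cites Adams--Hedberg's results for the Bessel capacity $C_{s,2}$ and appeals to the analogy of the definitions. Adams--Hedberg work in nonlinear $L^p$ potential theory. Capacities are infima of $\|g\|_{L^p}^p$ over nonnegative densities with $G_s*g\ge1$, capacitary extremals are built for arbitrary sets under quasi-everywhere constraints, and (v) follows from uniform convexity of $L^p$ together with q.e.\ convergence of potentials along subsequences. This covers all $1<p<\infty$ at once.

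As you correctly observe, though, the equivalence $\alpha_1C_{s,2}\le\Capa_s\le\alpha_2C_{s,2}$ only transfers statements that are insensitive to constants. Even (iii) would come out only up to the factor $\alpha_2/\alpha_1$. So the citation tacitly presumes that the Adams--Hedberg arguments can be rerun for the Gagliardo norm. Your proof supplies exactly this, in the Dirichlet-form style. The contraction property of $\|\cdot\|_{H^s(\R^n)}^2$ under $\max$ and $\min$ gives subadditivity, and strong subadditivity on open sets. The parallelogram law makes the equilibrium potentials of nested open sets Cauchy. Only Lebesgue-a.e.\ constraints on open sets are ever used, so no quasi-everywhere notion is needed. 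This is convenient, since the paper builds its q.e.\ calculus on this theorem.

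The price is two auxiliary facts that you should state explicitly.

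\textbf{Outer regularity.} The identity $\Capa_s(K)=\inf_{G\supset K}\Capa_s(G)$ is needed already for the three-level definition to be consistent. Hence it is needed for monotonicity between compact and arbitrary sets, not only for (iv).

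\textbf{Variational characterization on open sets.} Whenever you write $\|u_G\|^2=\Capa_s(G)$, you use
\[
  \Capa_s(G)=\min\bigl\{\|u\|_{H^s(\R^n)}^2:\ u\in H^s(\R^n),\ u\ge1\ \text{a.e. on }G\bigr\}
\]
for open $G$. This happens both in the strong subadditivity and in the bound on the nested potentials. The inequality $\le$ follows from your enlarged test class, but $\ge$ needs a separate argument. Take near-optimal Schwartz functions for compacts $K_j\uparrow G$ and extract a weak limit. Then use that each set $\{u:\ u\ge1\ \text{a.e. on }K_m\}$ is convex and strongly closed, hence weakly closed.

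With that lemma added, your argument is complete.
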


Before proceeding, we shall show the following:
\begin{lemma}
\label{lemma: comparison to Lebesgue measure}
    For any $A\subset \R^n$,
    \begin{equation}
    \label{eq: comparision cap and Ln}
    \mathcal{L}^n(A)\leq C\,\Capa_s(A)^{n/(n-2s)}
\end{equation}
where $\mathcal{L}^n$ is the $n$-dimensional Lebesgue measure.
\end{lemma}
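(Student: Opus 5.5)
Since $n\ge 2>2s$, the exponent $2^*_s=2n/(n-2s)$ is finite. The idea is to compare Lebesgue measure with the capacity through the fractional Sobolev embedding $\dot H^s(\R^n)\hookrightarrow L^{2^*_s}(\R^n)$. Note that $\|v\|^2_{L^{2^*_s}}\le C[v]^2_{H^s}\le C\|v\|^2_{H^s}$ for $v\in\mathcal S(\R^n)$, and that $(2^*_s/2)=n/(n-2s)$. So the inequality $\mathcal L^n(A)\le C\Capa_s(A)^{n/(n-2s)}$ is exactly the Sobolev inequality $\mathcal L^n(\{v\ge1\})\le\|v\|_{L^{2^*_s}}^{2^*_s}$ read off at the level of capacities. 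We prove it in three stages, following the definition of $\Capa_s$: compact sets, then open sets, then arbitrary sets.

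\emph{Compact sets.} Let $K$ be compact and let $v\in\mathcal S(\R^n)$ with $v\ge1$ on $K$. By Chebyshev's inequality and the Sobolev embedding,
\[
\mathcal L^n(K)\le\int_{\R^n}|v|^{2^*_s}\,dx\le C\,[v]_{H^s(\R^n)}^{2^*_s}\le C\,\|v\|_{H^s(\R^n)}^{2^*_s}=C\bigl(\|v\|_{H^s(\R^n)}^2\bigr)^{n/(n-2s)}.
\]
Schwartz functions belong to $\dot H^s(\R^n)$, since they can be approximated in the seminorm by truncations $v\chi(\cdot/R)$, so the embedding applies. Taking the infimum over all admissible $v$ gives $\mathcal L^n(K)\le C\Capa_s(K)^{n/(n-2s)}$.

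\emph{Open sets.} Let $G$ be open. Inner regularity of Lebesgue measure gives $\mathcal L^n(G)=\sup_{K\subset G}\mathcal L^n(K)$. By the compact case and monotonicity of $t\mapsto t^{n/(n-2s)}$, each such $\mathcal L^n(K)$ is bounded by $C\Capa_s(K)^{n/(n-2s)}\le C\Capa_s(G)^{n/(n-2s)}$, where the last step uses the definition of $\Capa_s(G)$ as a supremum over compact subsets.

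\emph{Arbitrary sets.} Let $A\subset\R^n$ be arbitrary and interpret $\mathcal L^n(A)$ as outer Lebesgue measure. If $\Capa_s(A)=\infty$ there is nothing to prove. Otherwise, for any open $G\supset A$, monotonicity of outer measure and the open case give
\[
\mathcal L^n(A)\le\mathcal L^n(G)\le C\Capa_s(G)^{n/(n-2s)}.
\]
Taking the infimum over such $G$ and using continuity of $t\mapsto t^{n/(n-2s)}$ yields the claim.

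The only substantive input is the Sobolev embedding in the first step. Its applicability to Schwartz functions, via the density of $C_c^\infty$ in the seminorm, is the one point that needs a brief justification. The remaining steps are pure regularity bookkeeping that mirrors the three-stage definition of $\Capa_s$.
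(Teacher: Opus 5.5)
Your proof is correct and follows the same route as the paper: the compact case via Chebyshev's inequality and the fractional Sobolev embedding, then inner regularity of Lebesgue measure, then outer approximation of an arbitrary set by open supersets. The only cosmetic difference is the middle step. You carry it out for open sets, directly from the definition of $\Capa_s(G)$ as a supremum over compact subsets, whereas the paper does it for Borel sets using monotonicity of the capacity; either version suffices for the final step.
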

\begin{proof}
    First, assume that $K\subset\R^n$ is a compact set and choose $v\in\mathcal S (\R^n)$ such that $v\geq 1$ on $K$. Since $n\geq 2$ and $0<s<1$, the Sobolev embedding $H^s(\R^n)\hookrightarrow L^{\frac{2n}{n-2s}}(\R^n)$ yields
\[
    \mathcal{L}^n(K)^\frac{n-2s}{n}\leq \|v\|^2_{L^{\frac{2n}{n-2s}}(\R^n)}\leq C\|v\|^2_{H^s(\R^n)}.
\]
Thus,
\[
    \mathcal{L}^n(K)^\frac{n-2s}{n}\leq C\Capa_s(K).
\]
By the interior regularity property of the Lebesgue measure, for any Borel set $B$, we have
\[
    \mathcal{L}^n(B)=\sup_{K\text{ compact}: K\subset B}\mathcal{L}^n(K)
\]
and therefore \ref{item: monotony} (or \ref{item: Borel sets}) implies
\begin{equation}
\label{eq: comparision cap and Ln borel}
    \mathcal{L}^n(B)\leq C(\Capa_s(B))^{\frac{n}{n-2s}}.
\end{equation}
Finally, let $E\subset\R^n$ be arbitrary. If $\Capa_s(E)=\infty$, then \eqref{eq: comparision cap and Ln} holds trivially. Otherwise, for fixed $\eps>0$, choose an open set $G\subset \R^n$ with $E\subset G$ and
\[
    \Capa_s(G)<\Capa_s(E)+\eps.
\]
Then \eqref{eq: comparision cap and Ln borel}, \ref{item: monotony} and the properties of the Lebesgue measure give
\[
    \mathcal{L}^n(E)\leq \mathcal{L}^n(G)\leq C(\Capa_s(G))^{\frac{n}{n-2s}}\leq C(\Capa_s(E)+\eps)^{\frac{n}{n-2s}}.
\]
Thus, letting $\eps\to 0$ yields \eqref{eq: comparision cap and Ln} for any $A\subset\R^n$.
\end{proof}

We shall say that a property $P$ holds 
\emph{quasieverywhere}, if it holds
outside a set of $H^s$-capacity zero.  We abbreviate this by saying that $P$ holds ``\emph{q.e.}''. In view of \eqref{eq: comparision cap and Ln}, this notion is a refinement of the statement that a property holds almost everywhere. 

Moreover, we say that a function $f$, defined quasieverywhere in $\R^n$, is \emph{quasicontinuous}, if for each $\eps>0$ there exists an open set $G$ such that
\[
    \Capa_s(G)<\varepsilon
\]
and 
\[
    f|_{\R^n\setminus G}\text{ is continuous.}
\]

\begin{theorem}[{Quasicontinuous representatives, \cite[Theorem~6.2.1]{Adams-Hedberg}}]
\label{thm: quasicontinuous representation}
    Let $n\geq 2$ and $0<s<1$. Then for all $f\in H^s(\R^n)$ the limit
\begin{equation}
\label{eq: quasicontinuous representative}
    \widetilde f(x)\vcentcolon = \lim_{r\to 0}\fint_{B_r(x)}f(y)\,dy
\end{equation}
exists for all $x\in \R^n\setminus E$, where $\Capa_s(E)=0$. Moreover, $\widetilde f$ has the following properties:
\begin{enumerate}[(i)]
    \item\label{item: q convergence} For all $1\leq q\leq 2n/(n-2s)$,
    \[
    \lim_{r\to 0}\fint_{B_r(x)}|f(y)-\widetilde f(x)|^q\,dy=0.
    \]
    \item\label{item: uniform convergence} For any $\eps>0$, there is an open set $G$ such that $\Capa_s(G)<\eps$ and $\widetilde f|_{\R^n\setminus G}$ is continuous.
    \item\label{item: quasicontinuous} $\widetilde f$ is quasicontinuous with $\widetilde f = f$ q.e.~in $\R^n$.
\end{enumerate}
\end{theorem}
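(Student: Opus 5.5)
The statement is quoted from Adams--Hedberg, so the plan is to transfer their Bessel-capacity result to our setting rather than reprove it from scratch. Three facts are already available: the isomorphism $H^s(\R^n)\cong H^{s,2}(\R^n)$ with equivalent norms, the capacity equivalence \eqref{eq: equivalence to bessel cap}, and the fact that $\Capa_s$ and $C_{s,2}$ have the same null sets. Every notion in the statement ("for all $x$ outside a set of capacity zero", "open $G$ with small capacity") is therefore invariant under replacing $\Capa_s$ by $C_{s,2}$, up to the constant $\alpha_2$. So it suffices to prove the statement for $f=G_s\ast g$ with $g\in L^2$ and the capacity $C_{s,2}$.

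\textbf{Step 1: a weak-type capacitary inequality.} For the Bessel potential, define the maximal function $M f(x)=\sup_{r>0}\fint_{B_r(x)}|f|$. I would establish the capacitary weak-type bound
\[
C_{s,2}(\{Mf>\lambda\})\le C\lambda^{-2}\|g\|_{L^2}^2 .
\]
The route is the pointwise estimate $M(G_s\ast |g|)\le C\,G_s\ast|g|$. This holds because $G_s$ is radially decreasing and integrable, so averages of $G_s$ are dominated by $G_s$ itself, up to a tail controlled by $L^1$ decay. The bound then follows from the definition of $C_{s,2}$ as an infimum over potentials $\ge1$.

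\textbf{Step 2: existence of the limit off a null set.} Approximate $g$ in $L^2$ by $g_k\in C_c^\infty$ with $\|g-g_k\|_{L^2}\le 4^{-k}$. Then $f_k=G_s\ast g_k$ is smooth, and its averages converge everywhere. Set
\[
E_k=\{M(f-f_k)>2^{-k}\}.
\]
Step 1 gives $C_{s,2}(E_k)\le C\,4^{-k}$. By $\sigma$-subadditivity, Theorem~\ref{thm: properties of cap}\ref{item: subadditivity}, the set $G_j=\bigcup_{k\ge j}E_k$ has capacity at most $C\,4^{-j}$. Each $E_k$ is open because $M$ of an $L^1_{loc}$ function is lower semicontinuous, so $G_j$ is open too. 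Outside $G_j$, the functions $f_k$ converge uniformly, so their limit is continuous there. Moreover, the averages of $f$ on $B_r(x)$ stay within $2^{-k}$ of those of $f_k$. Hence the limit \eqref{eq: quasicontinuous representative} exists outside $E=\bigcap_j G_j$, which has capacity zero by monotonicity, and it coincides with $\lim_k f_k$.

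\textbf{Step 3: the remaining properties.} Part~\ref{item: uniform convergence} is exactly the continuity of $\widetilde f$ on $\R^n\setminus G_j$ from Step 2. Part~\ref{item: quasicontinuous} follows from that continuity. It also uses that $\widetilde f=f$ a.e.\ by Lebesgue differentiation, together with the standard fact that two quasicontinuous functions equal a.e.\ are equal q.e.

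For \ref{item: q convergence}, run the same argument with the $L^q$-maximal function
\[
M_qh=\sup_{r>0}\Bigl(\fint_{B_r}|h|^q\Bigr)^{1/q},\qquad q\le 2n/(n-2s).
\]
This requires a weak capacitary bound for $M_q(G_s\ast g)$, which I would obtain by combining the Sobolev embedding used in Lemma~\ref{lemma: comparison to Lebesgue measure} at local scales with the scaling of capacity of balls, $C_{s,2}(B_r)\sim r^{n-2s}$ for small $r$.

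\textbf{Main obstacle.} The hardest step is the endpoint capacitary maximal inequality for $M_q$ at $q=2n/(n-2s)$. Here a naive argument loses, and one needs the Hedberg-type estimate, or the Adams--Hedberg capacitary strong-type inequality, which I would cite from \cite{Adams-Hedberg}. The cleanest overall route is to invoke \cite[Theorem~6.2.1]{Adams-Hedberg} for $C_{s,2}$ and transfer it through \eqref{eq: equivalence to bessel cap}.
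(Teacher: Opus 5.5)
Your proposal is correct and, in its final form, matches the paper. The paper gives no independent proof: it cites \cite[Theorem~6.2.1]{Adams-Hedberg}, justified by the isomorphism $H^s(\R^n)\cong H^{s,2}(\R^n)$ and the capacity equivalence \eqref{eq: equivalence to bessel cap}, which is exactly the transfer you describe. One small fix to your sketch: the bound $M(G_s\ast|g|)\le C\,G_s\ast|g|$ fails when the maximal function ranges over all radii, since averages over large balls decay only polynomially while $G_s$ decays exponentially; restrict to radii $0<r\le 1$, which is enough because only $r\to0$ matters.
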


\begin{remark}[Uniqueness of quasicontinuous representatives]
\label{rem: Uniqueness of quasicontinuous representations}
    Recall that \cite[Theorem~6.1.4]{Adams-Hedberg} ensures that two quasicontinuous functions $f_j$, $j=1,2$, which coincide a.e. in $\R^n$, satisfy $f_1(x)=f_2(x)$ for q.e. $x\in\R^n$. In particular, the quasicontinuous representative provided by Theorem~\ref{thm: quasicontinuous representation} is unique up to sets of vanishing $H^s$ capacity.
\end{remark}

\begin{remark}[Fine properties of functions]
\label{rem: fine properties}
    If $f$ is an $H^s$ function and $g$ is a.e. equal to $f$, then $g$ is again an $H^s$ function. Hence, if we want to study pointwise properties of $H^s$ functions, we need to consider quasicontinuous representatives. Consequently, all pointwise inequalities below are imposed on these representatives. 
\end{remark}

\begin{lemma}[Stability of quasicontinuous representatives]
\label{lemma: Stability of quasicontinuous representatives}
    If $u_k\to u$ strongly in $H^s(\mathbb R^n)$, then
a subsequence converges to $u$ quasieverywhere.
\end{lemma}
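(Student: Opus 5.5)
The plan is to combine a capacitary weak-type (Chebyshev-type) estimate with a Borel--Cantelli argument for the capacity, in the spirit of the classical proof that strongly convergent Sobolev sequences converge pointwise off small sets. Throughout, pointwise values mean the quasicontinuous representatives $\widetilde u_k,\widetilde u$ from Theorem~\ref{thm: quasicontinuous representation}. Since $u_k\to u$ in $H^s(\R^n)$, we first pass to a subsequence, still denoted $u_k$, with
\[
  \|u_k-u\|_{H^s(\R^n)}\le 4^{-k}.
\]
Put $w_k\vcentcolon =\widetilde u_k-\widetilde u$. By Remark~\ref{rem: Uniqueness of quasicontinuous representations}, $w_k$ is a quasicontinuous representative of $u_k-u\in H^s(\R^n)$.

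The key step is the capacitary Chebyshev inequality
\begin{equation}
\label{eq: cap chebyshev}
  \Capa_s\bigl(\{|w|>\lambda\}\bigr)\le \frac{C}{\lambda^2}\|w\|_{H^s(\R^n)}^2,
\end{equation}
valid for $w\in H^s(\R^n)$ with quasicontinuous representative and all $\lambda>0$. I would prove it first for $\varphi\in\mathcal S(\R^n)$. The set $\{|\varphi|>\lambda\}$ is open. For any compact $K$ inside it, the function $|\varphi|/\lambda$ is $\ge1$ on $K$ and satisfies $\||\varphi|\|_{H^s}\le\|\varphi\|_{H^s}$, since $||a|-|b||\le|a-b|$. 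However, $|\varphi|$ need not be Schwartz, so it must be mollified to obtain an admissible competitor with norm at most $(1+\eps)\|\varphi\|_{H^s}/\lambda$; the mollification is arranged to be $\ge1$ on $K$ by slightly rescaling with $\lambda'<\lambda$. Alternatively, one can invoke the equivalence \eqref{eq: equivalence to bessel cap} with $C_{s,2}$, for which \eqref{eq: cap chebyshev} is standard in \cite{Adams-Hedberg}. For general $w$, approximate by Schwartz functions $\varphi_j\to w$ in $H^s$ with $\|\varphi_j-w\|_{H^s}\le 4^{-j}$. Then $\varphi_j\to\widetilde w$ q.e. and quasi-uniformly, by applying \eqref{eq: cap chebyshev} for Schwartz differences, subadditivity (Theorem~\ref{thm: properties of cap}\ref{item: subadditivity}), and uniqueness of quasicontinuous representatives. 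Passing to the limit then gives \eqref{eq: cap chebyshev} up to a null-capacity set. This approximation step is where I expect the main technical obstacle, because $\{|\widetilde w|>\lambda\}$ is only quasiopen. I would handle it by enlarging to an open set $G\supset\{|\widetilde w|>\lambda\}$ of controlled capacity, using the quasicontinuity of $\widetilde w$ from Theorem~\ref{thm: quasicontinuous representation}\ref{item: uniform convergence}.

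Granting \eqref{eq: cap chebyshev}, set $E_k\vcentcolon =\{|w_k|>2^{-k}\}$. Then
\[
  \Capa_s(E_k)\le C\,4^{k}\,16^{-k}=C\,4^{-k}.
\]
Define $F_m\vcentcolon =\bigcup_{k\ge m}E_k$. By $\sigma$-subadditivity, $\Capa_s(F_m)\le C\sum_{k\ge m}4^{-k}\to0$. Outside $F_m$ we have $|\widetilde u_k-\widetilde u|\le 2^{-k}$ for all $k\ge m$, so $\widetilde u_k\to\widetilde u$ there, uniformly in fact. Hence convergence fails at most on
\[
  \bigcap_m F_m\cup Z,
\]
where $Z$ is the countable union of the exceptional capacity-zero sets of the representatives. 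By monotonicity, $\Capa_s(\bigcap_mF_m)\le\Capa_s(F_m)\to0$, and $\Capa_s(Z)=0$ by subadditivity. Therefore the subsequence converges to $u$ quasieverywhere.
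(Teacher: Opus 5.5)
Your proof is correct. Its core (a capacitary weak-type estimate for superlevel sets, then Borel--Cantelli along a subsequence with $\|u_k-u\|_{H^s(\R^n)}\le 4^{-k}$) is the mechanism behind the result the paper quotes. The paper, however, never argues inside $\Capa_s$. It uses $H^s(\R^n)\simeq H^{s,2}(\R^n)$ and \eqref{eq: equivalence to bessel cap} to pass to the Bessel capacity $C_{s,2}$, and then cites \cite[Propositions~2.3.8 and 2.3.13]{Adams-Hedberg}.

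In that framework the step you single out as the main difficulty costs nothing. $C_{s,2}(E)$ is defined for arbitrary $E$ as the infimum of $\|g\|_{L^2}^2$ over $g\ge0$ with $G_s*g\ge1$ on $E$. Since $|G_s*g|\le G_s*|g|$ pointwise, the Chebyshev bound follows directly from the definition.

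Your route keeps the Schwartz-function definition of $\Capa_s$ and the Gagliardo norm. So you have to earn the weak-type estimate through $\bigl\||\varphi|\bigr\|_{H^s(\R^n)}\le\|\varphi\|_{H^s(\R^n)}$, mollification, and a density argument. In return, the proof is self-contained. It also identifies the q.e. limit explicitly with the Lebesgue-point representative of Theorem~\ref{thm: quasicontinuous representation}, via Remark~\ref{rem: Uniqueness of quasicontinuous representations} and Lemma~\ref{lemma: comparison to Lebesgue measure}. In the Bessel route the limit is a priori the pointwise potential $G_s*g$, and this identification stays implicit.

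Two simplifications are available.
\begin{enumerate}[(i)]
\item On a compact $K\subset\{|\varphi|>\lambda\}$ one has $\min_K|\varphi|>\lambda$. Uniform convergence $\rho_\eps*|\varphi|\to|\varphi|$ then gives an admissible competitor without any rescaling, and with constant $C=1$.
\item The quasiopen issue is no obstacle, because $\Capa_s$ is monotone and subadditive on arbitrary sets. Set $G_m=\bigcup_{j\ge m}\{|\varphi_{j+1}-\varphi_j|>2^{-j}\}$, which is open with $\Capa_s(G_m)\le C4^{-m}$, and let $N$ be a set of zero capacity off which $\varphi_j\to\widetilde w$. Then
\[
  \{|\widetilde w|>\lambda\}\subset\{|\varphi_m|>\lambda-2^{1-m}\}\cup G_m\cup N .
\]
Letting $m\to\infty$ gives the estimate directly, with no open enlargement via quasicontinuity.
\end{enumerate}
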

\begin{proof}
    Since $H^s(\mathbb R^n)$ and $H^{s,2}(\R^n)$ are isomorphic, \eqref{eq: equivalence to bessel cap} shows that it is enough to show the assertion for the $C_{s,2}$ Bessel capacity. 

    The latter in turn is a consequence of Propositions~2.3.13 and~2.3.8 in \cite{Adams-Hedberg} as well as \eqref{eq: Bessel potential space}--\eqref{eq: Bessel norm}.
\end{proof}

Furthermore, as in \cite[Definition~6.4.8 and Proposition~6.4.9]{Adams-Hedberg}, we say that a set $E\subset\R^n$ is quasiopen if for any $\eps>0$ there is an open set $G\subset \R^n$ with $\Capa_s(G)<\eps$ such that $E\setminus G$ is open in the relative topology of $G^c$. 

\begin{corollary}
\label{cor: quasiopen}
    Let $n\geq 2$ and $0<s<1$. Let $u\in H^s(\R^n)$ and suppose that $\psi$ is quasicontinuous. Then $\{u>\psi\}$ is quasiopen and Lebesgue measurable after changing $u$ on a set of capacity zero.
\end{corollary}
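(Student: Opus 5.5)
The idea is to replace $u$ by its quasicontinuous representative $\widetilde u$ from Theorem~\ref{thm: quasicontinuous representation}. By Remark~\ref{rem: Uniqueness of quasicontinuous representations}, $\widetilde u$ differs from $u$ only on a set of capacity zero. We then show directly that $\{\widetilde u>\psi\}$ is quasiopen, using a single exceptional open set on which both functions fail to be continuous.

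\textbf{Quasiopenness.} Fix $\eps>0$. By Theorem~\ref{thm: quasicontinuous representation}\ref{item: uniform convergence} there is an open $G_1$ with $\Capa_s(G_1)<\eps/2$ such that $\widetilde u|_{\R^n\setminus G_1}$ is continuous. Since $\psi$ is quasicontinuous, there is an open $G_2$ with $\Capa_s(G_2)<\eps/2$ such that $\psi|_{\R^n\setminus G_2}$ is continuous.

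Set $G=G_1\cup G_2$. Then $G$ is open, and subadditivity (Theorem~\ref{thm: properties of cap}\ref{item: subadditivity}) gives $\Capa_s(G)<\eps$. Monotonicity handles the restriction: both $\widetilde u$ and $\psi$ are continuous on the closed set $F=\R^n\setminus G$, as restrictions of continuous functions on the larger sets $\R^n\setminus G_i$.

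There is one technicality. Both functions are only defined quasieverywhere, off null-capacity sets $N_1$ and $N_2$. A capacity-zero set is contained in open sets of arbitrarily small capacity, by the outer definition of $\Capa_s$. So we enlarge $G$ by an open set of capacity $<\eps/2$ covering $N_1\cup N_2$, after first halving the earlier budgets. On $F$, the function $\widetilde u-\psi$ is then an honest real-valued continuous function. Hence
\[
\{\widetilde u>\psi\}\setminus G=\{x\in F:\ (\widetilde u-\psi)(x)>0\}
\]
is relatively open in $F=G^c$, which is exactly the definition of quasiopen.

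\textbf{Measurability.} Take $\eps=1/k$ and let $G_k$ be the corresponding open sets. The set $E=\{\widetilde u>\psi\}$ satisfies
\[
E=\bigcup_k (E\setminus G_k)\ \cup\ \Bigl(E\cap\bigcap_k G_k\Bigr).
\]
Each $E\setminus G_k$ is the intersection of an open set with a closed set, hence Borel. The set $Z=\bigcap_k G_k$ satisfies $\Capa_s(Z)\le 1/k$ for every $k$, so $\Capa_s(Z)=0$. By Lemma~\ref{lemma: comparison to Lebesgue measure}, $\mathcal L^n(Z)=0$, and so $E\cap Z$ is Lebesgue-null and hence measurable by completeness of Lebesgue measure. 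Therefore $E$ is Lebesgue measurable.

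\textbf{Main obstacle.} The only delicate point is the bookkeeping of the exceptional sets. These are the points where $\widetilde u$ or $\psi$ are undefined, together with the discontinuity sets. All of them must be absorbed into a single open set of small capacity, which the outer regularity of $\Capa_s$ and its subadditivity provide.
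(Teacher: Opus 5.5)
Your proof is correct and follows the same route as the paper: you pass to the quasicontinuous representative, take one open set of small capacity off which $\widetilde u-\psi$ is continuous, and obtain measurability from the Borel pieces $E\setminus G_k$ together with an exceptional set that Lemma~\ref{lemma: comparison to Lebesgue measure} makes Lebesgue-null. The paper instead uses $\mathcal L^n(G_k)\to0$ where you use the capacity-zero intersection $\bigcap_k G_k$, and you make explicit the subadditivity step and the handling of undefined points that the paper leaves implicit. One small citation fix: the fact that $\widetilde u=u$ quasieverywhere is Theorem~\ref{thm: quasicontinuous representation}\ref{item: quasicontinuous}, not the uniqueness statement in Remark~\ref{rem: Uniqueness of quasicontinuous representations}.
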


\begin{proof}
    Up to redefining $u$ on a set of capacity zero, we may assume according to Theorem~\ref{thm: quasicontinuous representation} that $u$ is quasicontinuous. Thus, $v\vcentcolon = u-\psi$ is quasicontinuous. Let $\eps>0$. By definition of quasicontinuity, there is an open set $G\subset\R^n$ such that $\Capa_s(G)<\eps$ and $v|_{\R^n\setminus G}$ is continuous. Therefore
    \[
        \{v>0\}\setminus G
        =
        \{x\in\R^n\setminus G:\ v(x)>0\}
    \]
    is open in the relative topology of $\R^n\setminus G$. Hence $\{v>0\}$ is quasiopen.
    
    Moreover, for each $k\in\N$ one can choose such a set $G_k$ with
    $\Capa_s(G_k)<1/k$.  Then $\{v>0\}\setminus G_k$ is relatively open in
    $\R^n\setminus G_k$, and hence Borel, while
    $\{v>0\}$ differs from this Borel set by a subset of $G_k$. Lemma~\ref{lemma: comparison to Lebesgue measure} gives
    $\mathcal L^n(G_k)\to0$, and therefore $\{u>\psi\}=\{v>0\}$ is Lebesgue measurable after changing $u$ on a set of capacity zero.
\end{proof}
 
If $u\colon\R^n\to \R$ is a measurable function, then we denote the positive and negative parts of $u$ as
\[
    u_+\vcentcolon =\max(u,0)\quad\text{and}\quad u_{-}\vcentcolon =\max(-u,0).
\]
The next lemma studies the regularity properties of $u_+$, when $u$ is an $H^s$ function. Moreover, if $L\colon \R^n\to \R^m$ is a Lipschitz map, then we denote by $\Lip(L)$ the best Lipschitz constant of $L$.

\begin{lemma}[Lattice and Markov properties]
\label{lem:lattice}
Let $\Omega\subset\mathbb R^n$ be an open bounded set and $0<s<1$. Let $T\colon \R\to \R$ be a Lipschitz map.
\begin{enumerate}[(a)]
    \item\label{item: lipschitz map homogeneous} For any measurable $u$ with finite Gagliardo seminorm,
    \[
        [T(u)]_{H^s(\R^n)}\leq \Lip(T)\,[u]_{H^s(\R^n)}.
    \]
    In particular, $T(u)$ has finite homogeneous $H^s$ seminorm whenever $u$ does.
    \item\label{item: lipschitz map inhomogeneous} If $T(0)=0$, then for any $u\in H^s(\R^n)$, we have $T(u)\in H^s(\R^n)$ with
    \[
        \|T(u)\|_{H^s(\R^n)}\leq \Lip(T)\,\|u\|_{H^s(\R^n)}.
    \]
    If, in addition, $u\in\Htil(\Omega)$, then $T(u)\in\Htil(\Omega)$.
\end{enumerate}
Especially, if $a\geq 0$, $b\leq 0$, and $u\in H^s(\R^n)$, then $(u-a)_+\in H^s(\R^n)$ and $(u-b)_{-}\in H^s(\R^n)$. Moreover, if the quasicontinuous representative of $(u-a)_+$ (or $(u-b)_{-}$) vanishes q.e.~on $\R^n\setminus \Omega$, then it belongs to $\widetilde H^s(\Omega)$.

Finally, for every $a\in H^s(\R^n)$ one has
\begin{equation}
\label{eq: lattice energy inequalities}
  \mathcal E_s(a,a_+)\ge \mathcal E_s(a_+,a_+),
  \qquad
  \mathcal E_s(a,a_-)\le -\mathcal E_s(a_-,a_-).
\end{equation}
\end{lemma}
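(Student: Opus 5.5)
The plan is to handle the Gagliardo seminorm pointwise, then treat the $L^2$ part and the membership in $\Htil(\Omega)$, and finally get the energy inequalities \eqref{eq: lattice energy inequalities} from an elementary inequality for real numbers.

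\emph{Part \ref{item: lipschitz map homogeneous}.} For all $x,y$ we have $|T(u(x))-T(u(y))|\le \Lip(T)|u(x)-u(y)|$. Squaring, dividing by $|x-y|^{n+2s}$ and integrating over $\R^n\times\R^n$ gives $[T(u)]_{H^s(\R^n)}\le \Lip(T)[u]_{H^s(\R^n)}$ at once.

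\emph{Part \ref{item: lipschitz map inhomogeneous}.} If $T(0)=0$, then $|T(u)|\le \Lip(T)|u|$ pointwise, so $\|T(u)\|_{L^2}\le\Lip(T)\|u\|_{L^2}$. Together with (a) this yields the norm bound.

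For $u\in\Htil(\Omega)$, I would take $\varphi_k\in C_c^\infty(\Omega)$ with $\varphi_k\to u$ in $H^s$. Each $T(\varphi_k)$ is a compactly supported Lipschitz function in $\Omega$, hence lies in $\Htil(\Omega)$ (mollify it). The sequence $T(\varphi_k)$ is bounded in $H^s$ and converges to $T(u)$ in $L^2$. So a subsequence converges weakly in $H^s$ to $T(u)$. Since $\Htil(\Omega)$ is a closed subspace, it is weakly closed, and therefore $T(u)\in\Htil(\Omega)$. This weak-compactness step avoids needing strong continuity of $u\mapsto T(u)$, which can fail for general Lipschitz $T$.

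\emph{The special cases $(u-a)_+$ and $(u-b)_-$.} Here $T(r)=(r-a)_+$ with $a\ge0$, respectively $T(r)=(r-b)_-$ with $b\le0$. Both are $1$-Lipschitz with $T(0)=0$, so (b) gives $H^s$ membership.

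\emph{The claim that q.e.\ vanishing outside $\Omega$ implies membership in $\Htil(\Omega)$.} This is the main obstacle. It asks for the identification
\[
\Htil(\Omega)=\{w\in H^s(\R^n):\ \widetilde w=0 \text{ q.e. on } \R^n\setminus\Omega\},
\]
which is a Havin--Bagby/Hedberg type spectral synthesis theorem. It holds for arbitrary open $\Omega$ when the capacity is $C_{s,2}$; see \cite[Theorem~9.1.3]{Adams-Hedberg} together with \eqref{eq: equivalence to bessel cap}. I would invoke that result rather than reprove it. If a direct argument were wanted, one could first reduce to bounded nonnegative $w$ by truncation, using (b), and then approximate $w$ by $(w-\eps)_+$. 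These have support in $\{\widetilde w\ge\eps\}$, which need not be compact in $\Omega$ without further capacitary arguments. That is exactly why citing Adams--Hedberg is preferable.

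\emph{The energy inequalities.} Write $a=a_+-a_-$. Then
\[
\mathcal E_s(a,a_+)=\mathcal E_s(a_+,a_+)-\mathcal E_s(a_-,a_+).
\]
Pointwise, $(a_-(x)-a_-(y))(a_+(x)-a_+(y))=-a_-(x)a_+(y)-a_-(y)a_+(x)\le0$, because $a_+a_-=0$. Hence $\mathcal E_s(a_-,a_+)\le0$, which gives the first inequality. For the second,
\[
\mathcal E_s(a,a_-)=\mathcal E_s(a_+,a_-)-\mathcal E_s(a_-,a_-)\le-\mathcal E_s(a_-,a_-),
\]
using the same sign. All integrals are absolutely convergent by Cauchy--Schwarz, since $a_\pm\in H^s(\R^n)$ by (b).
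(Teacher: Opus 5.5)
Your proof is correct and follows the paper's argument almost step by step. The shared ingredients are:
\begin{itemize}
\item the pointwise Lipschitz bound for the seminorm, and the $L^2$ bound from $T(0)=0$;
\item approximation by $C_c^\infty(\Omega)$ functions, whose images under $T$ are compactly supported Lipschitz functions;
\item the capacitary (spectral synthesis) characterization of $\Htil(\Omega)$ for the q.e.\ statement;
\item a pointwise inequality for \eqref{eq: lattice energy inequalities}. Your cross-term sign $-a_-(x)a_+(y)-a_-(y)a_+(x)\le0$ is exactly what underlies the paper's $(\alpha-\beta)(\alpha_+-\beta_+)\ge|\alpha_+-\beta_+|^2$.
\end{itemize}

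The one real difference is in (b). The paper asserts, without proof, that $T(u_j)\to T(u)$ strongly in $H^s(\R^n)$ and then mollifies. You use only boundedness in $H^s$, convergence in $L^2$, and weak closedness of $\Htil(\Omega)$. That is more economical and fully rigorous.

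Your stated reason for avoiding strong continuity is wrong, though. For $0<s<1$ the map $u\mapsto T(u)$ is strongly continuous on $H^s(\R^n)$. Along an a.e.\ convergent subsequence, the quotients $(T(u_j(x))-T(u_j(y)))|x-y|^{-n/2-s}$ converge a.e. They are dominated by $\Lip(T)|u_j(x)-u_j(y)||x-y|^{-n/2-s}$, which converge in $L^2(\R^{2n})$, so generalized dominated convergence applies. This is the short argument the paper leaves implicit.

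Finally, check your citation for the q.e.\ characterization. For fractional $s$ the synthesis result you need is Netrusov's \cite[Theorem~10.1.1]{Adams-Hedberg}, which is what the paper cites. Theorem~9.1.3 there is, to my recollection, Hedberg's integer-order result for $W^{m,p}$.
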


\begin{proof}
 The estimate in \ref{item: lipschitz map homogeneous} is a direct consequence of
\[
  |T(u(x))-T(u(y))|
  \le \Lip(T)|u(x)-u(y)|.
\]
If $T(0)=0$, then also $\|T(u)\|_{L^2(\R^n)}\le \Lip(T)\|u\|_{L^2(\R^n)}$, which gives \ref{item: lipschitz map inhomogeneous}. If $u\in\Htil(\Omega)$, choose
$u_j\in C_c^\infty(\Omega)$ with $u_j\to u$ in $H^s(\mathbb R^n)$.  Then
$T(u_j)\to T(u)$ in $H^s(\mathbb R^n)$, and each $T(u_j)$ has compact support
in $\Omega$. Since $T$ is Lipschitz with $T(0)=0$ the chain rule in Sobolev spaces shows that $T(u_j)\in H^1(\R^n)\subset H^s(\R^n)$. Indeed, $\nabla T(u_j)=T'(u_j)\nabla u_j$ a.e.~in $\R^n$. Thus, by mollification, the assertion follows.

Let $T^{\pm}_a\colon \R\to\R$, $a\in\mathbb R$, be defined by
\[
    T^{\pm}_a(t)\vcentcolon = (t-a)_{\pm},\quad t\in\R.
\]
Then \ref{item: lipschitz map homogeneous} gives the homogeneous seminorm bound for $T_a^\pm(u)$. Moreover, if $a\ge0$, then $(u-a)_+\le u_+\le |u|\in L^2(\R^n)$, and if $a\le0$,
then $(u-a)_{-}\le u^-\le |u|\in L^2(\R^n)$.

The final assertion is a consequence of the capacitary characterization of $\Htil(\Omega)$ as
the subspace of $H^s(\mathbb R^n)$. Indeed, \cite[Theorem~10.1.1]{Adams-Hedberg} (with $E=\R^n\setminus\Omega$, $p=2$, $\alpha=s$) asserts that if $v\in H^s(\R^n)$ then $v\in \widetilde H^s(\Omega)$ if and only if $v=0$ q.e.~in $\R^n\setminus \Omega$. 

It remains to show \eqref{eq: lattice energy inequalities}. For real numbers
$\alpha,\beta$,
\[
  (\alpha-\beta)(\alpha_+-\beta_+)\ge |\alpha_+-\beta_+|^2,
  \qquad
  (\alpha-\beta)(\alpha_- -\beta_-)\le -|\alpha_- -\beta_-|^2.
\]
Applying these pointwise inequalities with $\alpha=a(x)$ and $\beta=a(y)$
and integrating against the fractional kernel gives \eqref{eq: lattice energy inequalities}.
\end{proof}

Next, we discuss a capacitary exhaustion property. To this end, we introduce the following terminology. If $E\subset\R^n$ is any subset of $\R^n$, not necessarily open, then we set
\begin{equation}
\label{eq: tilde space}
    \widetilde H^s(E)\vcentcolon = \overline{\{u\in H^s(\R^n):\supp u\subset E\}}^{H^s(\R^n)};
\end{equation}
see, for example, \cite[Definition~11.5.1]{Adams-Hedberg}. Note that by Theorem~\ref{thm: quasicontinuous representation} the condition $\supp u\subset E$ is well-defined. As shown in \cite[Section~11.5]{Adams-Hedberg}, the spaces \eqref{eq: tilde space} with $E=\Omega$ coincide with $\widetilde H^s(\Omega)$ as defined in \eqref{eq: tilde space 1}.

\begin{lemma}[Capacitary exhaustion property]
\label{lemma: exhaustion property}
   Let $\Omega\subset\mathbb R^n$ be an open bounded set and $0<s<1$. Let $u\in H^s(\R^n)$, $\psi$ be quasicontinuous, and set
   \[
    D \vcentcolon =\{u>\psi\}\quad\text{and}\quad D_m \vcentcolon =\{u-\psi>1/m\}. 
   \]
   Then
\[
  \Htil(D)
  =
  \overline{\bigcup_{m=1}^\infty \Htil(D_m)}^{H^s(\mathbb R^n)}.
\]
Moreover the approximating functions may be chosen bounded.
\end{lemma}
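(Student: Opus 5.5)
The inclusion ``$\supseteq$'' is immediate: $D_m\subset D$, so $\Htil(D_m)\subset\Htil(D)$ by definition \eqref{eq: tilde space}, and $\Htil(D)$ is closed. For ``$\subseteq$'', I would work with the capacitary characterization behind \eqref{eq: tilde space}. By \cite[Section~11.5]{Adams-Hedberg} (the analogue of \cite[Theorem~10.1.1]{Adams-Hedberg} used in Lemma~\ref{lem:lattice}), $v\in H^s(\R^n)$ belongs to $\Htil(E)$ for a quasiopen $E$ precisely when its quasicontinuous representative vanishes q.e.\ on $\R^n\setminus E$. Corollary~\ref{cor: quasiopen} makes $D$ and each $D_m$ quasiopen, since $u-\psi-1/m$ is quasicontinuous. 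So it suffices to approximate a given $v\in\Htil(D)$, with $\widetilde v=0$ q.e.\ off $D$, by functions vanishing q.e.\ off some $D_m$.

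First I reduce to bounded, nonnegative $v$. Splitting $v=v_+-v_-$ and truncating with $T_k(t)=\max(-k,\min(t,k))$ via Lemma~\ref{lem:lattice}, we get $T_k(v)\to v$ in $H^s$ by dominated convergence. The $L^2$ part is clear. For the Gagliardo part, the integrand is dominated by $|v(x)-v(y)|^2|x-y|^{-n-2s}$ and converges pointwise. Truncations preserve vanishing q.e.\ off $D$, so we may take $0\le v\le k$.

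The key construction uses the Lipschitz cutoff $w_m\vcentcolon=\min\bigl(1,m(u-\psi-1/m)_+\bigr)$, which vanishes on $\{u-\psi\le 1/m\}$, and the approximants
\[
  v_m\vcentcolon=\min\bigl(v,\,k\,w_m\bigr)\quad\text{or, better,}\quad v_m\vcentcolon=\bigl(v-k(1-m\,(u-\psi)_+)_+\bigr)_+ .
\]
The second choice avoids products. It is a Lipschitz combination of $v$ and $u-\psi$ applied pointwise, it is dominated by $v$, and it vanishes wherever $u-\psi\le 1/m$, that is, off $D_m$. Since $v_m\le v$, it also vanishes q.e.\ off $D$. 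The bound $v_m\le v\le k$ supplies the ``bounded'' clause.

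The main obstacle is membership in $H^s$. Since $\psi$ is only quasicontinuous and not in $H^s$, the function $u-\psi$ need not have finite Gagliardo seminorm. To handle this I would first replace $\psi$ by an $H^s$ function, or assume, as in the paper's later applications, that $\psi\in H^s$ locally, e.g.\ $\psi\in C_c^\infty$. Failing that, I would argue that only the quasi-topological structure matters. In that case one can use the equivalent statement that $\Htil$ of an increasing union of quasiopen sets $D=\bigcup_m D_m$ (up to capacity zero) is the closure of the union of the $\Htil(D_m)$. I would prove this by the $L^2$/energy projection argument: let $P_m$ be the orthogonal projection of $H^s$ onto $\Htil(D_m)$. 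The $P_mv$ increase to some limit $P v$, and the defect $v-Pv$ is orthogonal to every $\Htil(D_m)$. The relevant fact is \cite[Theorem~6.4.10-type fine continuity]{Adams-Hedberg}: an element of $\Htil(D)$ orthogonal to all $\Htil(D_m)$ yields a measure supported q.e.\ off $D$ that nevertheless charges $D$, hence vanishes.

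With $\psi$ regular enough, convergence $v_m\to v$ is direct. Pointwise $v_m\to v$ q.e., because on $D$ eventually $m(u-\psi)\ge 1$, and off $D$ both vanish. The Gagliardo integrands are dominated by a fixed $L^1$ function of the form $C(|v(x)-v(y)|^2+k^2m^2|(u-\psi)(x)-(u-\psi)(y)|^2)|x-y|^{-n-2s}$. The factor $m^2$ spoils uniform domination, so I would instead show that $v_m$ is bounded in $H^s$ and converges weakly, then apply Mazur's lemma to obtain strong convergence by convex combinations. Each convex combination still lies in some $\Htil(D_M)$, since the $D_m$ increase. Establishing that uniform $H^s$ bound is the step I expect to be hardest.
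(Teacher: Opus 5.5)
You have a genuine gap in the construction of the approximants. Both $w_m=\min\bigl(1,m(u-\psi-1/m)_+\bigr)$ and your second formula are Lipschitz functions of $u-\psi$. The lemma, however, only assumes $\psi$ quasicontinuous, so $u-\psi$ has no Sobolev regularity at all. For a continuous obstacle of Weierstrass type, $w_m$ can fail to have finite Gagliardo seminorm on the transition layers $\{1/m<u-\psi<2/m\}$. Then $\min(v,kw_m)$, which equals $kw_m$ where $kw_m<v$, need not lie in $\Hs$.

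Assuming $\psi\in C_c^\infty$ proves a different lemma, and the lemma is later used (Lemma~\ref{lem:multiplier}) for merely quasicontinuous obstacles. The projection fallback is circular. That the orthogonal complement of $\bigcup_m\Htil(D_m)$ inside $\Htil(D)$ is trivial is exactly the density claim. Moreover, an element orthogonal to these spaces only gives a functional in $H^{-s}(\R^n)$ with no sign, not a measure, so there is nothing for a fine-continuity theorem to act on.

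Even for smooth $\psi$ your route stops short. The obvious bound $[\min(v,kw_m)]_{H^s(\R^n)}\le[v]_{H^s(\R^n)}+km[u-\psi]_{H^s(\R^n)}$ grows linearly in $m$, and you give no mechanism for the uniform bound that Mazur's lemma needs. Also, since $v\le k$, your second formula is guaranteed to vanish only on $\{u\le\psi\}$. Where $u-\psi=1/(2m)$ and $v>k/2$ it is positive, so it is not in $\Htil(D_m)$; the first formula is the correct one.

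The missing idea is to use the quasicontinuity of $g=u-\psi$ only topologically, and to build the cutoff from capacity rather than from $g$; this is what the paper does. By the compact exhaustion of $\Htil(D)$ by the spaces $\Htil(K)$, $K\subset D$ compact \cite[(11.5.1)]{Adams-Hedberg}, it suffices to treat bounded $v\in\Htil(K)$.

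Choose open sets $G_j$ with $\Capa_s(G_j)\le j^{-2}$ such that $g$ is continuous off $G_j$. Then $g$ is continuous and positive on the compact set $K\setminus G_j$, so $K\setminus G_j\subset D_{m_j}$ for some $m_j$. Take capacitary potentials $0\le\eta_j\le1$ with $\eta_j=1$ q.e.\ on $G_j$, $\|\eta_j\|_{H^s(\R^n)}\to0$ and $\eta_j\to0$ a.e. Then $(1-\eta_j)v$ vanishes q.e.\ off $D_{m_j}$. It converges to $v$ because
\[
[\eta_jv]^2_{H^s(\R^n)}\le2\|v\|^2_{L^\infty(\R^n)}[\eta_j]^2_{H^s(\R^n)}+2\iint\eta_j(y)^2\frac{|v(x)-v(y)|^2}{|x-y|^{n+2s}}\,dx\,dy,
\]
and the last term tends to zero by dominated convergence. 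The energy of the cutoff is controlled by the capacity of the exceptional set, never by the regularity of $\psi$. Your truncation step then removes the boundedness assumption, exactly as in the paper.
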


\begin{proof}
    We write $g\vcentcolon =u-\psi$, where $u$ is identified with its quasicontinuous representative. By Corollary~\ref{cor: quasiopen}, the sets
    $D=\{g>0\}$ and $D_m=\{g>1/m\}$ are quasiopen. Moreover, $D_m\subset D_{m+1}\subset D$ and
    \[
        D=\bigcup_{m=1}^{\infty}D_m.
    \]
    The inclusion
    \[
      \overline{\bigcup_{m=1}^{\infty}\Htil(D_m)}^{H^s(\mathbb R^n)}
      \subset \Htil(D)
    \]
    follows immediately from $D_m\subset D$, $m\in\N$, and the monotonicity $\widetilde H^s (A)\subset \widetilde H^s(B)$ whenever $A\subset B$.

    We prove the converse inclusion.  By Adams--Hedberg's compact exhaustion
    theorem for Bessel potential spaces \cite[(11.5.1)]{Adams-Hedberg}, and $H^s(\mathbb R^n)\simeq H^{s,2}(\mathbb R^n)$,
    \[
      \Htil(D)
      =
      \overline{\bigcup_{K\subset D,\ K\text{ compact}}\Htil(K)}
      ^{H^s(\mathbb R^n)} .
    \]
    Hence it is enough to prove that each element of $\Htil(K)$, with
    $K\subset D$ compact, can be approximated in $H^s(\mathbb R^n)$ by bounded
    elements belonging to $\Htil(D_m)$ for suitable $m$.

    Let $v\in\Htil(K)$.  First assume that $v\in L^\infty(\mathbb R^n)$. By
    quasicontinuity of $g$, for every $j\in\mathbb N$ there is an open set $G_j$
    such that
    \[
        \Capa_s(G_j)\le j^{-2}
    \]
    and $g$ is continuous on $\mathbb R^n\setminus G_j$.  By the capacitary
    potential characterization of Bessel capacity, see
    \cite[Proposition~2.3.9 and Theorem~2.3.10]{Adams-Hedberg}, and by the
    equivalence of $\Capa_s$ and $C_{s,2}$, we may choose
    $\eta_j\in H^s(\mathbb R^n)$ such that
    \[
      0\le \eta_j\le 1,\qquad
      \eta_j=1\quad\text{q.e. on }G_j,\qquad
      \|\eta_j\|_{H^s(\mathbb R^n)}\to0 .
    \]
    Passing to a subsequence, we may also assume that $\eta_j\to0$ a.e. in
    $\mathbb R^n$.

    Since $K\setminus G_j$ is compact and $g$ is continuous on $\R^n\setminus G_j$ and strictly
    positive on $K\setminus G_j$, there exists $m_j\in\mathbb N$ such that
    \[
        K\setminus G_j\subset D_{m_j}.
    \]
    Set
    \[
        v_j=(1-\eta_j)v.
    \]
    Since $v,\eta_j\in H^s(\mathbb R^n)\cap L^\infty(\mathbb R^n)$, we have
    $v_j\in H^s(\mathbb R^n)$. Indeed,
    \[
      [\eta_jv]_{H^s(\mathbb R^n)}
      \le
      \|v\|_{L^\infty(\mathbb R^n)}[\eta_j]_{H^s(\mathbb R^n)}
      +\|\eta_j\|_{L^\infty(\mathbb R^n)}[v]_{H^s(\mathbb R^n)},
    \]
    and the $L^2$ part is immediate from boundedness. Moreover,
    $v=0$ q.e. on $\mathbb R^n\setminus K$ by \cite[Theorem~10.1.1]{Adams-Hedberg}, while
    $1-\eta_j=0$ q.e. on $G_j$. Hence, $K\setminus G_j\subset D_{m_j}$ yields $v_j=0$ q.e. on
    $\mathbb R^n\setminus D_{m_j}$, and again by
    \cite[Theorem~10.1.1]{Adams-Hedberg} we have $v_j\in\Htil(D_{m_j})$.

    It remains to show that $v_j\to v$ in $H^s(\mathbb R^n)$. Since
    $v_j-v=-\eta_jv$,
    \[
      \|v_j-v\|_{L^2(\mathbb R^n)}
      \le \|v\|_{L^\infty(\mathbb R^n)}
      \|\eta_j\|_{L^2(\mathbb R^n)}
      \to0.
    \]
    Moreover,
    \[
    \begin{split}
      [\eta_jv]_{H^s(\mathbb R^n)}^2
      &\le
      2\|v\|_{L^\infty(\mathbb R^n)}^2[\eta_j]_{H^s(\mathbb R^n)}^2  \\
      &\quad
      +2\iint_{\mathbb R^n\times\mathbb R^n}
      \eta_j(y)^2
      \frac{|v(x)-v(y)|^2}{|x-y|^{n+2s}}\,dx\,dy .
    \end{split}
    \]
    Since $\|\eta_j\|_{H^s(\R^n)}\to 0$, the first term tends to zero. The second term tends to zero by dominated
    convergence, because $0\le\eta_j\le1$, $\eta_j\to0$ a.e., and
    $v\in H^s(\mathbb R^n)$. Thus $v_j\to v$ strongly in
    $H^s(\mathbb R^n)$.

    Finally, let $v\in\Htil(K)$ be arbitrary. The truncations
    $T_\ell(t)=\max\{-\ell,\min\{t,\ell\}\}$, $\ell>0$, are Lipschitz and satisfy $T_\ell(0)=0$. By Lemma
    \ref{lem:lattice} and \cite[Theorem~10.1.1]{Adams-Hedberg}, $T_\ell(v)\in H^s(\mathbb R^n)$ with
    $T_\ell(v)=0$ q.e. on $\mathbb R^n\setminus K$. Hence
    $T_\ell(v)\in\Htil(K)$ by \cite[Theorem~10.1.1]{Adams-Hedberg}. Moreover,
    $T_\ell(v)\to v$ strongly in $H^s(\mathbb R^n)$: the $L^2$ convergence is
    dominated convergence, and the Gagliardo seminorm convergence follows from
    dominated convergence applied to
    $v-T_\ell(v)$, since
    \[
        |[v-T_\ell(v)](x)-[v-T_\ell(v)](y)|\le 2|v(x)-v(y)|.
    \]
    Applying the preceding
    argument to $T_\ell(v)$ and using a diagonal sequence gives bounded
    approximants from $\bigcup_m\Htil(D_m)$. This proves the reverse inclusion
    and the assertion about bounded approximating functions.
\end{proof}

\subsection{Well-posedness of the obstacle problem}

\begin{lemma}[Closedness of the admissible set]
\label{lem:closed-convex}
Let $\Omega\subset\mathbb R^n$ be an open bounded set and $0<s<1$. Let
$f\in H^s(\mathbb R^n)$ and suppose $\psi$ is quasicontinuous in $\Omega$.
Assume that
\begin{equation}
\label{eq: admissible set}
  \K_{f,\psi}
  \vcentcolon =
  \{v\in H^s(\mathbb R^n):\ v-f\in\Htil(\Omega),\
    v\ge\psi\ \qe\text{ in }\Omega\}\neq \emptyset.
\end{equation}
Then $\K_{f,\psi}$ is a closed convex subset of
$H^s(\mathbb R^n)$.
\end{lemma}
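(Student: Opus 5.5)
Convexity and closedness are checked separately; the only nontrivial point is passing the quasieverywhere inequality to limits, and for that we use Lemma~\ref{lemma: Stability of quasicontinuous representatives}.

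\emph{Convexity.} Let $v_1,v_2\in\K_{f,\psi}$ and $\lambda\in[0,1]$, and set $v=\lambda v_1+(1-\lambda)v_2$. Then
\[
v-f=\lambda(v_1-f)+(1-\lambda)(v_2-f)\in\Htil(\Omega),
\]
since $\Htil(\Omega)$ is a linear subspace. The quasicontinuous representative of a convex combination is the same convex combination of the representatives: this combination is quasicontinuous, and it agrees a.e.\ with $v$, so Remark~\ref{rem: Uniqueness of quasicontinuous representations} applies. Let $N_j$ be the capacity-zero set where $\widetilde v_j<\psi$ in $\Omega$. By $\sigma$-subadditivity (Theorem~\ref{thm: properties of cap}), $N_1\cup N_2$ has capacity zero. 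Outside this set, $\widetilde v\ge\lambda\psi+(1-\lambda)\psi=\psi$ in $\Omega$, so $v\in\K_{f,\psi}$.

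\emph{Closedness.} Let $v_k\in\K_{f,\psi}$ with $v_k\to v$ in $H^s(\mathbb R^n)$.
\begin{enumerate}[(1)]
\item The condition $v-f\in\Htil(\Omega)$ follows because $v_k-f\to v-f$ and $\Htil(\Omega)$ is closed in $H^s(\mathbb R^n)$ by its definition~\eqref{eq: tilde space 1}.
\item For the obstacle constraint, Lemma~\ref{lemma: Stability of quasicontinuous representatives} gives a subsequence with $\widetilde v_{k_j}(x)\to\widetilde v(x)$ for all $x$ outside a set $E$ with $\Capa_s(E)=0$.
\item Let $N_k$ be the capacity-zero exceptional set where $\widetilde v_k<\psi$ in $\Omega$, and put $N=E\cup\bigcup_k N_k$. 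This is a countable union of capacity-zero sets, so $\Capa_s(N)=0$ by subadditivity.
\item For $x\in\Omega\setminus N$ we have $\widetilde v_{k_j}(x)\ge\psi(x)$ for all $j$. Passing to the limit gives $\widetilde v(x)\ge\psi(x)$, hence $v\ge\psi$ q.e.\ in $\Omega$ and $v\in\K_{f,\psi}$.
\end{enumerate}

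\emph{Main subtlety.} The constraint must be read on quasicontinuous representatives, and these are only determined up to capacity-zero sets. Any choice of representative for the limit $v$ differs from $\widetilde v$ only on a capacity-zero set, by Remark~\ref{rem: Uniqueness of quasicontinuous representations}, so the conclusion does not depend on this choice. The role of $\psi$ being quasicontinuous is to make the statement ``$v\ge\psi$ q.e.'' meaningful; it is not otherwise needed in the limit argument. Nonemptiness is assumed, so no construction of an admissible element is required.
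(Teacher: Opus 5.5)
Your proof is correct and follows the paper's argument: closedness of $\Htil(\Omega)$ handles the affine condition, and a quasieverywhere convergent subsequence from Lemma~\ref{lemma: Stability of quasicontinuous representatives}, combined with countable subadditivity of $\Capa_s$, passes the obstacle inequality to the limit. Your convexity step is more detailed than the paper's, which calls it immediate: you check that the convex combination of quasicontinuous representatives is the representative of the combination.
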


\begin{proof}
    First, recall that by Theorem~\ref{thm: quasicontinuous representation} every $H^s$ function has a (global) quasicontinuous representative and hence the condition $v\geq \psi$ q.e.~in $\Omega$ in \eqref{eq: admissible set} is well-defined.
    
    Convexity of $\K_{f,\psi}$ is immediate from the affine condition $v-f\in\Htil(\Omega)$ and the pointwise convexity of the inequality $v\ge\psi$. 
    
    To prove closedness, let $v_k\in\K_{f,\psi}$, $k\in\N$, and $v_k\to v$ in $H^s(\mathbb R^n)$ as $k\to\infty$. Since
$\Htil(\Omega)$ is a closed linear subspace, $v-f\in\Htil(\Omega)$. Using Lemma~\ref{lemma: Stability of quasicontinuous representatives}, up to extracting a subsequence, we may assume that $v_k\to v$ quasieverywhere. Moreover, we have $v_k\ge\psi$ for every $k$ and $v_k\to v$ outside a set of capacity zero (see Theorem~\ref{thm: properties of cap}). Hence, $v\ge\psi$ q.e. in
$\Omega$ and therefore $v\in\K_{f,\psi}$.

This completes the proof.
\end{proof}

\begin{theorem}[Well-posedness of the obstacle problem]
\label{thm:obstacle-wp}
Let $\Omega\subset\mathbb R^n$ be an open bounded set and $0<s<1$. Let
$q\in L^\infty(\Omega)$ satisfy $q\ge0$ a.e. in $\Omega$. Let
$f\in H^s(\mathbb R^n)$, $\psi$ be quasicontinuous in $\Omega$, and
assume that $\K_{f,\psi}\neq \emptyset$. Then there is a unique
$u\in\K_{f,\psi}$ such that
\begin{equation}
\label{eq: variational inequality}
  \Bq(u,v-u)\ge0
  \qquad \forall v\in\K_{f,\psi}.
\end{equation}
Equivalently, $u$ is the unique minimizer of
\[
  J(v)\vcentcolon =\frac12\Bq(v,v)\quad\text{over }\K_{f,\psi}.
\]
\end{theorem}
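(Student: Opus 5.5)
The plan is to reduce the problem to a projection onto a closed convex set in a Hilbert space with an equivalent inner product. Fix any $w\in\K_{f,\psi}$, which exists by assumption, and translate by setting
\[
  \mathcal C\vcentcolon =\K_{f,\psi}-w\subset\Htil(\Omega).
\]
Since $v-f\in\Htil(\Omega)$ and $w-f\in\Htil(\Omega)$, every element of $\K_{f,\psi}$ has the form $w+\varphi$ with $\varphi\in\mathcal C$. By Lemma~\ref{lem:closed-convex}, $\K_{f,\psi}$ is closed and convex in $H^s(\mathbb R^n)$, so $\mathcal C$ is a nonempty closed convex subset of the closed subspace $\Htil(\Omega)$.

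On $\Htil(\Omega)$ we use the inner product $\Bq(\cdot,\cdot)$. It is symmetric, continuous on $H^s(\R^n)\times H^s(\R^n)$, and coercive by \eqref{eq: coercivity estimate}. Hence it induces a norm equivalent to $\|\cdot\|_{H^s(\R^n)}$, and $\Htil(\Omega)$ is a Hilbert space with this inner product. In particular, $\mathcal C$ is closed in this norm as well.

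Next, rewrite the functional. For $\varphi\in\Htil(\Omega)$, symmetry gives
\[
  J(w+\varphi)=\tfrac12\Bq(\varphi,\varphi)+\Bq(w,\varphi)+\tfrac12\Bq(w,w).
\]
The map $\varphi\mapsto\Bq(w,\varphi)$ is a bounded linear functional on $\Htil(\Omega)$, so by Riesz there is $z\in\Htil(\Omega)$ with $\Bq(w,\varphi)=\Bq(z,\varphi)$ for all $\varphi\in\Htil(\Omega)$. Completing the square yields
\[
  J(w+\varphi)=\tfrac12\Bq(\varphi+z,\varphi+z)+\text{const}.
\]
Minimizing $J$ over $\K_{f,\psi}$ is therefore the same as finding the $\Bq$-projection of $-z$ onto $\mathcal C$. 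The Hilbert projection theorem gives a unique minimizer $\varphi_0$, and we set $u=w+\varphi_0$.

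The projection theorem also characterizes $\varphi_0$ by
\[
  \Bq(\varphi_0+z,\varphi-\varphi_0)\ge0\qquad\text{for all }\varphi\in\mathcal C.
\]
Since $\varphi-\varphi_0\in\Htil(\Omega)$, we may replace $\Bq(z,\cdot)$ by $\Bq(w,\cdot)$. Writing $v=w+\varphi$, this inequality is exactly \eqref{eq: variational inequality}. This gives existence, and shows that the minimizer satisfies the variational inequality.

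For the converse, assume $u\in\K_{f,\psi}$ satisfies \eqref{eq: variational inequality} and let $v\in\K_{f,\psi}$. Then $v-u\in\Htil(\Omega)$, and
\[
  J(v)-J(u)=\Bq(u,v-u)+\tfrac12\Bq(v-u,v-u)\ge0.
\]
So $u$ is a minimizer.

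Uniqueness also follows directly. Suppose $u_1,u_2\in\K_{f,\psi}$ both satisfy the inequality. Test the inequality for $u_1$ with $v=u_2$, test the one for $u_2$ with $v=u_1$, and add. This gives $\Bq(u_1-u_2,u_1-u_2)\le0$. Since $u_1-u_2\in\Htil(\Omega)$, coercivity \eqref{eq: coercivity estimate} forces $u_1=u_2$.

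The only delicate point is that the translation really lands in $\Htil(\Omega)$, where coercivity holds. This is guaranteed by the affine condition $v-f\in\Htil(\Omega)$ in the definition of $\K_{f,\psi}$. Closedness, which carries the capacitary subtlety, is already supplied by Lemma~\ref{lem:closed-convex}.
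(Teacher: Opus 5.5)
Your proof is correct. Its skeleton matches the paper's: reduce to $\Htil(\Omega)$, expand $J$, produce a minimizer, show that minimizers and solutions of \eqref{eq: variational inequality} coincide, and get uniqueness by adding the two inequalities. The existence step, however, goes by a different route.

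\textbf{The paper's route.} It translates by $f$ itself. This forces it to take $f$ quasicontinuous and to reapply Lemma~\ref{lem:closed-convex} to the shifted obstacle $\psi-f$ with exterior datum $0$. It then uses the direct method:
\begin{itemize}
\item a coercive lower bound for the reduced energy $\tfrac12\Bq(w,w)+\Bq(f,w)$, together with convexity and weak lower semicontinuity (via Struwe's theorem), for existence;
\item strict convexity for uniqueness of the minimizer;
\item a one-sided derivative of $t\mapsto J(u+t(v-u))$ at $t=0$ to obtain the variational inequality.
\end{itemize}

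\textbf{Your route.} You translate by an admissible element, so Lemma~\ref{lem:closed-convex} applies verbatim. You then use the symmetry and coercivity of $\Bq$ to make it an equivalent inner product on $\Htil(\Omega)$. After absorbing the linear term by Riesz and completing the square, three things follow at once from the Hilbert projection theorem and its obtuse-angle characterization: existence, uniqueness of the minimizer, and the variational inequality. No weak compactness or semicontinuity argument is needed.

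\textbf{Comparison.} Your argument is shorter and more elementary. It does, however, depend on $J$ being a squared Hilbert norm plus a linear term. The paper's direct-method argument uses only coercivity and convexity of the energy, so it would carry over to non-quadratic convex energies.
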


\begin{proof}
    Using Theorem~\ref{thm: quasicontinuous representation}, we identify $f\in H^s(\R^n)$ with its quasicontinuous representative and hence the new obstacle $\widetilde \psi \vcentcolon =\psi-f$ is quasicontinuous in $\Omega$. Then
\[
C_{\widetilde \psi}\vcentcolon =
  \{w\in\Htil(\Omega): w\ge \widetilde\psi\ \qe\text{ in }\Omega\}
\]
is well-defined and satisfies
\[
  \K_{f,\psi}=f+C_{\widetilde \psi}.
\]
Applying Lemma~\ref{lem:closed-convex} with exterior datum $0$ and obstacle
$\widetilde\psi$ shows that $C_{\widetilde \psi}$ is nonempty, closed, and
convex in the Hilbert space $\Htil(\Omega)$. For all $w\in C_{\widetilde \psi}$,
\[
  J(f+w)
  =
  \frac12\Bq(w,w)+\Bq(f,w)+\frac12\Bq(f,f).
\]
Thus, $v=f+w\in \K_{f,\psi}$ minimizes $J$ over $\K_{f,\psi}$ if and only if $w\in C_{\widetilde \psi}$ minimizes 
\[
    \widetilde J(w)\vcentcolon = \frac12\Bq(w,w)+\Bq(f,w)
\]
over $C_{\widetilde \psi}$. By continuity of $\mathcal{B}_q$ on $H^s(\R^n)$ and \eqref{eq: coercivity estimate}, there exist $c>0$ and $\alpha_f>0$ such that 
\begin{equation}
\label{eq: well-posedness coercivity estimate}
    \widetilde J(w)\geq c\|w\|_{H^s(\R^n)}^2-\alpha_f
\end{equation}
for all $w\in C_{\widetilde \psi}$. Moreover, $\widetilde J$ is (strictly) convex. Recall that weak and strong closures of convex sets are equal and that convex continuous functionals are weakly lower semicontinuous. Thus, \cite[Theorem~1.2]{Variational-Methods} together with Lemma~\ref{lem:closed-convex} and strict convexity yields that $\widetilde J$ has a (unique) minimizer $w\in C_{\widetilde \psi}$. So, the same is true for $J$ over $\K_{f,\psi}$, which we denote by $u$.

If $v\in\K_{f,\psi}$, then convexity of $\K_{f,\psi}$ yields that
$u+t(v-u)\in\K_{f,\psi}$ for $0\le t\le1$.  Differentiating
$J(u+t(v-u))$ at $t=0$ gives the variational inequality \eqref{eq: variational inequality}. Conversely, if
$u \in \K_{f,\psi}$ satisfies \eqref{eq: variational inequality}, then for any $v\in\K_{f,\psi}$,
\[
  J(v)-J(u)
  =
  \Bq(u,v-u)+\frac12\Bq(v-u,v-u)
  \ge0,
\]
because $\Bq(v-u,v-u)\ge0$. Thus $u$ is a
minimizer.

Although, uniqueness of the minimizer $u$ follows from strict convexity of the energy functional, we will give next a more explicit argument directly working with the variational inequality. If $u_1,u_2\in \K_{f,\psi}$ are two solutions, testing the inequality for $u_1$
with $u_2$ and for $u_2$ with $u_1$ gives
\[
  \Bq(u_1,u_2-u_1)+\Bq(u_2,u_1-u_2)\ge0.
\]
Since the left hand side is $-\Bq(u_1-u_2,u_1-u_2)$, we get $\Bq(u_1-u_2,u_1-u_2)=0$. Thus, $q\geq 0$, the fractional Poincar\'e inequality, and $u_1-u_2\in\widetilde H^s(\Omega)$ imply $u_1=u_2$.
\end{proof}

\subsection{Openness of the non-contact set}

Next, we study the openness of the non-contact set $D=\{u>\psi\}$ for suitable compatible smooth obstacles. Additionally, it will show that in this case the obstacle solution $u$ satisfies $L_qu=0$ in the distributional sense in $D$, which improves Lemma~\ref{lem:equation-noncontact}. To this end, we introduce the multiplier associated with the obstacle problem and recall an abstract Lewy--Stampacchia result.

\begin{lemma}[Obstacle multiplier]
\label{lem:multiplier}
Let $\Omega\subset\mathbb R^n$ be an open bounded set and $0<s<1$. Let
$q\in L^\infty(\Omega)$ satisfy $q\ge0$ a.e. in $\Omega$. Let
$f\in H^s(\mathbb R^n)$, let $\psi$ be quasicontinuous in $\Omega$, assume
that $\K_{f,\psi}\neq \emptyset$, and let $u$ be the corresponding obstacle
solution (Theorem~\ref{thm:obstacle-wp}). The distribution $\lambda\in (\widetilde H^s(\Omega))'$, defined by
\begin{equation}
\label{eq: obstacle multiplier}
  \langle\lambda,\varphi\rangle \vcentcolon =\Bq(u,\varphi),
  \quad \varphi\in\Htil(\Omega),
\end{equation}
has the following properties:
\begin{enumerate}[(i)]
    \item\label{item: multiplier positive} $\lambda$ is nonnegative, that is,
    \[
        \langle \lambda,\varphi\rangle\geq 0
        \quad\text{for all }\varphi\in\widetilde H^s(\Omega),\ 
        \varphi\ge0\ \qe\text{ in }\Omega.
    \]
    \item\label{item: multiplier support} Let $D\vcentcolon =\{u>\psi\}$ in
    the quasiopen sense. Then, up to redefining $u$ on a set of capacity zero,
    \[
        \langle\lambda,\varphi\rangle=0
        \qquad \forall \varphi\in\Htil(D).
    \]
    Thus $\lambda$ is supported on the contact set in the capacitary sense.
    \item\label{item: multiplier complementarity} If, in addition,
    $\psi\in H^s(\mathbb R^n)$ and $\psi-f\in\Htil(\Omega)$, then
    \[
        \langle\lambda,u-\psi\rangle=0.
    \]
\end{enumerate}
\end{lemma}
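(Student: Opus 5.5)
The plan is to test the variational inequality \eqref{eq: variational inequality} with competitors of the form $v=u\pm t\varphi$. Each part then comes down to checking admissibility of the competitor. Note first that $\lambda$ is a well-defined element of $(\Htil(\Omega))'$, since $\Bq$ is continuous on $H^s(\R^n)\times H^s(\R^n)$ and $\Htil(\Omega)\subset H^s(\R^n)$.

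For \ref{item: multiplier positive}, take $\varphi\in\Htil(\Omega)$ with $\varphi\ge0$ q.e. in $\Omega$, and set $v=u+\varphi$. Then $v-f=(u-f)+\varphi\in\Htil(\Omega)$. Working with quasicontinuous representatives (Theorem~\ref{thm: quasicontinuous representation}, Remark~\ref{rem: Uniqueness of quasicontinuous representations}), we also have $v\ge u\ge\psi$ q.e. in $\Omega$, because a union of two capacity-zero sets has capacity zero by Theorem~\ref{thm: properties of cap}. Hence $v\in\K_{f,\psi}$, and \eqref{eq: variational inequality} gives $\langle\lambda,\varphi\rangle=\Bq(u,\varphi)\ge0$.

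For \ref{item: multiplier complementarity}, assume $\psi\in H^s(\R^n)$ with $\psi-f\in\Htil(\Omega)$. Then $\psi\in\K_{f,\psi}$ and $2u-\psi\in\K_{f,\psi}$: indeed $2u-\psi-f=2(u-f)-(\psi-f)\in\Htil(\Omega)$, and $2u-\psi-\psi=2(u-\psi)\ge0$ q.e. in $\Omega$. Testing \eqref{eq: variational inequality} with these two competitors gives $\Bq(u,\psi-u)\ge0$ and $\Bq(u,u-\psi)\ge0$. Since $u-\psi=(u-f)-(\psi-f)\in\Htil(\Omega)$, this yields $\langle\lambda,u-\psi\rangle=0$.

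Part \ref{item: multiplier support} is the main step, and I would use the capacitary exhaustion Lemma~\ref{lemma: exhaustion property}. Since $\lambda$ is continuous on $H^s$, it suffices to show $\langle\lambda,\varphi\rangle=0$ for bounded $\varphi\in\Htil(D_m)$, where $D_m=\{u-\psi>1/m\}$ and $u$ is quasicontinuous. Density then extends this to $\Htil(D)$.

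Fix such a $\varphi$ with $\|\varphi\|_{L^\infty}\le M$. Since $\varphi\in\Htil(D_m)$ and $D_m\subset\Omega$ up to capacity zero (because $u\ge\psi$ is only required in $\Omega$, we define $D$ as a subset of $\Omega$), \cite[Theorem~10.1.1]{Adams-Hedberg} shows that $\varphi=0$ q.e. outside $D_m$, so $\varphi\in\Htil(\Omega)$. For $|t|\le 1/(mM)$ put $v=u+t\varphi$. On $D_m$ we get $v\ge\psi+1/m-|t|M\ge\psi$ q.e., and off $D_m$ we have $v=u\ge\psi$ q.e. Thus $v\in\K_{f,\psi}$ for both signs of $t$, and \eqref{eq: variational inequality} gives $\pm t\,\Bq(u,\varphi)\ge0$, hence $\Bq(u,\varphi)=0$.

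The delicate points are two. First, the pointwise bound $|\varphi|\le M$ must hold q.e. and not merely a.e. This follows because truncation preserves quasicontinuity, and two quasicontinuous functions that agree a.e. agree q.e. (Remark~\ref{rem: Uniqueness of quasicontinuous representations}). Second, Lemma~\ref{lemma: exhaustion property} must be applied with the representative of $u$ fixed once and for all; this is the "up to redefining $u$" clause in \ref{item: multiplier support}.
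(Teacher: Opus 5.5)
Your proposal is correct and follows essentially the paper's argument. Part (i) uses the same competitor $u+\varphi$. Part (ii) uses $u\pm\varepsilon\varphi$ with bounded $\varphi\in\Htil(D_m)$, followed by the capacitary exhaustion lemma. In (iii), your competitors $\psi$ and $2u-\psi=u+(u-\psi)$ amount to the paper's $(1-t)u+t\psi$ combined with the positivity from (i).

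Your explicit check that $|\varphi|\le\|\varphi\|_{L^\infty}$ holds q.e., not just a.e., fills a small point the paper leaves implicit. Your signs in (iii) are also right: the paper's displayed inequality there should read $\le 0$, although its conclusion is correct.
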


\begin{proof}
    First of all, note that the continuity properties of $\mathcal{B}_q$ on $H^s(\R^n)$ and $u\in H^s(\R^n)$ imply that $\lambda$ is well-defined and belongs to the dual space $\widetilde H^s(\Omega)'$.
    
    Next, let us observe that $u+\varphi\in\K_{f,\psi}$, whenever $\varphi\in\Htil(\Omega)$ satisfies $\varphi\ge0$ q.e. in $\Omega$. Thus, the variational inequality \eqref{eq: variational inequality} with $v= u+\varphi$ yields $\langle\lambda,\varphi\rangle=\Bq(u,\varphi)\ge 0$. Hence, $\lambda$ is nonnegative.

We next prove the support statement, which uses only quasicontinuity of the
obstacle and the obstacle inequality. From here on, we assume that $u$ is
changed at most on a set of capacity zero such that $u$ is
quasicontinuous and hence $D$ is quasiopen (see Corollary~\ref{cor: quasiopen}). Furthermore, for $m\in\N$, set
$D_m=\{u-\psi>1/m\}$, which are again quasiopen. Let
$\varphi\in\Htil(D_m)\cap L^\infty(\Omega)$ be nonzero. For
$0<\varepsilon<1/(m\|\varphi\|_{L^\infty (\Omega)})$, both
$v_{+,\eps}\vcentcolon = u+\varepsilon\varphi$ and $v_{-,\eps}\vcentcolon = u-\varepsilon\varphi$ belong to the admissible class $\K_{f,\psi}$. Indeed, \cite[Theorem~10.1.1]{Adams-Hedberg} implies $\varphi=0$ q.e.~in $\R^n\setminus D_m$. Thus, $u\pm \eps\varphi\geq u\geq \psi$ q.e.~in $\Omega\setminus D_m$,
\[
    u\pm \eps\varphi\geq \psi+(1/m-\eps\|\varphi\|_{L^{\infty}(\Omega)})\geq \psi\quad \text{q.e.~in }D_m,
\]
and $u\pm\eps\varphi=u=f$ q.e.~in $\R^n\setminus \Omega$, implying $u\pm \eps \varphi-f\in \widetilde H^s(\Omega)$. For the last part of the statement, we used $u-f\in \widetilde H^s(\Omega)$ and again \cite[Theorem~10.1.1]{Adams-Hedberg}. Thus, $v_{\pm,\eps}\in \K_{f,\psi}$. 

Therefore, using \eqref{eq: obstacle multiplier} with $\varphi\in \widetilde H^s(D_m)\subset \widetilde H^s(\Omega)$ and testing \eqref{eq: variational inequality}
with both competitors $v_{\pm,\eps}$ yield
\[
    \pm \langle\lambda,\varphi\rangle= \pm \mathcal{B}_q(u,\varphi)=\frac{1}{\eps}\mathcal{B}_q(u,v_{\pm,\eps}-u)\geq 0.
\]
Thus, $\langle\lambda,\varphi\rangle=0$.

For a general $\varphi\in\Htil(D)$, we first use the capacitary exhaustion property (Lemma~\ref{lemma: exhaustion property}) to approximate $\varphi$ by a sequence $\varphi_k \in \widetilde H^s(D_{m_k})\cap L^{\infty}(\R^n)$. Then the previous analysis implies $\langle\lambda,\varphi_k\rangle=0$ for each $k\in\N$. Finally, by continuity of $\lambda$ on $\Htil(\Omega)$, we may pass to the limit $k\to\infty$. Thus, $\langle\lambda,\varphi\rangle=0$.

Finally, assume that $\psi\in H^s(\mathbb R^n)$ and
$\psi-f\in\Htil(\Omega)$. Then
$u-\psi=(u-f)-(\psi-f)\in\Htil(\Omega)$. For $0<t<1$, the function
$v_t\vcentcolon = u-t(u-\psi)=(1-t)u+t\psi$ remains above $\psi$ q.e.~in $\Omega$ and has
exterior value $f$, since $v_t-f\in \widetilde H^s(\Omega)$. Hence, $v_t\in \K_{f,\psi}$. By \eqref{eq: obstacle multiplier} and \eqref{eq: variational inequality} with $v=v_t$, we get
\[
    \langle\lambda,u-\psi\rangle=\Bq(u,u-\psi)=-\frac{1}{t}\Bq(u,v_t-u)\ge 0.
\]
Since $u-\psi\ge 0$ q.e.~in $\Omega$ and
$\lambda\ge 0$, the opposite inequality is automatic, hence equality holds.
\end{proof}

As an immediate consequence, we obtain:
\begin{corollary}[Equation in the noncontact set]
\label{lem:equation-noncontact}
Let $\Omega\subset\mathbb R^n$ be an open bounded set and $0<s<1$. Let
$q\in L^\infty(\Omega)$ satisfy $q\ge0$ a.e. in $\Omega$. Let
$f\in H^s(\mathbb R^n)$, and let $\psi$ be quasicontinuous in $\Omega$.
Assume that $\K_{f,\psi}\neq\emptyset$, let $u$ be the corresponding obstacle
solution, and set $D=\{u>\psi\}$ in the quasiopen sense. Then
\[
  \Bq(u,\varphi)=0
  \qquad \forall \varphi\in\Htil(D).
\]
Equivalently,
\[
  ((-\Delta)^s+q)u=0
  \quad\text{in }D
\]
in the weak sense.
\end{corollary}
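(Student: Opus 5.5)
This follows directly from Lemma~\ref{lem:multiplier}\ref{item: multiplier support}. Replace $u$ by its quasicontinuous representative (Theorem~\ref{thm: quasicontinuous representation}); by Corollary~\ref{cor: quasiopen} this changes $u$ only on a set of capacity zero and makes $D=\{u>\psi\}$ a well-defined quasiopen set. The function $u$ itself is unchanged as an element of $H^s(\mathbb R^n)$, so $\Bq(u,\cdot)$ does not depend on this choice.

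The first step is to check that the test class makes sense. Since $D\cap\Omega\subset\Omega$ and $u\ge\psi$ is only constrained in $\Omega$, we read $D$ as a subset of $\Omega$. By monotonicity of the spaces \eqref{eq: tilde space} we then have $\Htil(D)\subset\Htil(\Omega)$, so every $\varphi\in\Htil(D)$ is an admissible argument of the multiplier $\lambda$ from \eqref{eq: obstacle multiplier}. For such $\varphi$, the definition of $\lambda$ together with Lemma~\ref{lem:multiplier}\ref{item: multiplier support} gives
\[
  \Bq(u,\varphi)=\langle\lambda,\varphi\rangle=0 .
\]
This is the claimed identity.

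The second step is to interpret this identity as the weak equation. For $u\in H^s(\mathbb R^n)$ and $\varphi\in\Htil(D)$, the expression $\mathcal E_s(u,\varphi)+\int_\Omega qu\varphi\,dx$ is by definition the weak pairing $\langle((-\Delta)^s+q)u,\varphi\rangle$. Here we use that $\mathcal E_s(u,\varphi)=\int v(-\Delta)^s u$ for smooth functions, as recorded in Section~\ref{subsec: functional-analytic setup}, and extend by continuity. So vanishing on $\Htil(D)$ is precisely the meaning of $L_qu=0$ weakly in the quasiopen set $D$. Conversely, the weak equation in $D$ means exactly that this pairing vanishes on $\Htil(D)$, which gives the stated equivalence.

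The only real work, namely extending from bounded test functions supported in $D_m=\{u-\psi>1/m\}$ to all of $\Htil(D)$, was already done in Lemma~\ref{lem:multiplier} through the exhaustion Lemma~\ref{lemma: exhaustion property}. The remaining point to watch is consistency: the same quasicontinuous representative of $u$ must be used both to define $D$ and in Lemma~\ref{lem:multiplier}. This holds by the uniqueness of quasicontinuous representatives up to capacity-zero sets (Remark~\ref{rem: Uniqueness of quasicontinuous representations}), and such sets do not change $\Htil(D)$.
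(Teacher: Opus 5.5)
Your proposal is correct and follows the paper's own proof. The paper likewise just reads off $\Bq(u,\varphi)=\langle\lambda,\varphi\rangle=0$ for $\varphi\in\Htil(D)\subset\Htil(\Omega)$ from Lemma~\ref{lem:multiplier}\ref{item: multiplier support}; your extra remarks on the choice of quasicontinuous representative and on the weak formulation are consistent with the paper's conventions but add nothing essential.
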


\begin{proof}
By Lemma~\ref{lem:multiplier}\ref{item: multiplier support}, the multiplier
$\lambda\in\Htil(\Omega)'$ defined by
\[
  \langle\lambda,\varphi\rangle=\Bq(u,\varphi),
  \qquad \varphi\in\Htil(\Omega),
\]
satisfies
\[
  \langle\lambda,\varphi\rangle=0
  \qquad \forall \varphi\in\Htil(D).
\]
This is precisely the asserted identity.
\end{proof}

In the sequel, we shall use an abstract Lewy--Stampacchia result for which we need to introduce some terminology (see \cite{schaeferbanach,mosco2006implicit}). 

An ordered set $(X,\leq)$ is called \emph{lattice}, if the elements $x\vee y\vcentcolon = \sup(x,y)$ and $x\wedge y\vcentcolon = \inf (x,y)$ exist for all $x,y\in X$. Moreover, it is called \emph{vector lattice}, when $X$ is a (real) vector space such that the lattice operators are compatible with the vector operators, that is,
\[
    x\leq y\quad \Rightarrow \lambda x+z\leq \lambda y+z,\quad \text{for all }z\in X,\,\lambda\geq 0.
\]
In particular, any $x\in X$ can be decomposed as
\[
    x=x_+-x_{-}\quad\text{with}\quad x_{\pm}\vcentcolon = (\pm x)\vee 0.
\]
Moreover, we may define the absolute value function $|\,\cdot\,|$ by
\[
    |x|\vcentcolon = x_+ + x_{-}.
\]

Let now $V$ be a Hilbert space, which carries the structure of a vector lattice. Throughout this paragraph $V'$ denotes the topological dual of $V$, ordered
by the dual positive cone $P'$:
\[
  \ell_1\le \ell_2
  \quad\Longleftrightarrow\quad \ell_2-\ell_1\in P'\vcentcolon = \{\ell \in V': \langle \ell,v\rangle\geq 0\text{ for all }v\in V,\ v\ge 0\}.
\]
When we write $\ell_1\vee\ell_2$ or $\ell_1\wedge\ell_2$ in $V'$, we mean the
supremum or infimum with respect to this order, whenever it exists. The \emph{order dual} $V^*$ is then the subspace generated by $P'$, meaning that $V^*=P'-P'$.

The Lewy--Stampacchia theorem below considers operators $A\colon V\to V'$ with the following properties:
\begin{enumerate}[(a)]
    \item\label{hemicontinuity} $A$ is \emph{hemicontinuous}, that is, $[0,1]\ni t\mapsto \langle A(u+tv),w\rangle \in\R$ is continuous for all $u,v,w\in V$ (cf., e.g., \cite[Definition~3.2]{mosco2006implicit}).
    \item\label{coercivity mapping} $A$ is \emph{coercive}, that is, 
    \[
        \frac{\langle Av,v\rangle}{\|v\|_V}\to \infty\quad\text{as }\|v\|_V\to \infty;
    \]
    see \cite{Variational-Methods} or \cite[Eq.~(3.17)]{mosco2006implicit}.
    \item\label{t-monotone} $A$ is \emph{T-monotone}, that is, 
    \begin{equation}
    \label{eq: t-monotone op}
        \langle Au-Av,(u-v)_+\rangle \geq 0\quad\text{for all }u,v\in V;
    \end{equation}
    see \cite[Eq.~(2.2)]{mosco2006implicit}. We say $A$ is \emph{strictly T-monotone}, if it is T-monotone and equality in \eqref{eq: t-monotone op} implies $u\leq v$.
\end{enumerate}
Moreover, we note that in our setting \cite[Lemma~2.1]{mosco2006implicit} implies that a (strictly) T-monotone operator is in particular (strictly) monotone, meaning that
\begin{equation}
\label{eq: monotone operator}
    \langle Au-Av,u-v\rangle  \geq 0\quad\text{for all }u, v\in V,
\end{equation}
while strictly monotone means that \eqref{eq: monotone operator} holds with strict inequality sign whenever $u\neq v$.

Using the above notation, we have the following result.

\begin{theorem}[Abstract Lewy--Stampacchia theorem]
\label{thm:mosco-ls}
Let $V$ be a real Hilbert space which is also a vector lattice, and let
$A:V\to V'$ be bounded, coercive, hemicontinuous, and strictly T-monotone.

Let $\chi\in V$, $F\in V'$, and let $w\in V$ solve
\begin{equation}
\label{eq: assumps for abstract lewy stampacchia thm}
  w\ge\chi,\qquad
  \langle Aw-F,z-w\rangle\ge0
  \qquad\text{for every }z\in V,\ z\ge\chi.
\end{equation}
Then
\begin{equation}
\label{eq: lower bound LS}
  F\le Aw.
\end{equation}
Moreover, if $\Lambda\in V'$ satisfies
\begin{equation}
\label{eq: assumps on Lambda}
  \Lambda\ge F,\qquad \Lambda\ge A\chi,
\end{equation}
then
\begin{equation}
\label{eq: upper bound LS}
  Aw\le \Lambda.
\end{equation}
Consequently, if the order supremum $F\vee A\chi$ exists in $V'$, then
\begin{equation}
\label{eq: two-sided bound LS}
  F\le Aw\le F\vee A\chi .
\end{equation}
\end{theorem}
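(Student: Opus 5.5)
The lower bound \eqref{eq: lower bound LS} should follow directly from the variational inequality. For any $v\in V$ with $v\ge0$, the competitor $z=w+v$ satisfies $z\ge w\ge\chi$. Inserting it into \eqref{eq: assumps for abstract lewy stampacchia thm} gives $\langle Aw-F,v\rangle\ge0$, that is, $Aw-F\in P'$.

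For the upper bound \eqref{eq: upper bound LS} I plan to use a dual obstacle problem whose upper obstacle is $w$ itself. Fix $\Lambda$ satisfying \eqref{eq: assumps on Lambda}, and let $\zeta\in V$ solve
\[
  \zeta\le w,\qquad \langle A\zeta-\Lambda,y-\zeta\rangle\ge0\qquad\text{for all }y\in V,\ y\le w.
\]
Existence should come from the Browder--Minty/Stampacchia theory for bounded, coercive, hemicontinuous, monotone operators on a nonempty closed convex set (monotonicity via \cite[Lemma~2.1]{mosco2006implicit}). This needs the set $\{y\le w\}$ to be closed, i.e.\ the positive cone of $V$ to be closed; I will assume this as part of the Hilbert lattice structure. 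Testing with $y=\zeta-v$, $v\ge0$, gives $\langle A\zeta-\Lambda,v\rangle\le0$, so $A\zeta\le\Lambda$. It therefore suffices to show $\zeta=w$.

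First I show $\zeta\ge\chi$. Since $\chi\le w$, the function $y=\zeta\vee\chi=\zeta+(\chi-\zeta)_+$ is admissible for $\zeta$, which yields $\langle A\zeta-\Lambda,(\chi-\zeta)_+\rangle\ge0$. Combining this with $\Lambda\ge A\chi$ gives
\[
  \langle A\chi-A\zeta,(\chi-\zeta)_+\rangle\le\langle \Lambda-A\zeta,(\chi-\zeta)_+\rangle\le0 .
\]
By T-monotonicity the left-hand side is $\ge0$, so equality holds and strict T-monotonicity gives $\chi\le\zeta$.

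Next I show $w\le\zeta$. Since $\chi\le\zeta$ and $\chi\le w$, the function $w\wedge\zeta=w-(w-\zeta)_+$ is admissible in \eqref{eq: assumps for abstract lewy stampacchia thm}, so $\langle Aw-F,(w-\zeta)_+\rangle\le0$. Also $y=\zeta\vee w=w$ is admissible for $\zeta$, so $\langle A\zeta-\Lambda,(w-\zeta)_+\rangle\ge0$. Subtracting these and using $F\le\Lambda$ gives
\[
  \langle Aw-A\zeta,(w-\zeta)_+\rangle\le\langle F-\Lambda,(w-\zeta)_+\rangle\le0 .
\]
Strict T-monotonicity then forces $w\le\zeta$. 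Together with $\zeta\le w$ this gives $\zeta=w$, hence $Aw=A\zeta\le\Lambda$.

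Finally, if $F\vee A\chi$ exists, I take $\Lambda=F\vee A\chi$ and obtain \eqref{eq: two-sided bound LS}. The main point to watch is existence of $\zeta$ for the upper-obstacle problem, which rests on closedness of the order cone and the standard existence theory. The rest is careful bookkeeping with the lattice identities $\zeta\vee\chi=\zeta+(\chi-\zeta)_+$ and $w\wedge\zeta=w-(w-\zeta)_+$.
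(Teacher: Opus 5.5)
Your proof is correct, but it takes a different route from the paper. The paper proves nothing by hand. It rewrites the lower-obstacle problem in Mosco's upper-obstacle form via $y=-w$, $\varphi=-\chi$, $G=-F$, $Bz=-A(-z)$, and checks that $B$ inherits boundedness, hemicontinuity, coercivity and strict T-monotonicity. It then reads off both $F\le Aw$ and $Aw\le\Lambda$ from \cite[Theorem~4.1]{mosco2006implicit}.

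You instead get the lower bound directly by testing with $w+v$, $v\ge0$. For the upper bound you carry out the dual-estimate argument yourself on the lower-obstacle problem. The auxiliary solution $\zeta$, with upper obstacle $w$ and datum $\Lambda$, satisfies $A\zeta\le\Lambda$. Strict T-monotonicity, used once against $\chi$ (with $\Lambda\ge A\chi$) and once against $w$ (with $\Lambda\ge F$), then forces $\zeta=w$. All the lattice manipulations check out.

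What each version buys:
\begin{itemize}
\item The paper's version is shorter, but it delegates to Mosco both the solvability of the auxiliary problem and the standing assumptions on the ordered space.
\item Yours is self-contained. It shows that the lower bound uses nothing beyond the variational inequality, and that boundedness, coercivity and hemicontinuity of $A$ are needed only to produce $\zeta$.
\item Yours also exposes that the positive cone of $V$ must be closed. The paper's statement leaves this hypothesis implicit; it does hold for $\widetilde H^s(\Omega)$ with the q.e.\ order, by the argument of Lemma~\ref{lem:closed-convex}.
\end{itemize}

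One detail to add when you invoke the existence theory. The set $\{y\le w\}$ need not contain $0$, so you need coercivity relative to a point $y_0$ of this set, i.e.\ $\langle Av-\Lambda,v-y_0\rangle/\|v\|\to\infty$. This follows from $\langle Av,v\rangle/\|v\|\to\infty$ together with monotonicity and boundedness of $A$. Average the trivial bound $\langle Av,v-y_0\rangle\ge\langle Av,v\rangle-\|Av\|\,\|y_0\|$ with the bound from monotonicity against $y_0+\rho e$. Here $e$ is a unit vector norming $Av$ and $\rho=\|y_0\|$. That second bound is $\langle Av,v-y_0\rangle\ge\rho\|Av\|-C(\|v\|+1)$, with $C$ coming from boundedness of $A$ on $B_\rho(y_0)$. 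The $\|Av\|$ terms then cancel.
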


\begin{proof}
We apply Mosco's abstract dual estimate for the upper-obstacle problem
\cite[Theorem~4.1]{mosco2006implicit}.  We use it only in
the special case $X=V$, so Mosco's sublattice and obstacle compatibility
assumptions are automatic.  Also, in the present paper $V'$ denotes the
topological dual of $V$, ordered by the dual positive cone $P'$.

Mosco's theorem, in the notation used here, says the following. Let
$B:V\to V'$ satisfy the same structural assumptions as $A$. Let
$\varphi\in V$, $G\in V'$, and let $y\in V$ solve
\begin{equation}
\label{eq: variational ineq for B G phi}
  y\le \varphi,\qquad
  \langle By-G,\eta-y\rangle\ge0
  \qquad\text{for all }\eta\in V,\ \eta\le\varphi.
\end{equation}
Then
\begin{equation}
\label{eq: concl 1}
  By\le G.
\end{equation}
Moreover, if $h\in V'$ satisfies
\begin{equation}
\label{eq: conds on h}
  h\le G,\qquad h\le B\varphi,
\end{equation}
then
\begin{equation}
\label{eq: concl 2}
  h\le By.
\end{equation}

We reduce the lower-obstacle problem to this upper-obstacle form.  Set
\[
  y \vcentcolon =-w,\qquad \varphi \vcentcolon =-\chi,\qquad G \vcentcolon =-F,
\]
and define $B\colon V\to V'$ by $B z\vcentcolon =-A(-z)$ for $z\in V$.

Then the first condition in \eqref{eq: assumps for abstract lewy stampacchia thm} implies $y\le\varphi$.  If $\eta\le\varphi$ and $z\vcentcolon =-\eta$, then $z\ge\chi$ and hence the variational inequality in \eqref{eq: assumps for abstract lewy stampacchia thm} ensures
\[
\begin{split}
  \langle By-G,\eta-y\rangle
  &=
  \langle -A(-y)+F,\eta-y\rangle  =
  \langle Aw-F,z-w\rangle
  \ge0.
\end{split}
\]
Thus $y$ solves Mosco's upper-obstacle variational inequality \eqref{eq: variational ineq for B G phi} for $B$, $G$,
and $\varphi$.

Moreover, the operator $B$ satisfies the same structural assumptions as $A$. Boundedness
and hemicontinuity are immediate. Coercivity follows from
\[
  \langle By,y\rangle
  =
  \langle -A(-y),y\rangle
  =
  \langle A(-y),-y\rangle .
\]
Strict $T$-monotonicity is inherited as well.  Indeed, if
$(y_1-y_2)^+\ne0$, then, with $u=-y_2$ and $v=-y_1$,
\[
\begin{split}
  \langle By_1-By_2,(y_1-y_2)^+\rangle
  &=
  \langle A(-y_2)-A(-y_1),(y_1-y_2)^+\rangle \\
  &=
  \langle Au-Av,(u-v)^+\rangle
  >0.
\end{split}
\]
So, we can apply \cite[Theorem~4.1]{mosco2006implicit}. First, \eqref{eq: concl 1} gives $By\le G=-F$. Since $By=-Aw$, this yields
\[
  F\le Aw.
\]

Assume now that $\Lambda\in V'$ is a common upper bound of $F$ and $A\chi$,
that is, \eqref{eq: assumps on Lambda} holds.
Hence
\[
  -\Lambda\le -F=G,\qquad -\Lambda\le -A\chi=B\varphi.
\]
Thus $h\vcentcolon =-\Lambda$ satisfies \eqref{eq: conds on h} and by \eqref{eq: concl 2} we get $-\Lambda\le By$. Using again $By=-Aw$, we obtain
\[
 Aw\le \Lambda.
\]
Combining the two bounds gives
\[
  F\le Aw\le \Lambda
  \qquad\text{in }V'.
\]
The estimate \eqref{eq: two-sided bound LS} follows by applying this with $\Lambda\vcentcolon = F\vee A\chi$.
\end{proof}

\begin{remark}
    The proof above uses Mosco's theorem \cite[Theorem~4.1]{mosco2006implicit} in
its form with an arbitrary lower bound $h\in V'$ satisfying
$h\le G$ and $h\le B\varphi$. Therefore no membership in the order dual $V^*$
is needed. The assumption that $F\vee A\chi$ exists in our ordered dual space
only provides the particular lower bound $h=-(F\vee A\chi)$ after the sign
change.

The more recent theorem of Rodrigues--Teymurazyan
\cite[Theorem~4.2]{Rodrigues-Teymurazyan-LS} gives a two-obstacle extension
of Mosco's estimate in the same abstract $T$-monotone framework.  In
particular, their Proposition~4.4 recovers the one-obstacle
Lewy--Stampacchia inequalities.  For the present one-obstacle theorem,
Mosco's original result is sufficient.

Moreover, fractional and nonlocal variants of Lewy--Stampacchia inequalities are also
discussed in \cite{Kow-Kimura-LS,Lo-Rodrigues-LS}.
\end{remark}

Our next result specializes to the fractional Schrödinger operator with compatible smooth data.

\begin{lemma}[Lewy--Stampacchia bound for compatible smooth obstacles]
\label{lem:Lewy--Stampacchia}
Let $\Omega\subset\mathbb R^n$ be an open bounded set and $0<s<1$. Let
$q\in L^\infty(\Omega)$ satisfy $q\ge0$ a.e. in $\Omega$.  Let
$f\in H^s(\mathbb R^n)$, let $\psi$ be quasicontinuous in $\Omega$, assume
that $\K_{f,\psi}$ is nonempty, and let $u$ be the corresponding obstacle
solution (Theorem~\ref{thm:obstacle-wp}).  Assume that there is a function
$\Psi\in H^s(\mathbb R^n)\cap L^\infty(\mathbb R^n)$ such that
\begin{equation}
\label{eq: representation of f and psi by common function}
  \Psi-f\in\Htil(\Omega),\qquad \Psi=\psi\quad\qe\text{ in }\Omega,
\end{equation}
and such that both distributions
\begin{equation}
\label{eq: schrödinger source }
  L_q f:=(-\Delta)^s f+qf,\qquad
  L_q\Psi:=(-\Delta)^s\Psi+q\Psi
\end{equation}
are represented in $\Omega$ by functions in $L^\infty(\Omega)$.  Then the obstacle
multiplier
\[
  \lambda=L_q u\in H^{-s}(\Omega),
\]
defined in Lemma~\ref{lem:multiplier}, satisfies
\begin{equation}
\label{eq: bounds for obstacle multiplier}
  0\le \lambda\le (L_q\Psi)_+ .
\end{equation}
In particular $\lambda$ is represented by an $L^\infty(\Omega)$ function.
\end{lemma}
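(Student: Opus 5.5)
We reduce to Theorem~\ref{thm:mosco-ls} with $V\vcentcolon=\Htil(\Omega)$, ordered q.e.\ (equivalently a.e., since for $H^s$ functions the two orders agree by Remark~\ref{rem: Uniqueness of quasicontinuous representatives}). By Lemma~\ref{lem:lattice}, $V$ is closed under $v\mapsto v_\pm$, so $V$ is a Hilbert vector lattice.

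\textbf{Setup.} Write $u=f+w$ with $w\in V$, and set $\chi\vcentcolon=\Psi-f\in V$. Then $w\ge\chi$ q.e.\ in $\Omega$, and $z\ge\chi$ in $V$ exactly when $f+z\in\K_{f,\psi}$. Define $A\colon V\to V'$ by $\langle Aw,\varphi\rangle\vcentcolon=\Bq(w,\varphi)$ and $F\in V'$ by $\langle F,\varphi\rangle\vcentcolon=-\Bq(f,\varphi)$. The variational inequality \eqref{eq: variational inequality} then becomes precisely \eqref{eq: assumps for abstract lewy stampacchia thm}.

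\textbf{Structure of $A$.} $A$ is linear, bounded and hemicontinuous. It is coercive by \eqref{eq: coercivity estimate}. For T-monotonicity, put $a=u-v$; then \eqref{eq: lattice energy inequalities} and $q\ge0$ give
\[
\langle Aa,a_+\rangle\ge\mathcal E_s(a_+,a_+)+\int_\Omega q\,a_+^2\ge0 .
\]
Equality forces $[a_+]_{H^s}=0$, hence $a_+=0$ by \eqref{eq: fractional Poincaré}. So $A$ is strictly T-monotone.

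\textbf{Applying the abstract theorem.} Theorem~\ref{thm:mosco-ls} gives $F\le Aw$, that is, $\lambda=Aw-F=\Bq(u,\cdot)\ge0$. This also follows from Lemma~\ref{lem:multiplier}. For the upper bound, we identify the two functionals through their densities: $F=-L_qf$ and $A\chi=L_q\Psi-L_qf$, both in $L^\infty(\Omega)\subset V'$ since $\Omega$ is bounded. Choose
\[
\Lambda\vcentcolon=-L_qf+(L_q\Psi)_+\in L^\infty(\Omega)\subset V'.
\]
Then $\Lambda-F=(L_q\Psi)_+\ge0$ and $\Lambda-A\chi=(L_q\Psi)_+-L_q\Psi\ge0$. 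For a function $g\in L^\infty(\Omega)$, the functional $\varphi\mapsto\int g\varphi$ is $\ge0$ on nonnegative $\varphi$ when $g\ge0$ a.e. Hence \eqref{eq: upper bound LS} yields $Aw\le\Lambda$, i.e.\ $\lambda=Aw-F\le(L_q\Psi)_+$.

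\textbf{Main obstacle: the $L^\infty$ representation.} We have $0\le\lambda\le g\vcentcolon=(L_q\Psi)_+$ in the dual order. Then $\lambda$ and $g-\lambda$ are nonnegative functionals on $V\supset C_c^\infty(\Omega)$, hence positive Radon measures on $\Omega$, and $\lambda\le g\,dx$ as measures. So $\lambda$ is absolutely continuous with density $0\le\rho\le g\in L^\infty$.

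The remaining step is to check that $\langle\lambda,\varphi\rangle=\int\rho\varphi$ for all $\varphi\in V$, not only for test functions. We would handle this by density: $\varphi\mapsto\int\rho\varphi$ is continuous on $V$ because $\rho\in L^\infty\subset L^2$. Together with the density of $C_c^\infty(\Omega)$ in $V$, this gives the identity on all of $V$.

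A secondary point is that $L_qf$ is assumed only "represented in $\Omega$" by an $L^\infty$ function. We should check that its pairing with $\varphi\in V$ equals $\Bq(f,\varphi)$, which again follows by density of $C_c^\infty(\Omega)$ in $V$.
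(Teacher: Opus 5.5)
Your proposal is correct and follows the paper's proof essentially step by step. It makes the same reduction to Theorem~\ref{thm:mosco-ls} with $V=\Htil(\Omega)$, $w=u-f$, $\chi=\Psi-f$, $F=-L_qf$, the same strict T-monotonicity check via the lattice inequality, and the same upper bound $\Lambda=(L_q\Psi)_+-L_qf$, followed by the Riesz/absolute-continuity argument that produces a density $0\le\rho\le(L_q\Psi)_+$. Your density argument extending the $L^\infty$ representations (of $\lambda$, $L_qf$, $L_q\Psi$) from $C_c^\infty(\Omega)$ to all of $\Htil(\Omega)$ is a detail the paper leaves implicit, and you handle it correctly.
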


\begin{proof}
The lower bound $0\le \lambda$ is already Lemma~\ref{lem:multiplier}\ref{item: multiplier positive}. We now prove the upper bound in \eqref{eq: bounds for obstacle multiplier} from Theorem~\ref{thm:mosco-ls}. We will apply it with the choices:
\[
    V\vcentcolon =\Htil(\Omega)\quad \text{and}\quad A\vcentcolon = L_q.
\]
Moreover, to match the notation in that result, let us set $w\vcentcolon =u-f$ and $\chi\vcentcolon =\Psi-f$. Then $w,\chi\in\Htil(\Omega)$ and the
obstacle problem \eqref{eq: variational inequality} is equivalently 
\[
  w\in C_\chi\quad\text{with}\quad\Bq(w,z-w)\ge -\Bq(f,z-w)\quad\text{for all }z\in C_\chi,
\]
where 
\[
    C_\chi \vcentcolon = \{z\in\Htil(\Omega):z\ge\chi\ \qe\text{ in }\Omega\}.
\]
Thus the data in the homogeneous problem are the
functional
\[
    F\vcentcolon =-\Bq(f,\,\cdot\,)=\langle-L_qf,\,\cdot\,\rangle
\]
and the obstacle is $\chi$. 

On $V=\Htil(\Omega)$ we use the capacitary order
\[
  v\le w
  \quad\Longleftrightarrow\quad
  v\le w\quad\text{q.e. in }\Omega,
\]
where all pointwise statements are understood for quasicontinuous
representatives. This is well-defined by uniqueness of quasicontinuous
representatives. See Theorem~\ref{thm: quasicontinuous representation} and Remark~\ref{rem: Uniqueness of quasicontinuous representations}. With
this order,
\[
  v\vee w=w+(v-w)_+,\qquad v\wedge w=v-(v-w)_+,
\]
and Lemma~\ref{lem:lattice}, \ref{item: lipschitz map inhomogeneous} together with \cite[Theorem~10.1.1]{Adams-Hedberg} shows that these functions belong to
$\Htil(\Omega)$. Thus $\Htil(\Omega)$ is a vector lattice.
The dual space $\Htil(\Omega)'$ is ordered by the dual positive cone:
\[
  \ell_1\le \ell_2
  \quad\Longleftrightarrow\quad
  \langle \ell_2-\ell_1,\varphi\rangle\ge0
  \quad\text{for all }\varphi\in\Htil(\Omega),\ \varphi\ge0\ \text{q.e.}
\]

Next, we verify the conditions for $A=L_q$, which is represented by the bilinear form $\Bq$. Since $q\ge0$, the bilinear form
$\Bq$ is symmetric, continuous, and coercive on $\Htil(\Omega)$; see Section~\ref{subsec: functional-analytic setup}. Thus, boundedness, hemicontinuity, and coercivity of $L_q$ are immediate.

So, it remains to check that $L_q$ is strictly T-monotone. Indeed, for all
$y_1,y_2\in\Htil(\Omega)$,
\[
\begin{split}
    \langle L_qy_1-L_qy_2,(y_1-y_2)_+\rangle
  &=
  \Bq(y_1-y_2,(y_1-y_2)_+)\\
  &\ge
  \Bq((y_1-y_2)_+,(y_1-y_2)_+)\ge0.
\end{split}
\]
If $(y_1-y_2)_+\ne0$, the last quantity is strictly positive by coercivity \eqref{eq: coercivity estimate}.

By assumption, the functional $F=-L_q f$ and the element
$L_q\chi=L_q(\Psi-f)$ are represented by $L^\infty(\Omega)$ functions. Set
\[
  \Lambda \vcentcolon = (L_q\Psi)_+ - L_qf .
\]
Then $\Lambda\in L^\infty(\Omega)$, hence it defines an element of
$\Htil(\Omega)'$.  Moreover, in the dual cone order,
\[
  \Lambda\ge -L_qf=F,
\]
because $(L_q\Psi)_+\ge0$, and
\[
  \Lambda\ge L_q\Psi-L_qf=L_q(\Psi-f)=L_q\chi,
\]
because $(L_q\Psi)_+\ge L_q\Psi$. Thus $\Lambda$ is an admissible upper
bound for both $F$ and $L_q\chi$ in Theorem~\ref{thm:mosco-ls} (see \eqref{eq: assumps on Lambda}).  Applying
that theorem gives
\[
  F\le L_qw\le \Lambda .
\]
Adding $L_qf$ and using $u=f+w$, we obtain
\[
  0\le L_qu\le (L_q\Psi)_+ .
\] 

Finally, we note that a positive distribution dominated
by an $L^\infty$ function is represented by an $L^\infty$ density. First,
for nonnegative $\varphi\in C_c^\infty(\Omega)$, we have
\begin{equation}
\label{eq: two-sided bound multiplier obstacle}
  0\le \langle L_q u,\varphi\rangle
  \le \int_\Omega (L_q\Psi)_+\varphi\,dx.
\end{equation}
Hence, by Riesz's representation theorem, \cite[Section~1.8]{evans-mt}, there exists a (finite) Radon measure $\nu$ such that
\[
    \langle L_q u,\varphi\rangle=\int_\Omega \varphi\,d\nu.
\]
By mollification, we conclude that $\nu\ll \mu\vcentcolon = (L_q\Psi)_+\,dx$. Indeed, first 
\[
    \int_\Omega \varphi\,d\nu\leq \int_\Omega (L_q\Psi)_+\varphi\,dx
\]
holds for any $\varphi\in C_c(\Omega)$, which implies
\[
    \nu(C)\leq \int_C (L_q\Psi)_+\,dx
\]
for any measurable $C$ compactly contained in $\Omega$. Hence, using an exhaustion by compact sets of $\Omega$, denoted by $(K_n)_{n\in\N}$, we find that
\begin{equation}
\label{eq: estimate between nu mu}
\begin{split}
    \nu(D)&=\lim_{n\to\infty}\nu(D\cap K_n)\\
    &\leq \lim_{n\to\infty}\int_{D\cap K_n}(L_q\Psi)_+\,dx\\
    &= \int_{D}(L_q\Psi)_+\,dx
\end{split}
\end{equation}
for any measurable set $D\subset \Omega$. In the first and last equality we used monotone convergence. 

In particular, $\nu \ll \mu\vcentcolon = (L_q\Psi)_+\,dx$ and so the Radon--Nikodym theorem, \cite[Section~1.6]{evans-mt}, yields
\[
    \nu(C)=\int_{C}\rho\,dx
\]
for all measurable sets $C\subset \Omega$. Indeed, $\rho=(D_\mu \nu)(L_q\Psi)_+$ in the notation of \cite{evans-mt}. Now, \eqref{eq: estimate between nu mu} demonstrates that $\rho\in L^1(\Omega)$ and so we deduce from Lebesgue's differentiation theorem that
\[
    \rho(x)\leq (L_q\Psi)_+(x)
\]
for a.e.~$x\in\Omega$. Especially, $\rho\in L^{\infty}(\Omega)$. This concludes the proof.
\end{proof}

\begin{lemma}[Local continuity for bounded fractional Poisson solutions]
\label{lem:local-continuity}
Let $\Omega\subset\mathbb R^n$ be open, let $0<s<1$, let
$u\in H^s(\mathbb R^n)\cap L^\infty(\mathbb R^n)$, and let
$g\in L^\infty_{\rm loc}(\Omega)$.  Assume that
\[
  (-\Delta)^s u=g\quad\text{in }\mathcal D'(\Omega).
\]
Then $u$ has a continuous representative in $\Omega$.
\end{lemma}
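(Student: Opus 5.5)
The plan is to localize and reduce to a global Poisson problem plus an $s$-harmonic remainder, and then use known regularity for each piece. Continuity is a local property, so it suffices to fix a ball $B=B_{2r}(x_0)$ with $\overline{B}\subset\Omega$ and show that $u$ has a continuous representative on $B_r(x_0)$. These representatives agree a.e. on overlaps, so they glue to a continuous representative on $\Omega$.

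First we split off the source. Let $\eta\in C_c^\infty(B)$ with $0\le\eta\le1$ and $\eta=1$ on $B_{3r/2}(x_0)$, and set $g_0:=\eta g\in L^\infty(\R^n)$, which has compact support. Define the Riesz potential $w:=I_{2s}g_0=c\,|x|^{2s-n}\ast g_0$; this is well defined since $n\ge2>2s$. The kernel is locally integrable and $g_0$ is bounded with compact support, so $w$ is continuous on $\R^n$. In fact it is Hölder continuous, by the standard estimate for Riesz potentials of bounded compactly supported functions. Moreover $(-\Delta)^s w=g_0$ in $\mathcal D'(\R^n)$, which can be checked on the Fourier side. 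We also have $w\in L^\infty_{\rm loc}$, and $w$ decays like $|x|^{2s-n}$ at infinity.

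Next we treat the remainder $h:=u-w$. It satisfies $(-\Delta)^s h=0$ in $\mathcal D'(B_{3r/2}(x_0))$. The function $u$ is bounded, and $w$ is locally bounded and decays, so $h$ is locally bounded with $\int_{\R^n}|h(y)|(1+|y|)^{-n-2s}\,dy<\infty$. The key step is to show that such a distributional $s$-harmonic function in a ball agrees a.e. with a smooth, hence continuous, function in a smaller ball. I would first try a mollification argument. The function $h_\varepsilon=h\ast\rho_\varepsilon$ is smooth and $s$-harmonic in $B_{3r/2-\varepsilon}$. By uniqueness for the exterior problem with the Poisson kernel (the weak maximum principle for smooth functions), $h_\varepsilon$ equals the fractional Poisson integral of its exterior values over $B_{5r/4}$. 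The Poisson kernel $P_r(x,y)=c\,\bigl((r^2-|x|^2)/(|y|^2-r^2)\bigr)^s|x-y|^{-n}$ is smooth in $x\in B_r$, uniformly for $y$ outside $B_{5r/4}$, and it is integrable against the tail weight. Passing to the limit $\varepsilon\to0$ then shows that $h$ agrees a.e. in $B_r(x_0)$ with a $C^\infty$ function. The uniformity in the limit uses the bound $|h_\varepsilon|\le\|h\|_{L^\infty}$ near the ball and dominated convergence on the tail. An alternative is to cite the interior regularity results of Ros-Oton--Serra or Silvestre for distributional solutions with finite $L^1$ tail. This gives the same conclusion: locally $C^{2s+\alpha}$ bounds for $s$-harmonic functions, and $C^{2s}$-type bounds (log-Lipschitz when $2s=1$) for bounded right-hand sides.

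Combining the two pieces, $u=w+h$ a.e. on $B_r(x_0)$ with $w$ and $h$ both continuous there, which is what we needed. I expect the main obstacle to be the rigorous justification that a merely distributional $s$-harmonic function with slowly decaying tail equals its Poisson integral. The difficulty is the interchange of the mollifier with the nonlocal operator and the control of the tail. If this becomes delicate, I would fall back on citing the established interior regularity theory, using $u\in L^\infty(\R^n)$ to guarantee the tail condition.
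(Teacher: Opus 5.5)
Your proof is correct, but both halves of your decomposition differ from the paper's.

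The paper's route is as follows.
\begin{itemize}
\item For the source, it solves the Dirichlet problem $(-\Delta)^s w=g$ in $B_R(x_0)$ with $w=0$ outside, and quotes Ros-Oton--Serra to get $w\in C^s(\mathbb R^n)$.
\item For the remainder $h=u-w$, it mollifies and applies Silvestre's interior Hölder estimate. That estimate is uniform in $\varepsilon$ and controlled by $\|h\|_{L^\infty(\mathbb R^n)}$, and the paper concludes by Arzelà--Ascoli.
\end{itemize}

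You instead use the Riesz potential $I_{2s}(\eta g)$, which needs only $n>2s$; this holds under the paper's standing assumption $n\ge 2$. You then represent $h$ through the explicit Poisson kernel of $B_{5r/4}$ via the maximum principle. This even shows that $h$ agrees a.e.\ with a $C^\infty$ function on $B_r(x_0)$.

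What each route buys:
\begin{itemize}
\item Your route is more explicit and yields more regularity. It relies on formulas special to the pure fractional Laplacian, and on the classical fact that the Poisson integral of bounded continuous exterior data is continuous up to $\partial B_{5r/4}$. You need that fact so that a positive maximum of $h_\varepsilon-P[h_\varepsilon]$ is attained inside the ball; state it explicitly.
\item The paper's Krylov--Safonov-type citation is heavier, but it does not depend on explicit kernels. Its ball Dirichlet problem also keeps $w\in H^s(\mathbb R^n)$.
\end{itemize}

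Three details would tighten your write-up.
\begin{itemize}
\item Your $w$ need not lie in $L^2(\mathbb R^n)$ when $n\le 4s$. So read $(-\Delta)^s w=\eta g$ as $\int w\,(-\Delta)^s\varphi\,dx=\int \eta g\,\varphi\,dx$. This agrees with $\mathcal E_s(u,\varphi)$ for $u\in H^s(\mathbb R^n)$ and makes the subtraction $h=u-w$ legitimate.
\item $w$ is continuous and decays, hence is globally bounded. So $h\in L^\infty(\mathbb R^n)$, and $|h_\varepsilon|\le\|h\|_{L^\infty}$ gives a single dominating function for the limit $\varepsilon\to0$.
\item The Poisson kernel is not bounded near $\partial B_{5r/4}$. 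However, for $x\in B_r(x_0)$ its $x$-derivatives are dominated by $C\,(|y-x_0|^2-(5r/4)^2)^{-s}|y-x_0|^{-n}$. This is integrable on $\{|y-x_0|>5r/4\}$ and suffices to differentiate under the integral.
\end{itemize}
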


\begin{proof}
It is enough to prove continuity in an arbitrary ball
$B_R(x_0)\Subset\Omega$.  Let $w\in H^s(\mathbb R^n)$ be the weak solution of
\[
    \begin{cases}
         (-\Delta)^s w=g\quad&\text{in }B_R(x_0),\\
         w=0\quad&\text{in }\mathbb R^n\setminus B_R(x_0).
    \end{cases}
\]
Since $g\in L^\infty(B_R(x_0))$ and the ball is smooth, the Dirichlet
regularity theorem of Ros-Oton and Serra
\cite[Proposition 1.1]{ROS-Dirichlet} gives
$w\in C^s(\mathbb R^n)$. 

Set $h=u-w$.  Then $h\in H^s(\mathbb R^n)\cap L^\infty(\mathbb R^n)$ and
\[
  (-\Delta)^s h=0\quad\text{in }\mathcal D'(B_R(x_0)).
\]
Let $B_{2r}(x_1)\Subset B_R(x_0)$ and let $\rho_\varepsilon$ be a standard
mollifier with sufficiently small $\eps>0$.
Set $h_\varepsilon=\rho_\varepsilon*h$. Then $h_\varepsilon$ is smooth and
bounded, with
\[
  \|h_\varepsilon\|_{L^\infty(\mathbb R^n)}
  \le \|h\|_{L^\infty(\mathbb R^n)}.
\]
Moreover,
\[
  (-\Delta)^s h_\varepsilon=0
  \quad\text{pointwise in }B_{2r}(x_1).
\]
Indeed, $(-\Delta)^s$ commutes with convolution in the distributional sense,
and the mollification of a test function supported in $B_{2r}(x_1)$ remains
supported in $B_R(x_0)$ for such $\varepsilon$.

By Silvestre's \cite[Theorem~5.1]{Silvestre-Holder}, together
with the verification of the fractional Laplacian kernel in
\cite[Section~3.1]{Silvestre-Holder}, there exist $\alpha\in(0,1)$ and
$C>0$, depending only on $n$ and $s$, such that
\[
  \|h_\varepsilon\|_{C^\alpha(B_r(x_1))}
  \le
  C r^{-\alpha}\|h\|_{L^\infty(\mathbb R^n)}.
\]
The estimate is uniform in $\varepsilon$. Since
$h_\varepsilon\to h$ in $L^2(B_R(x_0))$, the
Arzelà--Ascoli theorem and uniqueness of the $L^2$ limit show
that $h$ has a Hölder continuous representative in $B_r(x_1)$.

Since $B_{2r}(x_1)\Subset B_R(x_0)$ was arbitrary, $h$ is locally Hölder
continuous in $B_R(x_0)$.  As $w\in C^s(\mathbb R^n)$, it follows that
$u=w+h$ is continuous in $B_R(x_0)$.
\end{proof}

\begin{proposition}[A sufficient condition for openness]
\label{prop:open-noncontact}
Let $\Omega\subset\mathbb R^n$ be an open bounded set and $0<s<1$. Let
$q\in L^\infty(\Omega)$ satisfy $q\ge0$ a.e. in $\Omega$. Let
\[
  f\in H^s(\mathbb R^n)\cap L^\infty(\mathbb R^n),\qquad
  \psi\in C(\Omega)\cap L^\infty(\Omega).
\]
Assume that there is a compatible smooth representative $\Psi\in H^s(\R^n)\cap L^\infty (\R^n)$ of the data $(f,\psi)$, that is, the conditions \eqref{eq: representation of f and psi by common function}--\eqref{eq: schrödinger source } hold. Furthermore, suppose that $\K_{f,\psi}$ is nonempty and let $u$ be the corresponding
obstacle solution.

Then
the obstacle solution $u$ has a continuous representative in $\Omega$.
Consequently
\[
  D=\{x\in\Omega:\ u(x)>\psi(x)\}
\]
is an open subset of $\Omega$.  In this case the equation
$((-\Delta)^s+q)u=0$ holds in $D$ in the usual distributional sense.
\end{proposition}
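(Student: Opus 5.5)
The plan is to combine the Lewy--Stampacchia bound (Lemma~\ref{lem:Lewy--Stampacchia}) with the local continuity result (Lemma~\ref{lem:local-continuity}).

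\emph{Step 1: $u$ solves a fractional Poisson equation with bounded right-hand side.} By Lemma~\ref{lem:Lewy--Stampacchia}, the multiplier $\lambda=L_qu$ is represented by a function $\rho\in L^\infty(\Omega)$ with $0\le\rho\le(L_q\Psi)_+$. Hence, for $\varphi\in C_c^\infty(\Omega)$,
\[
  \langle(-\Delta)^s u,\varphi\rangle=\mathcal E_s(u,\varphi)=\int_\Omega(\rho-qu)\varphi\,dx .
\]
To see that $g:=\rho-qu$ lies in $L^\infty(\Omega)$, it suffices to show $u\in L^\infty(\R^n)$. This boundedness is also needed later as a hypothesis of Lemma~\ref{lem:local-continuity}.

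\emph{Step 2: $u\in L^\infty(\R^n)$.} I will prove this by truncation against the minimizer. Set $M:=\max\{\|f\|_{L^\infty},\|\Psi\|_{L^\infty},0\}$ and compare $u$ with $v:=\min(u,M)$.
\begin{itemize}
  \item \emph{Exterior datum.} Outside $\Omega$ we have $u=f\le M$, so $u-v=(u-M)_+$ vanishes q.e.\ off $\Omega$. By Lemma~\ref{lem:lattice}, $(u-M)_+\in\Htil(\Omega)$, hence $v-f\in\Htil(\Omega)$.
  \item \emph{Obstacle constraint.} Since $\psi=\Psi\le M$ q.e.\ in $\Omega$, we get $v\ge\psi$ q.e. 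Thus $v\in\K_{f,\psi}$.
  \item \emph{Energy comparison.} Testing \eqref{eq: variational inequality} with $v$ gives $\Bq(u,(u-M)_+)\le0$. On the other hand, \eqref{eq: lattice energy inequalities} applied to $u-M$ (using that constants have zero energy, so $\mathcal E_s(u,\cdot)=\mathcal E_s(u-M,\cdot)$), together with $q\ge0$ and $u(u-M)_+\ge0$, gives
  \[
    \Bq(u,(u-M)_+)\ge\mathcal E_s((u-M)_+,(u-M)_+).
  \]
  \item \emph{Conclusion.} Coercivity \eqref{eq: coercivity estimate} forces $(u-M)_+=0$.
\end{itemize}
The symmetric argument uses $v=\max(u,-M)$. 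Lowering $u$ up to $-M$ keeps $v\ge u\ge\psi$, and the exterior condition holds because $f\ge-M$. This gives $(u+M)_-=0$. Hence $|u|\le M$, and $g\in L^\infty(\Omega)$.

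\emph{Step 3: continuity and openness.} Lemma~\ref{lem:local-continuity} now gives a continuous representative of $u$ in $\Omega$. This representative coincides q.e.\ with the quasicontinuous one, since a continuous function is quasicontinuous and Remark~\ref{rem: Uniqueness of quasicontinuous representations} applies. Therefore the set $D$ computed with it agrees with the quasiopen $D$ up to a set of capacity zero. Since $\psi\in C(\Omega)$, the function $u-\psi$ is continuous on $\Omega$, and so $D=\{u-\psi>0\}$ is open.

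\emph{Step 4: the equation in $D$.} For $\varphi\in C_c^\infty(D)$, the support of $\varphi$ is contained in $D$, so $\varphi\in\Htil(D)$. Corollary~\ref{lem:equation-noncontact} then gives $\Bq(u,\varphi)=0$. Since $\Bq(u,\varphi)=\langle(-\Delta)^su+qu,\varphi\rangle$, this is exactly the statement that $((-\Delta)^s+q)u=0$ in $\mathcal D'(D)$.

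\emph{Main obstacle.} I expect Step 2 to be the delicate point: the $L^\infty$ bound must be justified by truncation inside the admissible class, checking both the capacitary exterior condition and the obstacle constraint. The rest follows directly from the earlier lemmas.
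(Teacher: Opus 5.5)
Your proposal is correct and follows essentially the same route as the paper: an $L^\infty$ bound via the truncated competitors $\min\{u,M\}$ and $\max\{u,-M\}$, then Lemma~\ref{lem:Lewy--Stampacchia} to make $(-\Delta)^s u=\lambda-qu$ bounded in $\Omega$, then Lemma~\ref{lem:local-continuity} for continuity and hence openness of $D$. The only differences are minor: the paper gets the distributional equation in $D$ by testing the variational inequality directly with $u\pm\varepsilon\varphi$ (using $u-\psi\ge\delta>0$ on $\supp\varphi$) instead of citing Corollary~\ref{lem:equation-noncontact}, and it proves the quasieverywhere bound $|f|\le\|f\|_{L^\infty}$ by mollification, which your exterior step uses without justification.
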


\begin{proof}
We first establish the $L^\infty$ bound of the solution $u$, which is needed to invoke the regularity result Lemma~\ref{lem:local-continuity}.

Let $ M\vcentcolon = \max\{\|f\|_{L^\infty(\mathbb R^n)},\|\psi\|_{L^\infty(\Omega)}\}$. As usual, we use quasicontinuous representatives. Since $f\in H^s(\mathbb R^n)\cap L^\infty(\mathbb R^n)$, the bound
$|f|\le \|f\|_{L^\infty(\mathbb R^n)}$ holds q.e. in $\R^n$. Indeed, let
$\rho_\varepsilon$ be a standard mollifier. Then
$\rho_\varepsilon*f\to f$ strongly in $H^s(\mathbb R^n)$ and
\[
  |\rho_\varepsilon*f|\le \|f\|_{L^\infty(\mathbb R^n)}
  \qquad\text{everywhere in }\mathbb R^n.
\]
By Lemma~\ref{lemma: Stability of quasicontinuous representatives}, there is
a sequence $\varepsilon_j\downarrow0$ such that
$\rho_{\varepsilon_j}*f\to f$ q.e. in $\R^n$.  Passing to the limit along this sequence gives
\[
  |f|\le \|f\|_{L^\infty(\mathbb R^n)}\qquad\text{q.e. in }\mathbb R^n.
\]
The truncation $w=(u-M)_+$ belongs to
$H^s(\mathbb R^n)$ by Lemma \ref{lem:lattice}, because $M\ge0$.  Moreover
$u-f\in\Htil(\Omega)$ and \cite[Theorem~10.1.1]{Adams-Hedberg} give $u=f$ q.e. on $\mathbb R^n\setminus\Omega$, and
therefore $w=0$ q.e. on $\mathbb R^n\setminus\Omega$. Now, \cite[Theorem~10.1.1]{Adams-Hedberg} yields $w\in\Htil(\Omega)$.  The function
$u-w=\min\{u,M\}$ is an admissible function for the obstacle problem because $M\ge\psi$ q.e.~in $\Omega$. Testing the variational
inequality \eqref{eq: variational inequality} with the competitor $u-w$ gives
\[
  \Bq(u,-w)\ge0,
  \qquad\text{hence}\qquad
  \Bq(u,w)\le0.
\]
On the set where $w>0$ one has $u=M+w$, while $w=0$ elsewhere.  Hence,
using $q\ge0$ and $M\ge0$,
\begin{equation}
\label{eq: potential lower bound pos}
  \int_\Omega q u w\,dx
  =
  \int_{\{w>0\}} q(M+w)w\,dx
  \ge
  \int_\Omega q w^2\,dx .
\end{equation}
Together with 
\begin{equation}
\label{eq: fractional energy estimate below}
\begin{split}
    &(u(x)-u(y))(w(x)-w(y))=((u-M)(x)-(u-M)(y))(w(x)-w(y))\\
    &=|w(x)-w(y)|^2-((u-M)_{-}(x)-(u-M)_{-}(y))(w(x)-w(y))\\
    &=|w(x)-w(y)|^2+(u-M)_{-}(x)w(y)+(u-M)_{-}(y)w(x)\\
    &\geq |w(x)-w(y)|^2,
\end{split}
\end{equation}
this yields
\[
  \Bq(w,w)\le \Bq(u,w)\le0.
\]
Coercivity gives $w=0$, and hence
\[
    u\leq M \quad\text{q.e.~in }\R^n.
\]

For the lower bound set $z=(-M-u)_+$.  Then
$z\in H^s(\mathbb R^n)$ by Lemma \ref{lem:lattice}, because $-M\le0$.
Moreover $z=0$ q.e. on $\mathbb R^n\setminus\Omega$, since $u=f\ge -M$ there
q.e.; hence $z\in\Htil(\Omega)$.  The competitor
$u+z=\max\{u,-M\}$ is admissible since it is q.e. above $u$.  The
variational inequality gives $\Bq(u,z)\ge0$.  On $\{z>0\}$ one has
$u=-M-z$, while $z=0$ elsewhere.  Thus, using again $q\ge0$ and $M\ge0$,
\[
  \int_\Omega q u z\,dx
  =
  -\int_{\{z>0\}} q(M+z)z\,dx
  \le
  -\int_\Omega q z^2\,dx .
\]
Together with 
\[
\begin{split}
    (u(x)-u(y))(z(x)-z(y))&=-((-M-u)(x)-(-M-u)(y))(z(x)-z(y))\\
    &\leq -|z(x)-z(y)|^2,
\end{split}
\]
see \eqref{eq: fractional energy estimate below}, this gives
\begin{equation}
\label{eq: bilinear form estimate lower}
  \Bq(u,z)\le -\Bq(z,z)\le0.
\end{equation}
Thus $\Bq(z,z)=0$, and coercivity gives $z=0$.  Hence
$u\in L^\infty(\mathbb R^n)$ with
\[
    |u|\leq M\quad\text{q.e.~in }\R^n.
\]

Now by Lemma \ref{lem:Lewy--Stampacchia}, the obstacle multiplier
$\lambda=L_q u$ belongs to $L^\infty(\Omega)$.  Hence
\[
  (-\Delta)^s u=\lambda-qu
\]
is represented by an $L^\infty(\Omega)$ function.  Thus Lemma~\ref{lem:local-continuity} gives a continuous representative of $u$ in
$\Omega$.

Now $u-\psi$ is continuous, and therefore its positivity set is open.  If
$\varphi\in C_c^\infty(D)$, compactness of $\supp\varphi$ and continuity give
\[
  u-\psi\ge \delta>0
  \qquad\text{on }\supp\varphi
\]
for some $\delta$.  For all sufficiently small $|\varepsilon|$, the
competitors $u\pm\varepsilon\varphi$ remain admissible.  Testing the
variational inequality \eqref{eq: variational inequality} with both signs yields $\Bq(u,\varphi)=0$, which is
exactly the distributional form of $L_qu \vcentcolon =((-\Delta)^s+q)u=0$ in $D$.
\end{proof}

\begin{remark}[Regularity input from Lewy--Stampacchia]
\label{rem:book-openness}
Proposition \ref{prop:open-noncontact} does not require continuity of $u$ as
an independent hypothesis.  The assumption $q\ge0$, together with
$f\in L^\infty$ and $\psi\in L^\infty$, gives the bound
$u\in L^\infty$ by comparison.  The compatible smooth representative
$\Psi$ then lets us apply the Lewy--Stampacchia inequality, which upgrades
the multiplier $L_q u$ from a nonnegative distribution to an $L^\infty$
function.
Thus $(-\Delta)^s u=L_q u-qu$ has a bounded right-hand side in $\Omega$, and
linear fractional regularity gives continuity. 
\end{remark}

\begin{remark}[Why smooth compatibility is imposed]
\label{rem:cz-obstacle}
The compatibility condition $\Psi-f\in\Htil(\Omega)$ is not cosmetic.  The
Lewy--Stampacchia upper bound compares the multiplier $\lambda$ with $L_q\Psi$, where
$\Psi$ is the obstacle in the same affine exterior class as the admissible
functions.  A global smooth extension of $\psi$ supported in a neighborhood
of $\Omega$ is therefore sufficient only for exterior data that agree with
that extension near $\partial\Omega$ in the $H^s$ trace sense.  A simple
uniform sufficient condition for the scaled exterior data used below is
$\psi\in C_c^\infty(\Omega)$ and $f_0\in C_c^\infty(W_1)$ with
$W_1\Subset\Omega_e\vcentcolon = \R^n\setminus \overline{\Omega}$; then the piecewise function equal to $\psi$ in
$\Omega$ and to $t f_0$ in $\Omega_e$ is smooth across a neighborhood of
$\partial\Omega$ and is an admissible compatible representative.
\end{remark}

\subsection{Dirichlet to Neumann map}

\begin{lemma}[The exterior DN map]
\label{lem:dn-map}
Let $\Omega\subset\mathbb R^n$ be an open bounded set and $0<s<1$. Let
$q\in L^\infty(\Omega)$ satisfy $q\ge0$ a.e. in $\Omega$. Let
$f\in H^s(\mathbb R^n)$ and let $\psi$ be quasicontinuous in $\Omega$.
Assume that $\K_{f,\psi}$ is nonempty, let $u$ be the corresponding obstacle
solution, and let $W\subset\Omega_e$ be open. Then
the formula
\[
  \Lambda_{q,\psi}(f)|_W \vcentcolon =(-\Delta)^s u|_W
\]
defines an element of $\mathcal D'(W)$.  Equivalently, for
$\phi\in C_c^\infty(W)$,
\[
  \langle \Lambda_{q,\psi}(f),\phi\rangle
  =
  \langle (-\Delta)^s u,\phi\rangle
  =
  \mathcal E_s(u,\phi).
\]
If two representatives of $f|_{\R^n\setminus\Omega}$ differ by an element of $\Htil(\Omega)$, then the
corresponding admissible sets and obstacle solution are the same.  Thus the
map depends only on the usual exterior Dirichlet class
$H^s(\mathbb R^n)/\Htil(\Omega)$.
\end{lemma}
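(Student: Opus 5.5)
The plan is to check the two assertions of Lemma~\ref{lem:dn-map} in turn: first that the formula defines a distribution on $W$, then that the map depends only on the exterior Dirichlet class of $f$.

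\textbf{Well-definedness.} Since $u\in\K_{f,\psi}\subset H^s(\R^n)$, the fractional Laplacian $(-\Delta)^s u$ is an element of $H^{-s}(\R^n)$, defined through the Fourier multiplier $|\xi|^{2s}$. For $\phi\in C_c^\infty(W)$ we then have
\[
  \langle(-\Delta)^s u,\phi\rangle=\mathcal E_s(u,\phi),
\]
by Plancherel together with the normalization of $c_{n,s}$. This is the same identity recalled in Section~\ref{subsec: functional-analytic setup}, extended by density from $C_c^\infty$ to $H^s(\R^n)$ in the first slot, using continuity of $\mathcal E_s$ on $H^s\times H^s$. From the bound
\[
  |\mathcal E_s(u,\phi)|\le C\|u\|_{H^s}\|\phi\|_{H^s},
\]
the map $\phi\mapsto\mathcal E_s(u,\phi)$ is linear and continuous in the $C_c^\infty(W)$ topology. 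Here one uses that $\|\phi\|_{H^s}\le C\|\phi\|_{H^1}$, which is controlled by $C^1$ seminorms on a fixed compact subset of $W$. Hence the restriction is an element of $\mathcal D'(W)$. Note that $q$ does not enter, because $\supp\phi\subset W\subset\Omega_e$ and $q$ lives on $\Omega$; this is why the DN map can be written with $\mathcal E_s$ alone.

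\textbf{Dependence only on the exterior class.} Suppose $f_1-f_2\in\Htil(\Omega)$. Since $\Htil(\Omega)$ is a linear subspace, $v-f_1\in\Htil(\Omega)$ holds if and only if $v-f_2\in\Htil(\Omega)$. The obstacle constraint $v\ge\psi$ q.e. in $\Omega$ does not involve $f$ at all. Therefore $\K_{f_1,\psi}=\K_{f_2,\psi}$ as sets.

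The variational inequality \eqref{eq: variational inequality}, equivalently the minimization of $J$, is formulated purely over this set. By the uniqueness part of Theorem~\ref{thm:obstacle-wp}, the two obstacle solutions therefore coincide. Consequently $(-\Delta)^s u|_W$ is the same for both data, and the map descends to $H^s(\R^n)/\Htil(\Omega)$.

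The only point requiring care is the identification $\langle(-\Delta)^s u,\phi\rangle=\mathcal E_s(u,\phi)$ for a general $u\in H^s(\R^n)$ rather than for smooth $u$. I would handle it by approximating $u$ in $H^s(\R^n)$ by $u_k\in C_c^\infty(\R^n)$ and passing to the limit on both sides, using that each side is continuous in $u$ for the $H^s$ norm.
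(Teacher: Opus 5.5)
Your proposal is correct and follows essentially the same argument as the paper. For $u\in H^s(\R^n)$, the pairing $\phi\mapsto\mathcal E_s(u,\phi)$ defines $(-\Delta)^s u\in H^{-s}(\R^n)$, and restricting it to test functions supported in $W$ gives a distribution. For the second claim, $f_1-f_2\in\Htil(\Omega)$ gives $\K_{f_1,\psi}=\K_{f_2,\psi}$, so the uniqueness in Theorem~\ref{thm:obstacle-wp} yields the same state.

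The only difference is cosmetic. The paper takes $\langle(-\Delta)^s u,\phi\rangle=\mathcal E_s(u,\phi)$ as the definition on $H^s(\R^n)$, whereas you start from the Fourier multiplier and recover this identity via Plancherel and density.
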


\begin{proof}
For $u\in H^s(\mathbb R^n)$, the fractional Laplacian is defined as an
element of $H^{-s}(\mathbb R^n)$ by
\[
  \langle(-\Delta)^s u,\phi\rangle=\mathcal E_s(u,\phi),
  \qquad \phi\in H^s(\mathbb R^n).
\]
Since $C_c^\infty(W)\subset H^s(\mathbb R^n)$, restriction to $W$ is a
well-defined distribution.  If $f_1-f_2\in\Htil(\Omega)$, then the affine conditions
$v-f_1\in\Htil(\Omega)$ and $v-f_2\in\Htil(\Omega)$ are equivalent.  The
constraint $v\ge\psi$ is unchanged, and uniqueness in Theorem
\ref{thm:obstacle-wp} gives the same state.
\end{proof}

\section{Unique continuation theorems}
\label{sec: UCP}

We use two standard unique continuation facts for the fractional Laplacian.
The first is the UCP of Ghosh, Salo, and Uhlmann \cite{GSU-fractional}.
The second is the measurable UCP of the fractional Schrödinger operator used in the
single-measurement reconstruction of Ghosh, Rüland, Salo, and Uhlmann \cite{GRSU-single}.

\begin{theorem}[UCP, {\cite[Theorem 1.2]{GSU-fractional}}]
\label{thm:antilocality}
Let $n\ge 1$ and $0<s<1$.  If $v\in H^{-r}(\mathbb R^n)$ for some
$r\in\mathbb R$ satisfies
\[
  (-\Delta)^s v=v=0
  \quad\text{in a nonempty open set }W\subset\R^n,
\]
then $v\equiv0$ in $\R^n$.
\end{theorem}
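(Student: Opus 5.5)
The statement is the Ghosh--Salo--Uhlmann antilocality theorem for distributions $v\in H^{-r}(\R^n)$. I would reduce it to the strong unique continuation property for the Caffarelli--Silvestre extension, with a separate approximation step to pass from Sobolev to arbitrary regularity.

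\emph{Step 1: mollification.} Since $(-\Delta)^s$ commutes with convolution, I replace $v$ by $v_\varepsilon=\rho_\varepsilon*v$. For a fixed ball $B\Subset W$ and small $\varepsilon$, the function $v_\varepsilon$ satisfies $v_\varepsilon=(-\Delta)^s v_\varepsilon=0$ on $B$. It also lies in $H^t(\R^n)$ for every $t$. If every $v_\varepsilon$ vanishes identically, then $v=\lim v_\varepsilon=0$ in $\mathcal S'$. So I may assume $v\in H^{t}(\R^n)$ with $t$ large, and that $W$ is a ball.

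\emph{Step 2: the extension.} Let $V(x,y)$, for $y>0$, be the Caffarelli--Silvestre extension of $v$. It solves $\operatorname{div}(y^{1-2s}\nabla V)=0$ in $\R^{n+1}_+$ with $V(\cdot,0)=v$. Up to a constant factor,
\[
  -\lim_{y\to0}y^{1-2s}\partial_yV=(-\Delta)^s v .
\]
The hypothesis therefore says that both the Dirichlet and the weighted Neumann data of $V$ vanish on $W\times\{0\}$. Reflecting $V$ evenly across $y=0$ gives a weak solution of the degenerate elliptic equation $\operatorname{div}(|y|^{1-2s}\nabla \tilde V)=0$ in a full neighbourhood of $W\times\{0\}$. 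The vanishing Neumann datum is exactly what is needed for the reflection to be a solution across $y=0$. The regularity from Step 1 makes the traces and the integration by parts legitimate.

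\emph{Step 3: unique continuation for the extension.} This is the main obstacle. Since $\tilde V$ vanishes on $W\times\{0\}$ and $y^{1-2s}\partial_y V$ vanishes there too, I would show that $\tilde V$ vanishes to infinite order at a point $(x_0,0)$ with $x_0\in W$. This can be done by expanding in $y$ via the ODE structure of the equation, or by a boundary Carleman estimate. I would then apply a Carleman estimate with the $A_2$ weight $|y|^{1-2s}$, or an Almgren-type frequency monotonicity argument as in Fall--Felli and Rüland, to conclude $\tilde V\equiv0$ near $(x_0,0)$. Weak UCP for this elliptic equation, which is smooth in $y>0$, then propagates the vanishing to all of $\R^{n+1}_+$. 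In particular $v=V(\cdot,0)=0$.

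Alternatively, one may simply cite \cite[Theorem 1.2]{GSU-fractional}, where exactly this scheme is carried out. The key ingredient there is the Carleman estimate of Rüland for the weighted operator.
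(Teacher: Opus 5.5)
Your proposal is correct and takes essentially the same route as the paper, which gives no independent proof and simply cites \cite[Theorem 1.2]{GSU-fractional}. The argument there is the scheme you sketch: mollify to reduce to smooth $v$, pass to the Caffarelli--Silvestre extension whose Dirichlet and weighted Neumann data vanish on $W\times\{0\}$, apply boundary unique continuation for $\operatorname{div}(|y|^{1-2s}\nabla\,\cdot\,)$ via R\"uland's Carleman estimate, and then use interior unique continuation in $\R^{n+1}_+$.
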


\begin{theorem}[Measurable UCP, {\cite[Theorem 3]{GRSU-single}}]
\label{thm:meas-ucp}
Let $\Omega\subset\mathbb R^n$, $n\ge1$, be an open set,
$s\in[1/4,1)$, and let $q\in L^\infty(\Omega)$.  If $v\in H^s(\mathbb R^n)$
satisfies
\[
  ((-\Delta)^s+q)v=0\quad\text{in }\Omega
\]
and $v$ vanishes on a subset of $\Omega$ of positive Lebesgue measure, then
$v\equiv0$ in $\mathbb R^n$.
\end{theorem}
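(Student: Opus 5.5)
The plan is to reduce the measurable statement to the strong unique continuation property for the fractional Schrödinger equation, and then to close the argument with Theorem~\ref{thm:antilocality}. We realize $(-\Delta)^s$ through the Caffarelli--Silvestre extension. Let $\tilde v(x,y)$, $y>0$, be the $s$-harmonic extension of $v$, which solves $\nabla\cdot(y^{1-2s}\nabla\tilde v)=0$ in $\R^{n+1}_+$. Then the equation $((-\Delta)^s+q)v=0$ in $\Omega$ becomes the Robin-type boundary condition
\[
  -c_s\lim_{y\to0}y^{1-2s}\partial_y\tilde v=-qv \quad\text{on }\Omega\times\{0\}.
\]
Let $E\subset\Omega$ with $\mathcal L^n(E)>0$ and $v=0$ on $E$. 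We fix a Lebesgue density point $x_0\in E$ and work in small half-balls $B_r^+(x_0,0)$.

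The first step is to show that $\tilde v$ vanishes to infinite order at $(x_0,0)$. For this I would prove a boundary Carleman estimate for the weighted operator with a weight such as $e^{\tau\phi}$, $\phi=-\log|(x,y)-(x_0,0)|$ (suitably convexified). The estimate should keep an explicit boundary term measuring $\tilde v(\cdot,0)$, so that $v=0$ on $E$ is fed in as a smallness condition on a set of nearly full density in each small ball. The idea, following the doubling-type arguments for measurable UCP in the local case, is the following. The density of $E$ at $x_0$ makes the $L^2$ mass of $v$ on $B_r(x_0)$ controlled by its mass on $B_r(x_0)\setminus E$, whose relative measure tends to $0$. Combined with a Sobolev/trace bound and a doubling inequality coming from the Carleman estimate, this forces $\int_{B_r^+}y^{1-2s}\tilde v^2$ to decay faster than any power of $r$.

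The second step uses the strong UCP for the extended equation with the boundary potential, via a second Carleman estimate or the Almgren frequency function approach of Fall--Felli/Rüland. It gives $\tilde v\equiv0$ near $(x_0,0)$, hence $v=0$ on an open ball $B\subset\Omega$. There $(-\Delta)^s v=-qv=0$ as well, so Theorem~\ref{thm:antilocality} yields $v\equiv0$ in $\R^n$.

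The main obstacle is the Carleman estimate, more precisely absorbing the boundary potential term $\int e^{2\tau\phi}qv\,\tilde v$. This term is only bounded by $\|q\|_{L^\infty}$ and must be dominated by the gradient terms on the left side through a trace inequality of order $H^{1/2}$-type in the weighted space. Balancing the powers of $\tau$ appearing in the boundary terms against the weight $y^{1-2s}$ works exactly when $s\ge 1/4$. I expect this is the source of the restriction $s\in[1/4,1)$. I would attempt the estimate first in the model case $q\equiv 0$ with the boundary contribution kept explicit. I would then treat $qv$ perturbatively, using the interpolation bound
\[
  \tau^{1-2s}\|e^{\tau\phi}v\|_{L^2}^2\lesssim \tau^{\alpha}\|y^{\frac{1-2s}{2}}\nabla(e^{\tau\phi}\tilde v)\|^2+\dots,
\]
and check the range of $s$ for which the exponents permit absorption for large $\tau$.
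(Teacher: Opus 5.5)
The paper does not prove this statement. It is imported from \cite[Theorem~3]{GRSU-single}, and the only thing the paper records about the proof is the subsequent remark that $s\ge 1/4$ is needed to absorb the boundary term in a Carleman estimate. Your outline (Caffarelli--Silvestre extension, boundary Carleman estimate, density point, antilocality) is consistent with that source. As a proof, however, it has a genuine gap, and the gap is essentially the whole theorem. What is missing is a Carleman estimate for $\nabla\cdot(y^{1-2s}\nabla\,\cdot)$ near $(x_0,0)$, together with the boundary three-balls/doubling inequality it yields, in which the Robin term coming from $qv$, with $q$ merely in $L^\infty$, is absorbed for all large $\tau$ whenever $s\in[1/4,1)$. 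You only announce this estimate. Your interpolation display leaves $\alpha$ and the omitted terms unspecified, and those are exactly what decide the admissible range. So ``works exactly when $s\ge 1/4$'' is a guess, not a derivation, and no other step of your outline uses $s\ge1/4$. You need to either carry this estimate out or cite it, as the paper does.

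The alternative you offer for the second step is not available in this setting. The Almgren frequency approach of Fall--Felli rests on a Pohozaev identity in which the potential is differentiated, so it requires $q\in C^1$. That is precisely the class the paper names as the substitute for $s<1/4$ \cite{Fall-Felli-UCP}. For $q\in L^\infty$, strong unique continuation has to come from the same boundary Carleman estimate.

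The density step is not yet an argument either. Since $v=0$ on $E$, your first sentence there is a tautology. Hölder plus a trace/Sobolev bound only shows that $\int_{B_r'}v^2$ is small compared with $r^{2s-2}H(2r)$, where $H(r)=\int_{B_r^+}y^{1-2s}\tilde v^2$. That estimate goes from bulk to boundary, whereas you need to go back from the boundary to the bulk. Moreover, a doubling inequality $H(2r)\le C_0H(r)$ means finite vanishing order. It therefore cannot ``force'' decay faster than every power, except inside a contradiction argument.

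One clean way to close this step is the following.
\begin{enumerate}[(i)]
\item If $\tilde v\not\equiv0$ near $(x_0,0)$, the Carleman estimate gives doubling for $r<r_0$.
\item The rescalings $\tilde v_r(z)=(r^{n+2-2s}/H(r))^{1/2}\,\tilde v((x_0,0)+rz)$ are normalized in $B_1^+$ and bounded in $B_2^+$. They solve the Robin problem with potential $r^{2s}q(x_0+r\,\cdot)\to0$. Their traces vanish on $(E-x_0)/r$, whose complement in $B_1'$ has measure tending to $0$.
\item Caccioppoli (absorbing the boundary term by trace interpolation) and compactness of the weighted trace give a limit $\tilde v_\infty$ with $\int_{B_1^+}y^{1-2s}\tilde v_\infty^2=1$. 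It has zero weighted Neumann data and zero trace a.e.\ on $B_1'$.
\item This contradicts unique continuation for the potential-free extension with vanishing Cauchy data, which holds for every $s\in(0,1)$.
\end{enumerate}
Hence $v=0$ on a ball, and your separate infinite-order/strong-UCP step becomes unnecessary. Theorem~\ref{thm:antilocality} then finishes the proof as you say.
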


\begin{remark}
Theorem \ref{thm:meas-ucp} is the $L^\infty$ version needed below. Since it
is stated for an arbitrary open set, we can apply it with
$\Omega$ replaced by an open noncontact sets $D\subset\Omega$ and with
$q$ replaced by $q|_D$. For $s\in(0,1/4)$ this $L^\infty$ measurable UCP is not proved in \cite{GRSU-single}. The obstruction in their
argument is the absorption of the boundary term in the Carleman estimate;
see \cite[Remark 5.6]{GRSU-single}. In the range $s\in(0,1/4)$ one may
instead either use the continuous-potential argument in the proof of
\cite[Theorem 1]{GRSU-single}, which only requires weak unique continuation,
or assume a potential class for which measurable UCP is
known. A concrete such class is
$q\in C^1(\Omega)$, for which the measurable unique continuation property is
proved by Fall and Felli \cite{Fall-Felli-UCP}; \cite[Remark 5.6]{GRSU-single}
also notes that the Carleman approach would only use $C^1$ regularity in the
radial directions.
\end{remark}

\section{Single-state recovery}
\label{sec: Single state}

In this section, we investigate injectivity properties of the DN data from a single measurement. 

\begin{theorem}[Recovery from one obstacle state]
\label{thm:one-state}
Let $\Omega\subset\mathbb R^n$ be an open bounded set, $0<s<1$, and let
$q_1,q_2\in L^\infty(\Omega)$ satisfy $q_j\ge 0$ a.e. in $\Omega$,
$j=1,2$.  Let $f\in H^s(\mathbb R^n)$ and let $\psi$ be quasicontinuous in
$\Omega$.  Assume that $\K_{f,\psi}$ is nonempty, and let
$u_j$ be the obstacle solution for exterior datum $f$, obstacle $\psi$, and
potential $q_j$.  Assume
\[
  \Lambda_{q_1,\psi}(f)|_W=\Lambda_{q_2,\psi}(f)|_W
\]
on a nonempty open set $W\subset\Omega_e$.  Then
\[
  u_1=u_2=:u \quad\text{in }\R^n.
\]
Consequently the contact and noncontact sets agree.  In the common noncontact
set
\[
  D=\{u>\psi\}
\]
one has
\[
  (q_1-q_2)u=0
  \quad\text{in the weak }\Htil(D)\text{ sense}.
\]
Assume additionally that, for one of the two potentials $q_j$, the explicit
hypotheses of Proposition \ref{prop:open-noncontact} hold; namely
$f\in H^s(\mathbb R^n)\cap L^\infty(\mathbb R^n)$ and
$\psi\in C(\Omega)\cap L^\infty(\Omega)$ with a compatible smooth
representative $\Psi\in H^s(\R^n)\cap L^{\infty}(\R^n)$ satisfying \eqref{eq: representation of f and psi by common function}--\eqref{eq: schrödinger source } with $q=q_j$.  Then
$D$ is open and the preceding identity holds in $\mathcal D'(D)$. If $s\geq 1/4$ and at least one of $f\not\equiv 0$ and $\psi \not\leq 0$ holds, then
\[
  q_1=q_2\quad\text{a.e. in }D.
\]
If $0<s<1/4$, at least one of $f\not\equiv0$ and $\psi \not\leq 0$ holds, and $q_1,q_2$ have continuous representatives in $D$, then
one has $q_1=q_2$ throughout $D$.
\end{theorem}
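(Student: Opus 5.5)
The plan is to first identify the two states, then subtract the equations on the common noncontact set, and finally apply unique continuation. Set $v:=u_1-u_2$. Both $u_j$ lie in $\K_{f,\psi}$, so $v\in\Htil(\Omega)$ and $v=0$ in $\Omega_e$, in particular on $W$. By Lemma~\ref{lem:dn-map}, equality of the measurements gives $\mathcal E_s(u_1,\phi)=\mathcal E_s(u_2,\phi)$ for all $\phi\in C_c^\infty(W)$, that is, $(-\Delta)^s v=0$ in $W$. Since $v\in H^s(\R^n)$, Theorem~\ref{thm:antilocality} yields $v\equiv0$. Hence $u_1=u_2=:u$ a.e., and by Remark~\ref{rem: Uniqueness of quasicontinuous representatives} also q.e. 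Consequently the quasiopen sets $\{u_j>\psi\}$ coincide up to capacity zero, and so do the contact sets.

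Next we subtract the equations. Corollary~\ref{lem:equation-noncontact}, applied for each $j$, gives $\mathcal B_{q_j}(u,\varphi)=0$ for all $\varphi\in\Htil(D)$. Since $\mathcal E_s(u,\varphi)$ is common to both, subtracting leaves $\int_\Omega (q_1-q_2)u\varphi\,dx=0$ for all $\varphi\in\Htil(D)$, which is the weak identity. Under the additional hypotheses for $q_j$, Proposition~\ref{prop:open-noncontact} shows that $u$ has a continuous representative in $\Omega$, so $D$ is open. Then $C_c^\infty(D)\subset\Htil(D)$, and the identity holds in $\mathcal D'(D)$. Because $(q_1-q_2)u\in L^2(D)$, we conclude $(q_1-q_2)u=0$ a.e. in $D$.

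To recover the potential for $s\ge1/4$, let $E:=\{x\in D: q_1(x)\ne q_2(x)\}$. On $E$ we have $u=0$ a.e. If $|E|>0$, then since $u$ solves $((-\Delta)^s+q_1)u=0$ in the open set $D$ and $u\in H^s(\R^n)$, Theorem~\ref{thm:meas-ucp} (applied on $D$ with $q_1|_D$) gives $u\equiv0$ in $\R^n$. This must be ruled out using nontriviality. If $f\not\equiv0$ as an exterior class, then $u=f$ on $\Omega_e$ contradicts $u\equiv0$. If $\psi\not\le0$, then $u\ge\psi$ q.e. in $\Omega$ contradicts $u\equiv0$ on the set $\{\psi>0\}$. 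For that last step I would use continuity of $\psi$ to get an open, hence positive-capacity, set where $\psi>0$. If $D=\emptyset$ the claim is vacuous. Therefore $|E|=0$.

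For $0<s<1/4$ with continuous $q_j$ on $D$, argue by contradiction. Suppose $q_1(x_0)\ne q_2(x_0)$ at some $x_0\in D$. By continuity they differ on a ball $B\subset D$, so $u=0$ a.e. on $B$. Moreover $(-\Delta)^s u=-q_1u=0$ in $B$. Theorem~\ref{thm:antilocality}, with $W$ replaced by $B$, gives $u\equiv0$, which yields the same contradiction as above. The main delicate point is not analytic but bookkeeping: stating precisely what ``$f\not\equiv0$'' means (nonzero exterior class, so that $u=f\ne0$ somewhere outside $\Omega$), and making sure that the exterior identification uses quasicontinuous representatives consistently.
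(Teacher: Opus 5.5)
Your proposal is correct and follows the paper's proof essentially step for step: exterior antilocality (Theorem~\ref{thm:antilocality}) for $u_1-u_2$, subtraction of the two noncontact-set identities from Corollary~\ref{lem:equation-noncontact}, then the measurable UCP on the open set $D$ when $s\ge 1/4$, and antilocality on a ball where the continuous potentials differ when $s<1/4$. Your added bookkeeping is more careful than the paper's one-line appeal to the nontriviality hypotheses. Reading $f\not\equiv0$ as a nonzero exterior class (i.e.\ $f\notin\Htil(\Omega)$) is genuinely needed: for $f\in\Htil(\Omega)\setminus\{0\}$ and $\psi\le0$ the state is $u=0$, and the conclusion can fail. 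Likewise, using continuity of $\psi$ to produce a positive-capacity set where $\psi>0$ is what makes the alternative $\psi\not\le0$ rigorous.
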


\begin{proof}
Set $v=u_1-u_2$.  Since the exterior data are the same, $v=0$ in
$\Omega_e$, hence in $W$.  Equality of the measured DN data gives
$(-\Delta)^s v=0$ in $W$.  Theorem \ref{thm:antilocality} implies
$v\equiv0$ in $\R^n$.

In $D=\{u>\psi\}$, Corollary~\ref{lem:equation-noncontact} applied to both
obstacle problems gives
\begin{equation}
\label{eq: fractional schroedinger single}
    ((-\Delta)^s+q_j)u=0
  \quad\text{in }D,\qquad j=1,2.
\end{equation}
Subtracting yields $(q_1-q_2)u=0$ in $D$.

From now on, after relabeling the potentials if necessary, Proposition~\ref{prop:open-noncontact} shows that $D$ is open and \eqref{eq: fractional schroedinger single} holds in the usual distributional sense for $j=1$. Assume first $s\geq 1/4$. We exclude positive-measure zero sets of a nontrivial state.
If $E\subset D$ has positive measure and $u=0$ a.e. on $E$, then $u$
satisfies
\[
  ((-\Delta)^s+q_1|_D)u=0\quad\text{in }D
\]
and vanishes on a positive-measure subset of $D$.  The measurable UCP, Theorem
\ref{thm:meas-ucp} applied with $D$ in place of $\Omega$, then gives $u\equiv0$ in $\R^n$, contrary to one of the hypotheses $f\not\equiv0$ or $\psi \not \leq 0$.
Hence $u\ne0$ a.e.~in $D$, and $(q_1-q_2)u=0$ implies $q_1=q_2$ a.e.~in
$D$.

If instead $s<1/4$ and $q_1,q_2$ are continuous in $D$, then suppose that
$(q_1-q_2)(x_0)\ne0$ for some $x_0\in D$. Then there is a nonempty open ball
$B\Subset D$ on which $q_1-q_2$ does not vanish.  Since
$(q_1-q_2)u=0$ in $D$, we have $u=0$ a.e. in $B$.  In $B$ the equation
$((-\Delta)^s+q_1)u=0$ then gives $(-\Delta)^s u=0$ in the distributional
sense.  Theorem \ref{thm:antilocality} applied in $B$ gives
$u\equiv0$ in $\R^n$, contradicting the nontriviality of $f$ or $\psi \not \leq 0$. Thus
$q_1=q_2$ throughout $D$.
\end{proof}

\begin{corollary}[Recovery on a union of exposed sets]
\label{cor:union}
Let $\Omega\subset\mathbb R^n$ be an open bounded set, $0<s<1$, and let
$q_1,q_2\in L^\infty(\Omega)$ satisfy $q_j\ge 0$ a.e. in $\Omega$,
$j=1,2$. Let $\psi$ be quasicontinuous in $\Omega$, suppose
$W\subset\Omega_e$ is a nonempty open set, and let $\mathcal F$ be a family of
exterior data $f\in H^s(\mathbb R^n)$ such that $\K_{f,\psi}$ is nonempty
for every $f\in\mathcal F$. 

Furthermore, suppose that $\mathcal{F}\subset H^s(\R^n)\setminus\{0\}$ or $\psi\not\leq 0$, the openness hypotheses in Proposition
\ref{prop:open-noncontact} hold for each $f\in\mathcal F$ for one of the two potentials, and either $s\geq 1/4$ or $q_1,q_2$ have continuous representatives in $\Omega$.

If the DN maps satisfy
\[
  \Lambda_{q_1,\psi}(f)|_W=\Lambda_{q_2,\psi}(f)|_W
  \qquad \forall f\in\mathcal F,
\]
then the sets $\{u_f>\psi\}$ are open and
\[
  q_1=q_2
  \quad\text{a.e. in}\quad
  \bigcup_{f\in\mathcal F}\{u_f>\psi\}.
\]
\end{corollary}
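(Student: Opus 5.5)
The plan is to apply Theorem~\ref{thm:one-state} separately to each datum $f\in\mathcal F$ and then pass to the union. Fix $f\in\mathcal F$ and let $u_f^{(1)},u_f^{(2)}$ denote the obstacle solutions for $q_1,q_2$. The hypotheses of Corollary~\ref{cor:union} are exactly those of Theorem~\ref{thm:one-state} for this single datum:
\begin{itemize}
\item $\K_{f,\psi}\neq\emptyset$;
\item the DN data agree on $W$;
\item the openness hypotheses of Proposition~\ref{prop:open-noncontact} hold for one of the two potentials.
\end{itemize}
Nontriviality also transfers. Either $f\neq0$ as an element of $H^s(\R^n)$, or $\psi\not\le0$.

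Theorem~\ref{thm:one-state} first gives $u_f^{(1)}=u_f^{(2)}=:u_f$ in $\R^n$, so the set $D_f=\{u_f>\psi\}$ is unambiguous. It then gives that $D_f$ is open by Proposition~\ref{prop:open-noncontact}, which handles the first assertion. Finally, in the case $s\ge1/4$ it gives $q_1=q_2$ a.e. in $D_f$.

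In the case $s<1/4$ we use the continuity of $q_1,q_2$ on $\Omega$, which restricts to continuity on the open set $D_f\subset\Omega$. Theorem~\ref{thm:one-state} then gives $q_1=q_2$ everywhere in $D_f$, hence a.e. there. One small point to check is the nontriviality hypothesis: in the theorem "$f\not\equiv0$" should be read as $f\neq0$ in $H^s$. The proof there only needs $u\not\equiv0$. If $f\neq0$ as an $H^s$ function, then either $f$ is nonzero on a positive-measure part of $\Omega_e$, in which case $u=f$ there, or $f\in\Htil(\Omega)$. In the latter case the equivalence class $H^s/\Htil(\Omega)$ is trivial, and nontriviality must come from $\psi\not\le0$. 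I would simply invoke the theorem as stated with the corollary's hypothesis.

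The main obstacle is passing from each $D_f$ to the union when $\mathcal F$ may be uncountable, because a union of uncountably many null exceptional sets need not be null. Openness resolves this. Set $U=\bigcup_{f}D_f$, which is open, and define $N_f=\{x\in D_f: q_1(x)\ne q_2(x)\}$ for fixed representatives of $q_1,q_2$; each $N_f$ is null. Since $\R^n$ is second countable, $U$ is a countable union of open balls with rational centers and radii, each contained in some $D_f$. Choose one such $f_k$ for each ball $B_k$. Then
\[
U=\bigcup_k B_k\subset\bigcup_k D_{f_k}\subset U,
\]
so $\{q_1\ne q_2\}\cap U\subset\bigcup_k N_{f_k}$, a countable union of null sets. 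Hence $q_1=q_2$ a.e. in $U$, which proves the corollary.
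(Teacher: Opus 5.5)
Your route is the paper's: apply Theorem~\ref{thm:one-state} to each $f\in\mathcal F$, then use second countability to replace $\bigcup_f D_f$ by a countable subunion. Your rational-ball argument is the paper's Lindel\"of step written out, and fixing representatives of $q_1,q_2$ is the right bookkeeping. The genuine problem is the nontriviality step, which you noticed and then waved through.

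Theorem~\ref{thm:one-state} really needs $u_f\not\equiv0$. Because $u_f-f\in\widetilde H^s(\Omega)$, the hypothesis ``$f\neq0$ in $H^s(\mathbb R^n)$'' does not give this. Your own case split shows the issue: when $f\in\widetilde H^s(\Omega)\setminus\{0\}$ the datum is trivial in $H^s(\mathbb R^n)/\widetilde H^s(\Omega)$, and the corollary's disjunctive hypothesis does not then force $\psi\not\le0$.

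This case actually fails. Take $f\in C_c^\infty(\Omega)\setminus\{0\}$, $0\le\phi\in C_c^\infty(\Omega)$ with $\phi\not\equiv0$, $\psi=-\phi$, and $\Psi=-\phi$. Then every hypothesis holds, including the openness hypotheses and $\mathcal F=\{f\}\subset H^s(\mathbb R^n)\setminus\{0\}$. Since $\K_{f,\psi}=\{v\in\widetilde H^s(\Omega): v\ge-\phi\}$ contains $0$ and $\mathcal B_q$ is coercive there, $u_f\equiv0$ for every $q\ge0$. Both DN maps therefore vanish on $W$. Yet $\{u_f>\psi\}=\{\phi>0\}$ is a nonempty open set on which, e.g., $q_1=0$ and $q_2=1$ differ.

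So ``invoking the theorem as stated'' does not close the gap. Reading $f\not\equiv0$ as $f\neq0$ in $H^s$ makes Theorem~\ref{thm:one-state} itself false. It must be read as $f\notin\widetilde H^s(\Omega)$, for instance $f\not\equiv0$ in $\Omega_e$, as in Lemma~\ref{lem:strong-positivity}. The paper's one-line proof has the same blind spot.

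With the hypothesis corrected, your argument and the paper's go through unchanged. The corrected hypothesis is: each $f\in\mathcal F$ satisfies $f\notin\widetilde H^s(\Omega)$, or $\psi(x_0)>0$ for some $x_0\in\Omega$. In the second case, continuity of $\psi$ gives $\psi>0$ on a ball, which has positive capacity, and $u_f\ge\psi$ q.e.\ there forces $u_f\neq0$.

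A minor point: your dichotomy ``nonzero on a positive-measure part of $\Omega_e$, or in $\widetilde H^s(\Omega)$'' is not exhaustive for rough $\Omega$. The dichotomy you need is $f\notin\widetilde H^s(\Omega)$ versus $f\in\widetilde H^s(\Omega)$.
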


\begin{proof}
Apply Theorem \ref{thm:one-state} to each $f\in\mathcal F$. This gives
$q_1=q_2$ a.e. in each open noncontact set
$D_f=\{u_f>\psi\}$.  The union
\[
  U=\bigcup_{f\in\mathcal F}D_f
\]
is open in $\mathbb R^n$.  Since Euclidean open sets are second countable and hence Lindelöf,
there is a countable subfamily $\{f_k\}_{k=1}^\infty\subset\mathcal F$ such
that
\[
  U=\bigcup_{k=1}^\infty D_{f_k}.
\]
The exceptional null sets on the countably many $D_{f_k}$ still have null
union, and hence $q_1=q_2$ a.e. in $U$.
\end{proof}

\begin{remark}[State reconstruction]
\label{rem:reconstruction}
The proof is constructive in the same sense as the single-measurement
fractional Calderón reconstruction.  Write $u=f+w$ with
$w\in\Htil(\Omega)$.  From the exterior measurement one knows
\[
  (-\Delta)^s w|_W
  =
  \Lambda_{q,\psi}(f)|_W-(-\Delta)^s f|_W.
\]
The operator $w\mapsto (-\Delta)^s w|_W$ is injective on $\Htil(\Omega)$ by
Theorem \ref{thm:antilocality}; the Tikhonov and spectral schemes in
\cite[Theorem 2 and Section 3]{GRSU-single} recover $w$, hence $u$.  After
this, in the noncontact set,
\[
  q=-\frac{(-\Delta)^s u}{u}
  \qquad\text{where }u\ne0.
\]
For $L^\infty$ potentials, Theorem
\ref{thm:meas-ucp} implies that
the zero set of a nontrivial state has measure zero, so the quotient recovers
$q$ a.e. For continuous potentials, the weak UCP in Theorem \ref{thm:one-state} shows that $\{u\ne0\}$ is dense in
the noncontact set; the quotient recovers $q$ on this dense set and
continuity determines it everywhere in the noncontact set.
\end{remark}

\section{A geometric coverage theorem}
\label{sec: geometric coverage}

The preceding result recovers $q$ only where at least one measured state lies
strictly above the obstacle $\psi$.  We now prove that, under a comparison principle,
a countable one-parameter family of exterior data exposes all of $\Omega$ up
to a null set.

\begin{lemma}[Weak maximum principle]
\label{lem:max}
Let $\Omega\subset\mathbb R^n$ be an open bounded set, $0<s<1$, and let
$q\in L^\infty(\Omega)$ satisfy $q\ge0$ a.e. in $\Omega$.  Let
$f\in H^s(\mathbb R^n)$ satisfy $f\ge0$ q.e. in $\R^n\setminus \Omega$. Let $h\in\Hs$
(weakly) solve
\[
  \begin{cases}
      ((-\Delta)^s+q)h=0\quad&\text{in }\Omega,\\
      h=f\quad&\text{in }\R^n\setminus\Omega.
  \end{cases}
\]
Then $h\ge0$ q.e. in $\R^n$.
\end{lemma}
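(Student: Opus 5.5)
We test the weak formulation with the negative part of $h$. Set $z\vcentcolon = h_-=(-h)_+$, where $h$ is identified with its quasicontinuous representative. By Lemma~\ref{lem:lattice}\ref{item: lipschitz map inhomogeneous}, applied with the Lipschitz map $T(t)=t_-$ (which satisfies $T(0)=0$), we have $z\in H^s(\mathbb R^n)$. Moreover $h-f\in\Htil(\Omega)$, so \cite[Theorem~10.1.1]{Adams-Hedberg} gives $h=f\ge0$ q.e. on $\mathbb R^n\setminus\Omega$. Hence $z=0$ q.e. on $\mathbb R^n\setminus\Omega$, and the same theorem yields $z\in\Htil(\Omega)$. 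This makes $z$ an admissible test function, so the weak equation gives
\[
  \Bq(h,z)=0 .
\]

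We now bound both parts of $\Bq(h,z)$ from above by $-\Bq(z,z)$. The energy part is handled by the second inequality in \eqref{eq: lattice energy inequalities} with $a=h$, which gives $\mathcal E_s(h,h_-)\le-\mathcal E_s(h_-,h_-)$. For the potential part, note that $h\,h_-=-h_-^2$ pointwise. Since $q\ge0$, this gives
\[
  \int_\Omega q\,h\,z\,dx=-\int_\Omega q z^2\,dx .
\]
Adding the two estimates yields
\[
  0=\Bq(h,z)\le-\mathcal E_s(z,z)-\int_\Omega qz^2\,dx=-\Bq(z,z).
\]

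Since $z\in\Htil(\Omega)$, the coercivity estimate \eqref{eq: coercivity estimate} forces $z=0$ in $H^s(\mathbb R^n)$, so $h_-=0$ a.e. The function $h_-$ is quasicontinuous, being a Lipschitz function of the quasicontinuous function $h$, and so is the zero function. Remark~\ref{rem: Uniqueness of quasicontinuous representations} then upgrades the a.e. statement to $h_-=0$ q.e., that is, $h\ge0$ q.e. in $\mathbb R^n$.

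The only point requiring care is showing that $h_-$ lies in $\Htil(\Omega)$ and is therefore a legitimate test function. This rests on the capacitary characterization \cite[Theorem~10.1.1]{Adams-Hedberg} together with the hypothesis that $f\ge0$ holds quasieverywhere, not merely almost everywhere, outside $\Omega$. The remaining steps are routine.
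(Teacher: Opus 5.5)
Your proof is correct and follows the paper's argument. You test with $h_-\in\Htil(\Omega)$, justified by the capacitary characterization, use the lattice inequality together with $hh_-=-h_-^2$ to get $0=\Bq(h,h_-)\le-\Bq(h_-,h_-)$, and conclude by coercivity. The differences are cosmetic: you apply the second inequality in \eqref{eq: lattice energy inequalities} directly, whereas the paper applies the first one to $-h$, and you spell out the a.e.-to-q.e. upgrade via uniqueness of quasicontinuous representatives, which the paper leaves implicit.
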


\begin{proof}
The negative part $h_{-}=\max\{-h,0\}$ belongs to $\Htil(\Omega)$ because the
exterior datum is nonnegative; equivalently, the quasicontinuous
representative of $h_{-}$ vanishes q.e.~in $\Omega_e$ (see \cite[Theorem~10.1.1]{Adams-Hedberg}). Testing the equation
with $h_{-}$ gives $\Bq(h,h_{-})=0$. By the lattice inequality \eqref{eq: lattice energy inequalities}, applied to $-h$,
\[
  \mathcal E_s(h,h_-)
  =
  -\mathcal E_s(-h,h_-)=-\mathcal E_s(-h,(-h)_+)
  \le
  -\mathcal E_s(h_-,h_-).
\]
Moreover, since $q\ge0$,
\[
  \int_\Omega qhh_-\,dx
  =
  -\int_\Omega qh_-^2\,dx.
\]
Therefore
\[
  0=\Bq(h,h_-)
  \le
  -\mathcal E_s(h_-,h_-)-\int_\Omega qh_-^2\,dx
  =
  -\Bq(h_-,h_-).
\]
Hence $\Bq(h_-,h_-)\le0$. Coercivity of $\Bq$ on $\Htil(\Omega)$ yields
$h_-=0$, and therefore $h\ge0$ q.e. in $\mathbb R^n$.
\end{proof}

\begin{lemma}[Strong positivity]
\label{lem:strong-positivity}
Let $\Omega\subset\mathbb R^n$ be an open bounded set, $0<s<1$, and let
$q\in L^\infty(\Omega)$ satisfy $q\ge0$ a.e.~in $\Omega$.  Let
$f\in H^s(\mathbb R^n)$ satisfy $f\ge0$ q.e. in $\Omega_e$ and
$f\not\equiv0$ in $\Omega_e$.  If $h\in H^s(\mathbb R^n)$ solves
\[
  \begin{cases}
      ((-\Delta)^s+q)h=0\quad&\text{in }\Omega,\\
      h=f\quad&\text{in }\R^n\setminus\Omega.
  \end{cases}
\]
then $h>0$ a.e. in $\Omega$.
\end{lemma}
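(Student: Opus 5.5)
By Lemma~\ref{lem:max} we already know $h\ge0$ q.e.\ in $\R^n$. The plan is to upgrade this to strict positivity a.e.\ in $\Omega$ in two steps. First we show $h\not\equiv 0$ on any open subset where it might degenerate. Then we invoke the strong maximum principle for nonlocal operators of Jarohs--Weth \cite{Jarohs-Weth-SMP}, which is stated for supersolutions of $(-\Delta)^s+c$ with $c\in L^\infty$ that are nonnegative in $\R^n$. Concretely, $h$ is a weak solution of $((-\Delta)^s+q)h=0$ in $\Omega$, $h\ge0$ in $\R^n$, and $h=f\not\equiv0$ in $\Omega_e$. The version of the strong maximum principle we would cite says: if such $h$ vanishes on a set of positive measure in a connected open set, then $h\equiv0$ in $\R^n$. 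Since $h|_{\Omega_e}=f\not\equiv0$, this would give the claim.

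If we prefer to prove this directly rather than rely on the precise form in \cite{Jarohs-Weth-SMP}, we would argue as follows. Suppose $E\subset\Omega$ has positive measure and $h=0$ a.e.\ on $E$. Pick a Lebesgue density point $x_0\in E\cap\Omega$ and a small ball $B=B_r(x_0)\Subset\Omega$. Take a nonnegative test function $\varphi\in C_c^\infty(B)$, or better a cutoff adapted to $E$, and insert it in the weak formulation
\[
  \mathcal E_s(h,\varphi)+\int_\Omega qh\varphi\,dx=0 .
\]
Split $\mathcal E_s(h,\varphi)$ into the local part on $B\times B$ and the nonlocal tail:
\[
  -c_{n,s}\int_B\varphi(x)\int_{\R^n\setminus B}\frac{h(y)}{|x-y|^{n+2s}}\,dy\,dx .
\]
The tail is strictly negative, since $h\ge0$ and $h$ is positive on a positive-measure subset of $\Omega_e$ (because $f\ge0$, $f\not\equiv0$). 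The local part and the potential term must then be controlled on the zero set $E$.

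The cleanest route is to first show that $h>0$ a.e.\ near the exterior, and then propagate. This is the main obstacle: measurable zero sets are not open, so a pointwise contradiction needs either continuity or a weak Harnack inequality. Our first attempt would be a weak Harnack inequality for nonnegative supersolutions of $(-\Delta)^s+q$, in the style of Di~Castro--Kuusi--Palatucci or Kassmann. It has the form
\[
  \inf_{B_{r/2}}h\ \ge\ c\,r^{2s}\int_{\R^n\setminus B_r}\frac{h(y)}{|y-x_0|^{n+2s}}\,dy\;-\;(\text{terms in }\|q\|_\infty r^{2s}\inf h),
\]
and the absorption works for small $r$. Since the tail integral is strictly positive for every ball $B_r\subset\Omega$ (because $h\ge0$ and $f\not\equiv0$ in $\Omega_e$), this gives $\operatorname{ess\,inf}_{B_{r/2}}h>0$ for every ball in $\Omega$. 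Covering $\Omega$ by countably many such balls then yields $h>0$ a.e.\ in $\Omega$. Note also that we need $q\ge0$ only to have $h\ge0$; the negative-sign potential term $-qh\le0$ helps, since $h$ is then a supersolution of $(-\Delta)^s h\ge -\|q\|_\infty h$, which is exactly the setting of \cite{Jarohs-Weth-SMP}. In practice we would cite \cite{Jarohs-Weth-SMP} for this step and only check its hypotheses: $h\in H^s(\R^n)$, $h\ge0$ in $\R^n$, $h$ a weak supersolution with bounded zero-order coefficient, and $h\not\equiv0$.
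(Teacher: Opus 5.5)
Your main argument is the paper's proof: $h\ge0$ from Lemma~\ref{lem:max}, then the Jarohs--Weth strong maximum principle \cite{Jarohs-Weth-SMP} for $(-\Delta)^s h=-qh$ with bounded zero-order coefficient gives the dichotomy $h>0$ a.e.\ in $\Omega$ or $h\equiv0$ in $\R^n$, and the second case is excluded because $h=f\not\equiv0$ in $\Omega_e$. The weak-Harnack detour is unnecessary, and as written its absorption step is not justified: the zero-order term naturally produces an error of size $\|q\|_\infty r^{2s}\sup_{B_r}h$, not one involving $\inf h$. Since you ultimately rely on \cite{Jarohs-Weth-SMP}, this does not affect the proof.
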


\begin{proof}
By Lemma \ref{lem:max}, $h\ge0$ q.e. in $\mathbb R^n$.  Since
\[
  (-\Delta)^s h=-qh\quad\text{in }\Omega,
\]
the strong maximum principle for nonlocal equations of the form
$Ih=c(x)h$, applied with $I=(-\Delta)^s$ and $c=-q\in L^\infty(\Omega)$,
implies that either $h>0$ a.e.~in $\Omega$ or $h\equiv0$ in
$\mathbb R^n$; see \cite[Theorem 1.1]{Jarohs-Weth-SMP}. The second
alternative is impossible because $h=f$ q.e.~in $\Omega_e$ and
$f\not\equiv0$ there. Note that in the case $s\geq 1/4$, the same conclusion also follows from the measurable UCP, Theorem~\ref{thm:meas-ucp}, instead of the strong maximum principle.
\end{proof}

\begin{lemma}[Obstacle state dominates the unconstrained state]
\label{lem:obstacle-dominates}
Let $\Omega\subset\mathbb R^n$ be an open bounded set, $0<s<1$, and let
$q\in L^\infty(\Omega)$ satisfy $q\ge0$ a.e.~in $\Omega$.  Let
$f_0\in H^s(\mathbb R^n)$, $\psi$ be quasicontinuous in $\Omega$, and
let $t>0$. Let $h\in H^s(\mathbb R^n)$ solve
\[
  \begin{cases}
      ((-\Delta)^s+q)h=0\quad&\text{in }\Omega,\\
      h=f_0\quad&\text{in }\R^n\setminus\Omega.
  \end{cases}
\]
Assume that $\K_{t f_0,\psi}$ is nonempty.  Let $u_t$ be the obstacle
solution with exterior datum $t f_0$, obstacle $\psi$, and potential $q$ (see Theorem~\ref{thm:obstacle-wp}).
Then
\[
  u_t\ge t h\quad \qe\text{ in }\Omega.
\]
\end{lemma}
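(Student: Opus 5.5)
We argue by a truncation/comparison in the spirit of Lemma~\ref{lem:max}, testing the variational inequality \eqref{eq: variational inequality} with a competitor obtained by lifting $u_t$ up to $th$. Note first that $th$ solves the unconstrained problem with exterior datum $tf_0$, i.e.\ $\Bq(th,\varphi)=0$ for all $\varphi\in\Htil(\Omega)$, and $th-tf_0\in\Htil(\Omega)$. Set
\[
  z\vcentcolon = (th-u_t)_+ .
\]
Since $th-u_t=(th-tf_0)-(u_t-tf_0)\in\Htil(\Omega)$, Lemma~\ref{lem:lattice}\ref{item: lipschitz map inhomogeneous} gives $z\in\Htil(\Omega)$ and $z\ge0$ q.e.

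The competitor will be $v\vcentcolon = u_t+z=\max\{u_t,th\}$. It satisfies $v-tf_0=(u_t-tf_0)+z\in\Htil(\Omega)$, and $v\ge u_t\ge\psi$ q.e.\ in $\Omega$ (working with quasicontinuous representatives, the pointwise maximum is quasicontinuous and the inequality holds q.e.). Hence $v\in\K_{tf_0,\psi}$, and \eqref{eq: variational inequality} yields $\Bq(u_t,z)\ge0$. Equivalently, by Lemma~\ref{lem:multiplier}\ref{item: multiplier positive}, $\langle\lambda,z\rangle\ge0$ with $\lambda$ the obstacle multiplier; either phrasing suffices.

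Subtracting the equation for $th$ tested with $z\in\Htil(\Omega)$ gives
\[
  \Bq(th-u_t,z)=\Bq(th,z)-\Bq(u_t,z)=-\Bq(u_t,z)\le0 .
\]
On the other hand, the lattice inequality \eqref{eq: lattice energy inequalities} applied to $a=th-u_t$ gives $\mathcal E_s(a,a_+)\ge\mathcal E_s(a_+,a_+)$, and since $q\ge0$ and $a\,a_+=a_+^2$, also $\int_\Omega q\,a\,a_+\,dx=\int_\Omega q\,a_+^2\,dx$. Thus
\[
  \Bq(z,z)\le\Bq(th-u_t,z)\le0,
\]
and coercivity \eqref{eq: coercivity estimate} on $\Htil(\Omega)$ forces $z=0$, i.e.\ $u_t\ge th$ q.e.\ in $\R^n$, in particular in $\Omega$. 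The final q.e.\ (rather than a.e.) statement follows from the uniqueness of quasicontinuous representatives (Remark~\ref{rem: Uniqueness of quasicontinuous representations}) applied to $z$.

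The only delicate point is the admissibility of $v=\max\{u_t,th\}$. It requires the capacitary calculus: membership in $\Htil(\Omega)$ via \cite[Theorem~10.1.1]{Adams-Hedberg} and Lemma~\ref{lem:lattice}, and the fact that the maximum of quasicontinuous representatives is the quasicontinuous representative of the maximum. Everything else is the same energy argument as in Lemma~\ref{lem:max}.
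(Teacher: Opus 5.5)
Your proposal is correct and follows essentially the same route as the paper's proof. Both test the variational inequality with $\max\{u_t,th\}=u_t+(th-u_t)_+$, subtract the weak equation for $th$, and use the lattice inequality \eqref{eq: lattice energy inequalities} with the identity $\int_\Omega q\,a\,a_+\,dx=\int_\Omega q\,a_+^2\,dx$ and coercivity to conclude $(th-u_t)_+=0$. The only deviation is minor: you get $z\in\Htil(\Omega)$ from $th-u_t\in\Htil(\Omega)$ and Lemma~\ref{lem:lattice}\ref{item: lipschitz map inhomogeneous}, whereas the paper uses q.e.\ vanishing outside $\Omega$ and \cite[Theorem~10.1.1]{Adams-Hedberg}.
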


\begin{proof}
Set $w=(th-u_t)_+\in H^s(\R^n)$.  Since $u_t=tf_0=th$ q.e.~in $\R^n\setminus\Omega$, we have
$w\in\Htil(\Omega)$.  The competitor $\max\{u_t,th\}=u_t+w\in H^s(\R^n)$ is admissible
because it lies above $u_t\ge\psi$ and has exterior value $t f_0$.  Hence the variational inequality \eqref{eq: variational inequality} yields
\[
  \Bq(u_t,w)\ge0.
\]
The linear equation for $th$ gives $\Bq(th,w)=0$, so
\[
  \Bq(th-u_t,w)\le0.
\]
By \eqref{eq: lattice energy inequalities} and the identity
\[
  \int_\Omega q(th-u_t)w\,dx=\int_\Omega qw^2\,dx,
\]
\[
  \Bq(th-u_t,w)\ge \Bq(w,w).
\]
Thus, coercivity of $\Bq$ on $\widetilde H^s(\Omega)$ gives $w=0$. This is the desired comparison.
\end{proof}

\begin{proposition}[Coverage by scaled positive data]
\label{prop:coverage}
Let $\Omega\subset\mathbb R^n$ be an open bounded set, $0<s<1$, and let
$q\in L^\infty(\Omega)$ satisfy $q\ge0$ a.e. in $\Omega$.  Let
$\psi$ be quasicontinuous in $\Omega$.  Let
$f_0\in\Hs$ satisfy $f_0\ge0$ q.e. in $\Omega_e$, be supported in
$\Omega_e$, and not be identically zero.  Assume
that, for every $t\in\mathbb Q_+$, the admissible set $\K_{t f_0,\psi}$ is
nonempty, and let $u_t$ be the corresponding obstacle solution.  If the
quasicontinuous representative of $\psi$ is finite a.e. in $\Omega$, then
\[
  \Omega
  =
  \bigcup_{t\in\mathbb Q_+}\{u_t>\psi\}
  \quad\text{up to a Lebesgue null set}.
\]
\end{proposition}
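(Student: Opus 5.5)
The plan is to combine the comparison Lemma~\ref{lem:obstacle-dominates} with strong positivity (Lemma~\ref{lem:strong-positivity}), following the heuristic from the introduction. First, let $h\in\Hs$ be the unconstrained solution of $((-\Delta)^s+q)h=0$ in $\Omega$ with $h=f_0$ in $\R^n\setminus\Omega$. It exists by Lax--Milgram, using the coercivity \eqref{eq: coercivity estimate} applied to $h-f_0\in\Htil(\Omega)$. Since $f_0\ge0$ q.e.\ in $\Omega_e$ and $f_0\not\equiv0$ there, Lemma~\ref{lem:strong-positivity} gives $h>0$ a.e.\ in $\Omega$. Here we use that $f_0$ is supported in $\Omega_e$, so nontriviality of $f_0$ means nontriviality in $\Omega_e$. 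We also fix quasicontinuous representatives of $h$, of each $u_t$, and of $\psi$.

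Second, for each $t\in\mathbb Q_+$, Lemma~\ref{lem:obstacle-dominates} yields an exceptional set $N_t\subset\Omega$ with $\Capa_s(N_t)=0$ such that $u_t\ge th$ on $\Omega\setminus N_t$. Let $N\vcentcolon=\bigcup_{t\in\mathbb Q_+}N_t$. Countable subadditivity (Theorem~\ref{thm: properties of cap}) gives $\Capa_s(N)=0$, and Lemma~\ref{lemma: comparison to Lebesgue measure} then gives $\mathcal L^n(N)=0$. Let $Z$ be the null set where $h\le0$ or where $h$ fails to agree with its representative, and let $P$ be the null set where $\psi=\pm\infty$. These are null by hypothesis; if $\psi=-\infty$ occurs, such points are trivially exposed anyway.

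Third, take $x\in\Omega\setminus(N\cup Z\cup P)$ that is not exposed, meaning $u_t(x)\le\psi(x)$ for all $t\in\mathbb Q_+$. Then $t\,h(x)\le u_t(x)\le\psi(x)<\infty$ for all rational $t$. Letting $t\to\infty$ forces $h(x)\le0$, which contradicts $x\notin Z$. Hence $\Omega\setminus\bigcup_t\{u_t>\psi\}\subset N\cup Z\cup P$, which is a Lebesgue null set. The reverse inclusion holds because each $\{u_t>\psi\}$ is taken inside $\Omega$. Measurability of the union follows from Corollary~\ref{cor: quasiopen}, but it is not really needed, since the complement is contained in a null set.

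The main obstacle is bookkeeping of representatives. The strong positivity statement is a.e., while the comparison and the sets $\{u_t>\psi\}$ are defined pointwise q.e. The countability of $\mathbb Q_+$ is exactly what lets the exceptional capacity-zero sets be unioned. Passing from capacity zero to Lebesgue null via Lemma~\ref{lemma: comparison to Lebesgue measure} reconciles the two notions.
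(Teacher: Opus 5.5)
Your proof is correct and follows the paper's argument. It uses strong positivity of the unconstrained solution $h$, the comparison $u_t\ge th$ q.e.\ from Lemma~\ref{lem:obstacle-dominates}, and countability of $\mathbb Q_+$ to collect the capacity-zero exceptional sets into a Lebesgue null set. The only cosmetic difference is that you argue pointwise via $\psi(x)<\infty$, whereas the paper argues by contradiction on a positive-measure set $A_m=A\cap\{|\psi|\le m\}$ with a uniform bound $m$.
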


\begin{proof}
Let $h\in H^s(\R^n)$ be the solution \[
  \begin{cases}
      ((-\Delta)^s+q)h=0\quad&\text{in }\Omega,\\
      h=f_0\quad&\text{in }\R^n\setminus\Omega.
  \end{cases}
\]
Then Lemma~\ref{lem:strong-positivity} gives $h>0$ a.e. in $\Omega$. Now suppose that the uncovered set
\[
  A \vcentcolon =\Omega\setminus\bigcup_{t\in\mathbb Q_+}\{u_t>\psi\}
\]
has positive measure. Since $\psi$ is finite a.e., there is $m>0$ such that
\[
    A_m \vcentcolon =A\cap\{|\psi|\le m\}
\]
has positive measure.  For every rational $t>0$,
one has $u_t\le\psi$ on $A_m$ outside a set of zero $H^s$-capacity, while
Lemma \ref{lem:obstacle-dominates} gives $u_t\ge th$ q.e. in $\Omega$.
Since the rational parameters are countable and sets of zero $H^s$-capacity
have zero Lebesgue measure (see Lemma~\ref{lemma: comparison to Lebesgue measure}),
\[
  t h\le m\quad\text{a.e. on }A_m
  \qquad \forall t\in\mathbb Q_+.
\]
Letting $t\to\infty$ through rational values yields $h=0$ a.e. on $A_m$,
contradicting $h>0$ a.e. in $\Omega$.
Hence $A$ is null.
\end{proof}

\begin{theorem}[Global uniqueness from a countable scaled family]
\label{thm:global}
Let $\Omega\subset\mathbb R^n$ be an open bounded set, $0<s<1$, and let
$q_1,q_2\in L^\infty(\Omega)$ satisfy $q_j\ge0$ a.e. in $\Omega$,
$j=1,2$.  Assume either that $s\in[1/4,1)$, so that Theorem
\ref{thm:meas-ucp} applies, or that $q_1,q_2$ have continuous representatives
in $\Omega$.  Let $W_1,W_2\subset\Omega_e$ be nonempty open sets, let
\[
  0\le f_0\in C_c^\infty(W_1)
\]
be not identically zero, and assume $\psi\in C_c^\infty(\Omega)$.
If
\[
  \Lambda_{q_1,\psi}(t f_0)|_{W_2}
  =
  \Lambda_{q_2,\psi}(t f_0)|_{W_2}
  \qquad\forall t\in\mathbb Q_+
\]
then
\[
  q_1=q_2\quad\text{a.e. in }\Omega.
\]
\end{theorem}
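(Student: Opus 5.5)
The plan is to combine Corollary~\ref{cor:union} with Proposition~\ref{prop:coverage}, taking the family $\mathcal F=\{tf_0:\ t\in\mathbb Q_+\}$ and $W=W_2$. The work consists of checking the hypotheses of these two results for the smooth data of the statement.

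\emph{Admissibility and nontriviality.} For each $t\in\mathbb Q_+$, define the compatible representative $\Psi_t$ to be $\psi$ in $\Omega$ and $tf_0$ in $\Omega_e$. Because $\psi\in C_c^\infty(\Omega)$ and $f_0\in C_c^\infty(W_1)$ with $W_1\subset\Omega_e$, the two pieces have disjoint compact supports, and $\Psi_t=\psi+tf_0\in C_c^\infty(\R^n)$. (We may shrink $W_1$ so that $\overline{\supp f_0}$ has positive distance to $\overline\Omega$; compactness of $\supp f_0\subset W_1\subset\Omega_e$ already ensures this.) Then $\Psi_t\in\K_{tf_0,\psi}$, so the admissible set is nonempty. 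The compatibility conditions \eqref{eq: representation of f and psi by common function} hold trivially. For \eqref{eq: schrödinger source }, note that $(-\Delta)^s\Psi_t$ and $(-\Delta)^s(tf_0)$ are smooth and bounded for Schwartz-class inputs, so $L_{q_j}(tf_0)$ and $L_{q_j}\Psi_t$ lie in $L^\infty(\Omega)$ for both $j$, since $q_j\in L^\infty$. Moreover $f\in H^s\cap L^\infty$ and $\psi\in C(\Omega)\cap L^\infty(\Omega)$. Hence Proposition~\ref{prop:open-noncontact} applies to each datum $tf_0$ and each potential, and the noncontact sets $D_t=\{u_t>\psi\}$ are open. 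Each $tf_0\not\equiv0$, so $\mathcal F\subset H^s(\R^n)\setminus\{0\}$. Finally, either $s\ge1/4$ or the potentials are continuous in $\Omega$, hence also in each $D_t$.

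\emph{Recovery on the union.} Corollary~\ref{cor:union} now gives $q_1=q_2$ a.e.\ in $U=\bigcup_{t\in\mathbb Q_+}D_t$. By Theorem~\ref{thm:one-state}, $u_t$ is the same for both potentials, so $U$ is unambiguous. In the continuous case the theorem gives equality everywhere on each $D_t$, which is stronger.

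\emph{Coverage.} Apply Proposition~\ref{prop:coverage} with $q=q_1$. Its hypotheses hold: $f_0\ge0$, $f_0$ is supported in $\Omega_e$ and nontrivial, $\K_{tf_0,\psi}\neq\emptyset$ for all rational $t$, and $\psi$ is smooth, hence its own quasicontinuous representative and finite everywhere. The proposition yields $|\Omega\setminus U|=0$, and together with the previous step we get $q_1=q_2$ a.e.\ in $\Omega$.

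The main delicate point is representatives. The coverage proposition uses the quasicontinuous representative of $u_t$, whereas openness uses the continuous representative from Proposition~\ref{prop:open-noncontact}. These agree q.e.\ in $\Omega$: a continuous function is quasicontinuous, and Remark~\ref{rem: Uniqueness of quasicontinuous representations} applies. So the two descriptions of $\{u_t>\psi\}$ differ only by a set of capacity zero, which is Lebesgue null by Lemma~\ref{lemma: comparison to Lebesgue measure}. A countable union of such discrepancies is still null, so the argument closes.
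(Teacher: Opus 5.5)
Your proposal is correct and follows the paper's proof: you verify the openness hypotheses through the smooth compatible representative $\Psi_t=\psi+tf_0$, recover $q_1=q_2$ on each noncontact set $\{u_t>\psi\}$ from Theorem~\ref{thm:one-state} (routing this through Corollary~\ref{cor:union} is the same argument, since $\mathbb Q_+$ is already countable), and conclude with Proposition~\ref{prop:coverage} applied to $q_1$. The differences are cosmetic: you use $\Psi_t$ itself as the admissible element where the paper uses $tf_0+M\chi$, and you spell out the matching of the continuous and quasicontinuous representatives of $u_t$, which the paper leaves implicit (since the continuous representative lives only on $\Omega$, the uniqueness of quasicontinuous representatives should be applied locally, e.g.\ to $\eta u_t$ with cutoffs $\eta\in C_c^\infty(\Omega)$).
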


\begin{proof}
The admissible sets are nonempty for all $t\in\mathbb Q_+$.  Indeed, choose
$\chi\in C_c^\infty(\Omega)$ with $\chi\ge1$ on $\supp\psi$ and
$\chi\ge0$.  For $M\ge\|\psi\|_{L^\infty(\Omega)}$, the function
$t f_0+M\chi$ has exterior value $t f_0$ and lies above $\psi$ in $\Omega$.

For each $t\in\mathbb Q_+$, Theorem \ref{thm:one-state}, using the
measurable-set branch when $s\in[1/4,1)$ and the continuous-potential branch
otherwise, shows that the two obstacle states coincide and that
$q_1=q_2$ a.e. in the common noncontact set $\{u_t>\psi\}$.  The additional
openness hypotheses in that theorem hold for $q_1$: since
$\psi\in C_c^\infty(\Omega)$ and
$f_0\in C_c^\infty(W_1)$, the function
\[
    \Psi_t:=\psi+t f_0\in C_c^{\infty}(\R^n)
\]
satisfies
$\Psi_t-tf_0=\psi\in\Htil(\Omega)$ and
$L_{q_1}\Psi_t\in L^\infty(\Omega)$.  Also
$L_{q_1}(t f_0)\in L^\infty(\Omega)$, because $f_0$ is smooth and compactly
supported in the exterior open set $W_1$. Thus Lemma \ref{lem:Lewy--Stampacchia} applies. Moreover, we used that by assumption $t f_0\not\equiv0$.

Now, Proposition
\ref{prop:coverage}, applied to $q_1$, says that the union of the
noncontact sets $\{u_t>\psi\}$, $t\in \mathbb Q_+$, covers $\Omega$ up to a null set. Hence $q_1=q_2$ a.e. in
$\Omega$.
\end{proof}

\begin{remark}[Relation with linearized obstacle methods]
\label{rem:linearized-strategy}
The proof of Theorem~\ref{thm:global} does not use differentiability of the
obstacle solution map. This is an important difference from the local
linearized free-boundary strategy, which we pursue elsewhere for the local
Schrödinger equation.  In that approach, one studies
derivatives of the obstacle solution with respect to exterior or obstacle
perturbations.  Formally, at a background solution $u_0$ with noncontact set
$D=\{u_0>\psi_0\}$ and contact set $K=\Omega\setminus D$, exterior variations
lead to solutions of
\[
  ((-\Delta)^s+q)w=0 \quad\text{in }D,
  \qquad
  w=0 \quad\text{on }K
\]
in the capacitary weak sense, while obstacle variations prescribe the
corresponding capacitary trace on $K$.

Making this linearized strategy rigorous requires additional assumptions
which are not needed in the proof of Theorem~\ref{thm:global}.  One needs,
for instance, a Mignot-type differentiability theorem for the fractional
obstacle variational inequality, a capacitary description of the tangent cone,
and a strict-complementarity or critical-cone collapse condition ensuring that
the linearized directions vanish on the contact set.  Under such hypotheses
one obtains a nonlocal analogue of the classical linearized free-boundary
argument: exterior perturbations solve a fractional Schrödinger equation on
the exposed quasiopen set $D$, whereas obstacle perturbations prescribe
capacitary data on the whole contact set $K$.

Thus the linearized picture remains conceptually useful, but it does not
strengthen the main uniqueness theorem proved here.  The advantage of the
present argument is precisely that it avoids these differentiability and
strict-complementarity hypotheses.
\end{remark}

	\medskip

    \noindent\textbf{Acknowledgments.} 
	\begin{itemize}
			\item G.~Uhlmann is partially supported by NSF.
			\item P.~Zimmermann is supported by the Swiss National Science Foundation (SNSF) under Grant Nos.~214500 and 239130.
		\end{itemize}

	\bibliographystyle{alpha}
	
	\bibliography{refs} 

@book{Adams-Hedberg,
  author    = {Adams, D. R. and Hedberg, L. I.},
  title     = {{Function Spaces and Potential Theory}},
  series    = {Grundlehren der Mathematischen Wissenschaften},
  volume    = {314},
  publisher = {Springer},
  address   = {Berlin},
  year      = {1996}
}

@article{calderon2006inverse,
  title={On an inverse boundary value problem},
  author={Calder{\'o}n, Alberto P},
  journal={Computational \& Applied Mathematics},
  volume={25},
  number={2-3},
  pages={133--138},
  year={2006},
  publisher={SciELO Brasil}
}

@book {evans-mt,
	AUTHOR = {Evans, Lawrence C. and Gariepy, Ronald F.},
	TITLE = {Measure theory and fine properties of functions},
	SERIES = {Textbooks in Mathematics},
	EDITION = {Revised},
	PUBLISHER = {CRC Press, Boca Raton, FL},
	YEAR = {2015},
	PAGES = {xiv+299},
	ISBN = {978-1-4822-4238-6},
	MRCLASS = {28-01},
	MRNUMBER = {3409135},
}

@article {RZ-unbounded,
	AUTHOR = {Railo, Jesse and Zimmermann, Philipp},
	TITLE = {Fractional {C}alder\'{o}n problems and {P}oincar\'{e}
	inequalities on unbounded domains},
	JOURNAL = {J. Spectr. Theory},
	FJOURNAL = {Journal of Spectral Theory},
	VOLUME = {13},
	YEAR = {2023},
	NUMBER = {1},
	PAGES = {63--131},
	ISSN = {1664-039X,1664-0403},
	MRCLASS = {35R30 (26A33 42B37 46F12)},
	MRNUMBER = {4620353},
	DOI = {10.4171/jst/444},
	URL = {https://doi.org/10.4171/jst/444},
}

@article{Silvestre-obstacle,
  author  = {Silvestre, L.},
  title   = {{The regularity of the obstacle problem for a fractional power of the Laplace operator}},
  journal = {Comm. Pure Appl. Math.},
  volume  = {60},
  number  = {1},
  pages   = {67--112},
  year    = {2007}
}

@book{fernandez2024integro,
  title     = {{Integro-Differential Elliptic Equations}},
  author    = {Fern{\'a}ndez-Real, Xavier and Ros-Oton, Xavier},
  publisher = {Springer},
  year      = {2024}
}

@phdthesis{Silvestre-thesis,
  author = {Silvestre, L. E.},
  title  = {{Regularity of the Obstacle Problem for a Fractional Power of the Laplace Operator}},
  school = {The University of Texas at Austin},
  year   = {2005}
}

@article{Silvestre-Holder,
  author  = {Silvestre, L.},
  title   = {{H{\"o}lder estimates for solutions of integro-differential equations like the fractional Laplace}},
  journal = {Indiana Univ. Math. J.},
  volume  = {55},
  number  = {3},
  pages   = {1155--1174},
  year    = {2006}
}

@article{DNPV-Hitchhiker,
  author  = {Di Nezza, E. and Palatucci, G. and Valdinoci, E.},
  title   = {{Hitchhiker's guide to the fractional Sobolev spaces}},
  journal = {Bull. Sci. Math.},
  volume  = {136},
  number  = {5},
  pages   = {521--573},
  year    = {2012}
}

@article{Fall-Felli-UCP,
  author  = {Fall, M. M. and Felli, V.},
  title   = {{Unique continuation property and local asymptotics of solutions to fractional elliptic equations}},
  journal = {Comm. Partial Differential Equations},
  volume  = {39},
  pages   = {354--397},
  year    = {2014}
}

@article{GSU-fractional,
  author  = {Ghosh, T. and Salo, M. and Uhlmann, G.},
  title   = {{The Calder{\'o}n problem for the fractional Schr{\"o}dinger equation}},
  journal = {Anal. PDE},
  volume  = {13},
  number  = {2},
  pages   = {455--475},
  year    = {2020}
}

@book {Variational-Methods,
	AUTHOR = {Struwe, Michael},
	TITLE = {Variational methods},
	SERIES = {Ergebnisse der Mathematik und ihrer Grenzgebiete. 3. Folge. A
	Series of Modern Surveys in Mathematics [Results in
	Mathematics and Related Areas. 3rd Series. A Series of Modern
	Surveys in Mathematics]},
	VOLUME = {34},
	EDITION = {Fourth},
	NOTE = {Applications to nonlinear partial differential equations and
	Hamiltonian systems},
	PUBLISHER = {Springer-Verlag, Berlin},
	YEAR = {2008},
	PAGES = {xx+302},
	ISBN = {978-3-540-74012-4},
	MRCLASS = {49-02 (34C25 35A15 35F20 37J45 47J30 49J10 58E05)},
	MRNUMBER = {2431434},
}

@misc{GRSU-single,
  author       = {Ghosh, T. and R{\"u}land, A. and Salo, M. and Uhlmann, G.},
  title        = {{Uniqueness and reconstruction for the fractional Calder{\'o}n problem with a single measurement}},
  howpublished = {preprint, arXiv:1801.04449},
  year         = {2018}
}

@article{Jarohs-Weth-SMP,
  author  = {Jarohs, S. and Weth, T.},
  title   = {{On the strong maximum principle for nonlocal operators}},
  journal = {Math. Z.},
  volume  = {293},
  pages   = {81--111},
  year    = {2019}
}

@misc{Kow-Kimura-LS,
  author       = {Kow, P.-Z. and Kimura, M.},
  title        = {{The Lewy-Stampacchia inequality for the fractional Laplacian and its application to anomalous unidirectional diffusion equations}},
  howpublished = {preprint, arXiv:1909.00588},
  year         = {2019}
}

@misc{Lo-Rodrigues-LS,
  author       = {Lo, C. W. K. and Rodrigues, J. F.},
  title        = {{On a class of nonlocal obstacle type problems related to the distributional Riesz fractional derivative}},
  howpublished = {preprint, arXiv:2101.06863},
  year         = {2021}
}

@article{ROS-Dirichlet,
  author  = {Ros-Oton, X. and Serra, J.},
  title   = {{The Dirichlet problem for the fractional Laplacian: regularity up to the boundary}},
  journal = {J. Math. Pures Appl.},
  volume  = {101},
  number  = {3},
  pages   = {275--302},
  year    = {2014}
}

@book{schaeferbanach,
  title     = {{Banach Lattices and Positive Operators}},
  author    = {Schaefer, Helmut H.},
  publisher = {Springer},
  address   = {Berlin},
  year      = {1974}
}

@incollection{mosco2006implicit,
  title={Implicit variational problems and quasi variational inequalities},
  author={Mosco, Umberto},
  booktitle={Nonlinear Operators and the Calculus of Variations: Summer School Held in Bruxelles 8--19 September 1975},
  pages={83--156},
  year={2006},
  publisher={Springer}
}

@article{Rodrigues-Teymurazyan-LS,
  title={On the two obstacles problem in Orlicz--Sobolev spaces and applications},
  author={Rodrigues, Jos{\'e} Francisco and Teymurazyan, Rafayel},
  journal={Complex Variables and Elliptic Equations},
  volume={56},
  number={7-9},
  pages={769--787},
  year={2011},
  publisher={Taylor \& Francis}
}

@article{KLW2022,
	AUTHOR = {Kow, Pu-Zhao and Lin, Yi-Hsuan and Wang, Jenn-Nan},
	TITLE = {The {C}alder\'{o}n problem for the fractional wave equation:
	uniqueness and optimal stability},
	JOURNAL = {SIAM J. Math. Anal.},
	FJOURNAL = {SIAM Journal on Mathematical Analysis},
	VOLUME = {54},
	YEAR = {2022},
	NUMBER = {3},
	PAGES = {3379--3419},
	ISSN = {0036-1410,1095-7154},
	MRCLASS = {35B35 (35R11 35R30)},
	MRNUMBER = {4434352},
	MRREVIEWER = {Wei\ Lian},
	DOI = {10.1137/21M1444941},
	URL = {https://doi.org/10.1137/21M1444941},
}

@article{zimmermann2024-viscous-wave,
title = {Calder\'on problem for nonlocal viscous wave equations: {U}nique determination of linear and nonlinear perturbations},
journal = {Revista de la Real Academia de Ciencias Exactas, Físicas y Naturales. Serie A. Matemáticas},
volume = {120},
NUMBER = {2},
pages = {31},
year = {2026},
doi = {https://doi.org/10.1007/s13398-025-01822-0},
url = {https://rdcu.be/eYhRi},
author = {Philipp Zimmermann}
}

@article{fu2026calderon,
  title={The Calder{\'o}n problem for third order nonlocal wave equations with time-dependent nonlinearities and potentials},
  author={Fu, Song-Ren and Yu, Yongyi and Zimmermann, Philipp},
  journal={Journal of Differential Equations},
  volume={463},
  pages={114164},
  year={2026},
  publisher={Elsevier}
}

@article {GSU20,
	AUTHOR = {Ghosh, Tuhin and Salo, Mikko and Uhlmann, Gunther},
	TITLE = {The {C}alder\'{o}n problem for the fractional {S}chr\"{o}dinger
	equation},
	JOURNAL = {Anal. PDE},
	FJOURNAL = {Analysis \& PDE},
	VOLUME = {13},
	YEAR = {2020},
	NUMBER = {2},
	PAGES = {455--475},
	ISSN = {2157-5045},
	MRCLASS = {35R11 (26A33 35J10 35J70 35R30)},
	MRNUMBER = {4078233},
	DOI = {10.2140/apde.2020.13.455},
	URL = {https://doi.org/10.2140/apde.2020.13.455},
}

@article{RZ2022LowReg,
	title={Low regularity theory for the inverse fractional conductivity problem},
	author={Railo, Jesse and Zimmermann, Philipp},
	journal={Nonlinear Analysis},
	volume={239},
	pages={113418},
	year={2024},
	publisher={Elsevier}
}

@article{RS17,
	AUTHOR = {R\"{u}land, Angkana and Salo, Mikko},
	TITLE = {The fractional {C}alder\'{o}n problem: low regularity and
	stability},
	JOURNAL = {Nonlinear Anal.},
	FJOURNAL = {Nonlinear Analysis. Theory, Methods \& Applications. An
	International Multidisciplinary Journal},
	VOLUME = {193},
	YEAR = {2020},
	PAGES = {111529, 56},
	ISSN = {0362-546X},
	MRCLASS = {35R30 (35B35 35B65 35R11 46E35)},
	MRNUMBER = {4062981},
	DOI = {10.1016/j.na.2019.05.010},
	URL = {https://doi.org/10.1016/j.na.2019.05.010},
}

@article{ghosh2021non,
	AUTHOR = {Ghosh, Tuhin},
	TITLE = {A non-local inverse problem with boundary response},
	JOURNAL = {Rev. Mat. Iberoam.},
	FJOURNAL = {Revista Matem\'{a}tica Iberoamericana},
	VOLUME = {38},
	YEAR = {2022},
	NUMBER = {6},
	PAGES = {2011--2032},
	ISSN = {0213-2230},
	MRCLASS = {35R30 (35R11)},
	MRNUMBER = {4516180},
	DOI = {10.4171/RMI/1323},
	URL = {https://doi.org/10.4171/RMI/1323},
}

@article{GU2021calder,
	title={The {C}alder\'{o}n problem for nonlocal operators},
	author={Ghosh, Tuhin and Uhlmann, Gunther},
	journal={arXiv:2110.09265},
	year={2021}
}

@article{zimmermann2023inverse,
	TITLE = {Inverse problem for a nonlocal diffuse optical tomography
	equation},
	author={Zimmermann, Philipp},
	JOURNAL = {Inverse Problems},
	FJOURNAL = {Inverse Problems. An International Journal on the Theory and
	Practice of Inverse Problems, Inverse Methods and Computerized
	Inversion of Data},
	VOLUME = {39},
	YEAR = {2023},
	NUMBER = {9},
	PAGES = {Paper No. 094001, 25},
	ISSN = {0266-5611,1361-6420},
	MRCLASS = {65M32},
	MRNUMBER = {4619274},
}

@article{cekic2020calderon,
	title={The {C}alder{\'o}n problem for the fractional {S}chr{\"o}dinger equation with drift},
	author={Cekic, Mihajlo and Lin, Yi-Hsuan and R{\"u}land, Angkana},
	journal={Cal. Var. Partial Differential Equations},
	volume={59},
	number={91},
	url                  = {https://link.springer.com/article/10.1007/s00526-020-01740-6},
	year={2020}
}

@article {ruland2018exponential,
	AUTHOR = {R\"{u}land, Angkana and Salo, Mikko},
	TITLE = {Exponential instability in the fractional {C}alder\'{o}n problem},
	JOURNAL = {Inverse Problems},
	FJOURNAL = {Inverse Problems. An International Journal on the Theory and
	Practice of Inverse Problems, Inverse Methods and Computerized
	Inversion of Data},
	VOLUME = {34},
	YEAR = {2018},
	NUMBER = {4},
	PAGES = {045003, 21},
	ISSN = {0266-5611},
	MRCLASS = {35R30 (35B35 35R11 65J22)},
	MRNUMBER = {3774704},
	DOI = {10.1088/1361-6420/aaac5a},
	URL = {https://doi.org/10.1088/1361-6420/aaac5a},
}

@article {GRSU18,
	AUTHOR = {Ghosh, Tuhin and R\"{u}land, Angkana and Salo, Mikko and Uhlmann,
	Gunther},
	TITLE = {Uniqueness and reconstruction for the fractional {C}alder\'{o}n
	problem with a single measurement},
	JOURNAL = {J. Funct. Anal.},
	FJOURNAL = {Journal of Functional Analysis},
	VOLUME = {279},
	YEAR = {2020},
	NUMBER = {1},
	PAGES = {108505, 42},
	ISSN = {0022-1236},
	MRCLASS = {35R11},
	MRNUMBER = {4083776},
	MRREVIEWER = {Vincenzo Ambrosio},
	DOI = {10.1016/j.jfa.2020.108505},
	URL = {https://doi.org/10.1016/j.jfa.2020.108505},
}

@article{feizmohammadi2021fractional_closed,
	title={Fractional {C}alder\'on' problem on a closed {R}iemannian manifold},
	author={Feizmohammadi, Ali},
	journal={arXiv preprint arXiv:2110.07500},
	year={2021}
}

@article{FKU24,
	title={Calder\'{o}n problem for fractional {S}chr\"{o}dinger operators on closed {R}iemannian manifolds},
	author={Feizmohammadi,Ali and Krupchyk, Katya and Uhlmann, Gunther},
	journal={arXiv preprint arXiv:2407.16866},
	year={2024}
}

@article{zimmermann2024optimalrungeapproximationdamped,
  title={Optimal Runge approximation for damped nonlocal wave equations and simultaneous determination results},
  author={Philipp Zimmermann},
  journal={Journal of Spectral Theory},
  year={2024},
  url={https://api.semanticscholar.org/CorpusID:274446033}
}

@article{feizmohammadi2024calder,
  title={Calder$\backslash$'$\{$o$\}$ n problem for fractional Schr$\backslash$"$\{$o$\}$ dinger operators on closed Riemannian manifolds},
  author={Feizmohammadi, Ali and Krupchyk, Katya and Uhlmann, Gunther},
  journal={arXiv preprint arXiv:2407.16866},
  year={2024}
}

@article{ruland2021single,
  title={On single measurement stability for the fractional Calder{\'o}n problem},
  author={Rüland, Angkana},
  journal={SIAM Journal on Mathematical Analysis},
  volume={53},
  number={5},
  pages={5094--5113},
  year={2021},
  publisher={SIAM}
}
	
\end{document}